\theoremstyle{plain}
\newtheorem{theorem}{Theorem}[section]
\newtheorem{lemma}[theorem]{Lemma}
\newtheorem{claim}[theorem]{Claim}
\theoremstyle{definition} 
\newtheorem{remark}[theorem]{Remark}
\def\@gifnextchar#1#2#3{\let\@tempe#1\def\@tempa{#2}\def\@tempb{#3}%
  \futurelet\@tempc\@gifnch}
\def\@gifnch{\ifx\@tempc\@sptoken\let\@tempd\@tempb%
  \else\ifx\@tempc\@tempe\let\@tempd\@tempa\else\let\@tempd\@tempb\fi\fi\@tempd}
\def\SK@set#1{\left\{#1\right\}}
\def\SK@@set#1#2{\{#1\,:\,
    \begin{array}{@{}l@{}}#2\end{array}
\}}
\def\SK@mset#1{\left\{\!\!\left\{#1\right\}\!\!\right\}}
\def\SK@@mset#1#2{\{\!\!\{#1\,:\,
    \begin{array}{@{}l@{}}#2\end{array}
\}\!\!\}}
\def\BIG@set#1{\Big\{#1\Big\}}
\def\BIG@@set#1#2{\Big\{#1\:\Big|\:
    \begin{array}{@{}l@{}}#2\end{array}
\Big\}}
\newcommand{\Set}[1]{\@gifnextchar\bgroup{\SK@@set{#1}}{\SK@set{#1}}}
\newcommand{\Mset}[1]{\@gifnextchar\bgroup{\SK@@mset{#1}}{\SK@mset{#1}}}
\newcommand{\Bigset}[1]{\@gifnextchar\bgroup{\BIG@@set{#1}}{\BIG@set{#1}}}
\newcommand{\1}{\mathbbm{1}}
\author{
Itai Benjamini\thanks{Weizmann Institute of Science, itai.benjamini@weizmann.ac.il},
Georgii Zakharov\thanks{The University of Oxford, zakharovg@maths.ox.ac.uk}
,
Maksim Zhukovskii\thanks{The University of Sheffield, m.zhukovskii@sheffield.ac.uk}
}
\date{}
\title{Majority dynamics on finite trees}
\begin{document}

\maketitle

\begin{abstract}
For an arbitrary finite tree $T$, we find the exact value of the wort-case stabilisation time of majority dynamics on $T$. We also prove that for a perfect rooted cubic tree $T$ with diameter $D$ and uniformly random initial opinions, the dynamics stabilises in time $\tau\in(D/4,D/3)$ with high probability.
\end{abstract}


\section{Introduction}


Majority dynamics is a fundamental process on networks where each individual interacts with its neighbours by changing its opinion on the opinion of the majority. This simply described and natural model has been of interest in
biophysics~\cite{McCulloch90}, social psychology~\cite{Yin19} and computer science~\cite{Alistarh15}. Formally, this process is described as follows. Initially each individual $i\in V$ considered as a vertex of a graph $G$ on a (not necessarily finite) set of vertices $V$ has an {\it initial opinion} $\xi_0(i)\in\{-1,1\}$. Then, at every time step $t\in\mathbb{N}$, they adopt the majority opinion of their neighbours, i.e. 
$$
\xi_t(i)=\mathrm{sign}\sum_{j\in N_G(i)}\xi_{t-1}(j),
$$
where $N_G(i)$ is the set of neighbours of $i$ in $G$. For convenience, we only consider graphs with odd degrees, so $\sum_{j\in N_G(i)}\xi_t(j)$ is never equal to 0.





It was proved by Goles and Olivos \cite{GolesOlivos} that the process stabilises for every finite graph $G$. More formally, the following {\it period two property} holds true: for every vertex $i$ and all large enough $t$, $\xi_{t+2}(i)=\xi_t(i)$.
However, the question on estimating the time until stabilisation 
$$
\tau(G;\xi_0)=\min\{t:\xi_{t+2}(i)=\xi_t(i)\text{ for all }i\in V(G)\}
$$ 
is still open. In~\cite{PoljakTurzik}, it is proven that, 
 for every $G$ and every $\xi_0\in\{-1,1\}^{V(G)}$, $\tau(G;\xi_0)\leq|E(G)|-\frac{1}{2}|V(G)|$. It is known however that for most graphs and most assignments of initial opinions, $\tau$ is much smaller: it was proven in~\cite{Fountoulakis} that if $p\gg n^{-1/2}$ then
 with high probability\footnote{With probability approaching 1 as $n\to\infty$. In what follows, we will write simply `whp' for brevity.} 
 $\tau:=\tau(G(n,p);\xi_0)\leq 4$ for $\xi_0\in\{-1,1\}^{V(G)}$ chosen uniformly at random. Here, as usual $G(n,p)$ denotes a binomial random graph on $[n]:=\{1,\ldots,n\}$ with every edge drawn independently with probability $p$. This was improved in~\cite{Chakraborti}: if $p\gg n^{-3/5}\log n$, then whp $\tau\leq 6$.  
In \cite{BenjaminiEtAl,TamuzTessler} the stabilisation phenomena was also studied for infinite graphs. It was proven in~\cite{BenjaminiEtAl} that for any unimodular transitive graph $G$ and any $\mathrm{Aut}(G)$-invariant distribution of initial opinions, the opinions of every vertex stabilise with probability 1, and the expected time until $\xi_t(i)$ stabilises is at most $2d$, where $d$ is the degree of $G$. Many natural classes of transitive infinite graphs are unimodular. In particular, the above is true for infinite regular trees $T$ and independent identically distributed $\xi_0(i)$, $i\in V(T)$. Our results imply a qualitative refinement of this result for regular trees of degree 3 (see Claim~\ref{cl:infinite_3-regular_tree} below). The exact value of $\max_{\xi_0}\tau(G=C_n;\xi_0)$ for a cycle was obtained in~\cite{AAMAS}. For further background on majority dynamics and related processes, see the survey~\cite{survey}.\\


In this paper, we study majority dynamics on {\it finite} trees. First of all, for a given tree $T$, we find the exact value of the worst-case stabilisation time
$$
\tau(T):=\max_{\xi_0\in\{-1,1\}^{V(T)}}\tau(T;\xi_0).
$$
Note that, due to~\cite{PoljakTurzik}, $\tau(T)\leq\frac{1}{2}|V(T)|-1$. As follows from the theorem below, this result is not best possible for any $T$ with $|V(T)|\geq 5$. 


\begin{theorem} Let all vertices of a tree $T$ have odd degrees and $|V(T)|\geq 5$.
Let $\mathcal{Q}$ be a set of paths $Q = v_1 \ldots v_{n}$ such that
\begin{itemize}
\item at most $\frac{1}{2}(\mathrm{deg}_Tv_n-1)$ of neighbours of $v_n$ in $T$ are leaves,
\item less than $\frac{1}{2}(\mathrm{deg}_Tv_i-1)$ of neighbours of $v_i$ in $T$ are leaves for $i \in [n-1]$.
\end{itemize}
For every $Q \in \mathcal{Q}$, let $t(Q)$ be defined as follows
$$
t(Q)= 
\begin{cases}
    |V(Q)| + 1, & \text{if } v_n \text{ is adjacent to a leaf}\\
    |V(Q)|,              & \text{otherwise.}
\end{cases}
$$
Then $\tau(T)=\max_{Q \in \mathcal{Q}}t(Q)$.
\label{th:1}
\end{theorem}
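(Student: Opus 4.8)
The plan is to prove the two inequalities $\tau(T)\le \max_{Q\in\mathcal{Q}}t(Q)$ and $\tau(T)\ge \max_{Q\in\mathcal{Q}}t(Q)$ separately, after two reductions that clarify the role of leaves. First, a \emph{leaf echo} observation: if $u$ is a leaf with unique neighbour $w$, then $\xi_t(u)=\xi_{t-1}(w)$ for all $t\ge1$, so $u$ becomes $2$-periodic exactly one step after $w$ does; this accounts for the $+1$ in the definition of $t(Q)$. Second, a \emph{pinning} lemma: if $v$ has degree $d$ and at least $\tfrac12(d+1)$ leaf neighbours, then for every $t\ge1$ these leaves echo $\xi_{t-1}(v)$ and, being more than half of the $d$ neighbours, form a strict majority, so $\xi_{t+1}(v)=\mathrm{sign}\sum_{w\sim v}\xi_t(w)=\xi_{t-1}(v)$; hence $v$ is $2$-periodic from time $0$ regardless of $\xi_0$. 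Thus no such `leaf-heavy' vertex — and these are \emph{precisely} the vertices excluded from every $Q\in\mathcal{Q}$ — ever contributes to the stabilisation time, which is the structural reason the extremal paths avoid them.

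For the upper bound I would track instability backwards in time. Call $u$ \emph{active at time $t$} if $\xi_{t+2}(u)\ne\xi_t(u)$. Comparing $\xi_{t+2}(u)=\mathrm{sign}\sum_{w\sim u}\xi_{t+1}(w)$ with $\xi_t(u)=\mathrm{sign}\sum_{w\sim u}\xi_{t-1}(w)$ shows that if $u$ is active at time $t\ge1$, then some neighbour $w$ has $\xi_{t+1}(w)\ne\xi_{t-1}(w)$, i.e. $w$ is active at time $t-1$. Iterating from a latest-active vertex yields a walk $u_0,u_1,\dots$ with $u_j$ active at the decreasing times and consecutive vertices adjacent. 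On a tree this should trace a simple path of length equal to the latest activity time; the boundary-versus-interior behaviour of the majority rule — an \emph{interior} active vertex must have strictly fewer than $\tfrac12(\mathrm{deg}_Tv_i-1)$ leaf neighbours to be driven by its two path-neighbours, whereas the \emph{terminal} one only needs at most $\tfrac12(\mathrm{deg}_Tv_n-1)$ — forces this path to lie in $\mathcal{Q}$, the leaf-echo bookkeeping supplying the $+1$ when the last active vertex is a pendant leaf of $v_n$.

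For the matching lower bound I would fix an extremal $Q=v_1\cdots v_n$ and engineer $\xi_0$ so that a single `domain wall' travels along $Q$ at unit speed. The idea is to freeze every side-subtree hanging off $Q$ to a constant opinion, which by the pinning lemma or a direct check stabilises essentially immediately, so that each $v_i$ then obeys a biased majority of its two path-neighbours and its pinned side-value. One chooses these side-values and the initial opinions on $Q$ so that the interface between the $+1$ and $-1$ regions advances by one vertex per step from $v_1$ towards $v_n$; the two clauses defining $\mathcal{Q}$ are exactly what guarantee that the wall is neither pinned in the interior nor reflected prematurely, and that $v_n$ performs its final flip at time $|V(Q)|$. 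A terminal leaf at $v_n$ then flips once more, at time $|V(Q)|+1$, by the leaf-echo relation, realising $t(Q)$.

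The main obstacle is the upper bound: converting the time-indexed activity walk into a genuine $\mathcal{Q}$-path of the \emph{full} length $\tau(T;\xi_0)-1$, since a priori the walk may backtrack or branch at high-degree vertices, and straightening it could shorten it below the required length. I expect to handle this by showing that from any active vertex one can always step to an active neighbour other than the one just visited — ruling out a backtrack $u_{j-1},u_j,u_{j-1}$ via the sign structure of the majority rule — so that a non-backtracking backward walk, hence a simple path of the right length, exists. Matching the strict interior threshold $\tfrac12(\mathrm{deg}_Tv_i-1)$ against the non-strict endpoint threshold, and reconciling the terminal-leaf $+1$ on both sides, is where the case analysis is most delicate.
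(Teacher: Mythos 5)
Your plan reproduces the paper's proof almost exactly: the leaf-echo relation, the trichotomy of vertices by number of leaf neighbours, the backward activity trace for the upper bound, and the frozen-side-subtree domain wall for the lower bound are precisely the paper's Claims~\ref{cl:balky_property}--\ref{cl:main_optimal}. Your anticipated fix for the main obstacle is also the paper's actual argument: by tracing flips with a \emph{common target opinion} $\xi$ (some neighbour must switch from $-\xi$ to $\xi$, because the count of $\xi$-opinions in the neighbourhood strictly increases), one gets $\xi_{t'+2}(v_{t'})=\xi\neq\xi_{t'+2}(v_{t'+2})$, so the walk never backtracks, and a non-backtracking walk in a tree is automatically a simple path of the full length --- no straightening or length loss occurs. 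One caution: your pinning lemma covers only vertices with at least $\frac12(\deg+1)$ leaf neighbours (these are indeed exactly the vertices appearing in no $Q\in\mathcal{Q}$), but it does not explain the \emph{strict} interior threshold. Vertices with exactly $\frac12(\deg_T v-1)$ leaf neighbours (``balky'' in the paper; admissible as $v_n$ but not as $v_1,\dots,v_{n-1}$) are not $0$-stationary, and excluding them from the interior requires a separate lemma (the paper's Claim~\ref{cl:balky_property}): since such a vertex's leaves echo its own previous opinion, it can flip only if \emph{every} non-leaf neighbour opposes it, while the sign-coherent trace guarantees the successor on the path still agrees at the relevant odd time. Your phrase ``driven by its two path-neighbours'' gestures at this, but it is the crux of the upper bound and must be stated and proved, not only for interior vertices but also to force the far endpoint $v_1$ (earliest flip, at time $2$) to be active.

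The one genuine error is an off-by-one in the lower bound. As you schedule it --- the wall leaving $v_1$ at time $1$, so that ``$v_n$ performs its final flip at time $|V(Q)|=n$'' --- you only get $\xi_n(v_n)\neq\xi_{n-2}(v_n)$, i.e.\ $v_n$ is not $(n-2)$-stable, which certifies $\tau\geq n-1=t(Q)-1$; likewise the terminal leaf's flip at time $n+1$ certifies $\tau\geq n$, one short of $t(Q)=n+1$. Since the theorem asserts an exact equality, this loses the result. The repair (and what the paper does) is to ignite the wall one level \emph{below} $Q$: assign opinion $-1$ to all descendants of $v_1$ at distance at least $2$, while $v_1$, its children, and the rest of $Q$'s closed neighbourhood start at $+1$ (with exactly $\frac12(\deg_T v_i-1)$ frozen $-1$ non-leaf side-children at each interior $v_i$, which exist by the strict threshold). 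Then the non-leaf children of $v_1$ --- a strict majority of its neighbourhood, again by the strict threshold --- flip at time $1$, $v_1$ flips at time $2$, $v_i$ at time $i+1$, so $v_n$ flips at time $n+1$ (not $(n-1)$-stable, giving $\tau\geq n$) and a pendant leaf at $v_n$ flips at time $n+2$ (not $n$-stable, giving $\tau\geq n+1$). This delay-by-one is entirely within your framework but must be built into the construction, not patched afterwards.
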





In particular, for {\it perfect} trees (in order to satisfy the condition that all degrees are odd, we assume here that a root has the same degree as all non-leaf vertices) with diameter $D=D(T)$ we have that $\tau(T) = D - 3$.\\

A natural question to ask, does $\tau$ drop significantly in the average case, i.e. when $\xi_0$ is chosen uniformly at random? We prove that for perfect binary trees it remains linear in $D$, though the constant factor becomes less than 1. Note that, as above, the root of a perfect binary tree has degree 3 (see the definition of a perfect $k$-ary tree in Section~\ref{sc:pre}). 

\begin{theorem} Let $\xi_0=\xi_0(T)\in\{-1,1\}^{V(T)}$ be chosen uniformly at random on vertices of a perfect binary tree $T=T(D)$ of diameter $D$. There exist $c_- >1/4$ and $c_+ <1/3$ such that whp, as $D\to\infty$, 
$$
c_- \cdot D < \tau(T; \xi_0) < c_+ \cdot D.
$$ 
\label{th:2}
\end{theorem}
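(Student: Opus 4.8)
The plan is to reduce the computation of $\tau(T;\xi_0)$ to a purely combinatorial quantity — the length of the longest \emph{active path} — and then to estimate this quantity for random opinions by the first and second moment methods. I first want to establish a refinement of Theorem~\ref{th:1}: that, up to an additive constant, $\tau(T;\xi_0)$ equals the maximal length of a path $v_1\ldots v_m$ in $T$ that remains \emph{unresolved}, in the sense that the local majority updates along it keep changing until the resolution travelling inward from its ends has crossed it. The point is that stabilisation propagates at speed one, so a vertex $v$ enters its eventual period-two pattern at a time roughly equal to the distance from $v$ to the nearest initial configuration that pins the pattern; hence $\tau$ is governed by the longest path avoiding such resolving configurations. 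Throughout I write $h=D/2$ for the height, so $|V(T)|=2^{(1+o(1))D/2}$.

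The heart of the reduction is a local analysis of how a perfect binary subtree hanging off the path influences it. I would first record the elementary facts that a leaf copies its parent with delay one and that a vertex adjacent to two leaves is period-two from the start; more generally I would show that each off-path subtree settles into a period-two state and then acts on its attachment vertex as a boundary field which is either \emph{frozen} (constant in time) or \emph{blinking} (period-two oscillating). A frozen field pins its vertex and lets the resolution pass through quickly, whereas a blinking field keeps the vertex undecided and transmits the activity along the path. Thus the path stays active at a given vertex precisely when a local event occurs, and I would compute the probability $p$ of this event from the fixed point of the recursion giving the chance that the root of a tall perfect binary subtree blinks. Since consecutive off-path subtrees are vertex-disjoint, these events are independent across the path up to a bounded-range dependence coming only from the shared path vertices.

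For the upper bound I would apply a union bound. The number of paths with $\ell$ edges in $T$ is at most $|V(T)|\cdot 2^{\ell}$, and each is active with probability at most $p^{\ell}$ by the near-independence above, so the expected number of active paths of length $\ell$ is $2^{(1+o(1))D/2}(2p)^{\ell}$. This tends to zero once $\ell>\tfrac{(1/2)D}{\log_2(1/(2p))}$, so whp no active path is longer than this, giving $\tau(T;\xi_0)<c_+\cdot D$ with $c_+=\tfrac{1}{2\log_2(1/(2p))}$; the requirement $c_+<1/3$ amounts to $2p<2^{-3/2}$, which I would verify for the computed $p$.

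For the lower bound I would run the second moment method on the number $X_\ell$ of active paths of length $\ell=\lceil c_-D\rceil$. The first moment is $\mathbb{E} X_\ell=2^{(1+o(1))D/2}(2p)^{\ell}\to\infty$ as long as $c_-<\tfrac{1}{2\log_2(1/(2p))}$, and the matching requirement $c_->1/4$ amounts to $2p>1/4$. The main work, and the main obstacle, is bounding $\mathbb{E} X_\ell^2$: two active paths interact only where they share vertices and where their off-path subtrees overlap, so I would split the second moment according to the size of the intersection and exploit the vertex-disjointness of the remaining off-path subtrees to factorise, showing $\mathbb{E} X_\ell^2=(1+o(1))(\mathbb{E} X_\ell)^2$. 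Besides this variance estimate, the two genuinely delicate points are proving the reduction lemma cleanly — in particular controlling the contribution of large off-path subtrees, whose own stabilisation is a priori not local and feeds back recursively into the active path — and verifying that the single constant $p$ produced by the local dynamics lies strictly inside the window $(1/8,\,2^{-5/2})$, which is exactly what makes both $c_->1/4$ and $c_+<1/3$ simultaneously achievable.
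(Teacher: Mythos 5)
Your reduction lemma is where the argument breaks, and it fails for two concrete reasons. First, an off-path subtree does \emph{not} act autonomously on its attachment vertex: the dynamics inside $T_{u}$ depend on the opinion of the attachment vertex $v$, which depends on the path, which depends on the other subtrees. So your ``frozen/blinking field'' events are not functions of $\xi_0|_{V(T_u)}$, and vertex-disjointness of the subtrees does \emph{not} give the independence your moment computations assume. The paper's entire technical apparatus exists to repair exactly this: it replaces such events by ones measurable with respect to the subtree's initial opinions alone --- \emph{strong} $t$-stability ($v$ is pinned whatever the outside does, used for the lower bound), \emph{weak} $t$-stability and ``1-close to stability'' (used for the upper bound, together with a careful decomposition $\mathcal{Q}^*(\tau)$ over canonical extensions and a surrogate measure $\mathbb{P}^*$) --- and only then invokes independence over disjoint subtrees. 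Second, your dichotomy mistakes what delays stabilisation: in this model $\tau$ is the time to reach \emph{period two}, so a ``blinking'' (period-two oscillating) boundary field does not keep a vertex undecided --- a vertex driven by period-two neighbours is itself stable almost immediately. What actually makes $\tau$ large is a travelling opinion whose first deviation time grows along a path, and arranging this requires \emph{timing} constraints (each side subtree must become stable with a prescribed opinion by a prescribed step, or must still be unstable so the opinion can pass), not a static frozen/blinking classification. Consequently there is no single fixed-point probability $p$ governing both bounds: the paper's lower bound runs at rate $p_{\mathrm{s}}(2)p_{\mathrm{s}}(3)>0.2501$ per two levels (and the whole claim $c_->1/4$ hangs on this product exceeding $1/4$ by a hair), while its upper bound runs at the unrelated rate $p_++p_-=0.0613$ per two steps; whether a single exponential rate exists is explicitly left open in the paper's discussion. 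Your plan to ``verify $p\in(1/8,2^{-5/2})$'' thus presupposes an object that the paper could not produce and defers the entire quantitative heart of the problem (the analogue of Claim~\ref{cl:binary_calculations} and the $p_1,p_2,p_3,q_3$ case analysis).

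Two smaller points. Your lower bound via second moments is a genuinely different route from the paper, which avoids variance estimates altogether: within one subtree $T_{R^*}$ it shows the travelling events $\mathcal{B}(P),\mathcal{B}(P')$ for distinct paths are \emph{disjoint} (so probabilities add, contributing the factor $2^{d-7}$), and then uses independence across the $\Theta(2^{h-d})$ disjoint subtrees $T_{R^*}$; if you insist on a second moment over all long paths in $T$, you must control pairs sharing long arms, where the correlations are strongest and your factorisation claim is unproven. Finally, your path count $|V(T)|\cdot 2^{\ell}$ overcounts: the number of paths of length $\ell$ in a perfect binary tree of height $h$ is $\Theta(2^{h+\ell/2})$ (the paper cites this), and your window $(1/8,2^{-5/2})$ for $p$ is an artefact of the loose count --- with the correct count the arithmetic connecting $p$ to $c_\pm$ changes. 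None of this is fatal to the outline's spirit (the ``opinion travels along a path'' observation is indeed the paper's starting point for both bounds), but as written the proposal has no valid mechanism for independence, no correct characterisation of what keeps a vertex unstable, and no source for the numerical estimates on which both inequalities $c_->1/4$ and $c_+<1/3$ rest.
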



For other $k$-ary trees, we are only able to prove an $\Omega(\sqrt{D})$ bound. 

\begin{theorem} There exists $c_->0$ such that, for every 
even $k\geq 4$ and a uniformly random $\xi_0\in\{-1,1\}^{V(T)}$ on vertices of a perfect $k$-ary tree $T$, whp
$$
c_- \cdot \frac{\sqrt{\ln k}}{k}\sqrt{D(T)} < \tau(T; \xi_0) \leq D(T)-3.
$$ 
\label{th:3}
\end{theorem}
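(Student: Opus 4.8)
The plan is to dispose of the upper bound in one line and to concentrate entirely on the lower bound, where the content lies. Since $T$ is a perfect $k$-ary tree with $k$ even, every internal vertex (including the root, under the stated degree convention) has the odd degree $k+1$ and every leaf has degree $1$, so Theorem~\ref{th:1} applies and gives $\tau(T)=D-3$. Because $\tau(T;\xi_0)\le\max_{\eta}\tau(T;\eta)=\tau(T)$ for the particular random assignment $\xi_0$, we obtain $\tau(T;\xi_0)\le D-3$ deterministically, which is the claimed upper bound.

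For the lower bound the idea is to exhibit, whp, a \emph{propagating signal} along a downward path. Fix a downward path $P=u_\ell u_{\ell-1}\cdots u_0$ with $u_\ell$ closest to the root, and let $h=\Theta(D)$ be the height. Each internal $u_i$ has, besides its two path-neighbours, $k-1$ \emph{side} children whose subtrees act as an external field on $u_i$. The first step is a \emph{relay lemma}: I would give explicit local conditions on the side fields under which $u_i$ is forced to copy the opinion of the adjacent path vertex one step later, so that a single flip seeded at one end of $P$ sweeps across $P$ at unit speed and makes the far endpoint change at time $\approx\ell$; this certifies $\tau(T;\xi_0)\ge\ell$. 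The delicate point already here is that the side subtrees are themselves evolving, so the lemma must guarantee that each $u_i$'s side field stays \emph{rigid} (anchored at the correct sign, independent of $u_i$) throughout the window in which the front passes $u_i$. I would prove this by an inductive coupling/monotonicity argument, reducing rigidity of a side subtree to a recursive internal-majority fixed-point condition bottoming out at the leaves.

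The second step is the probabilistic estimate. Writing $p_\ell$ for the probability that a fixed downward path of length $\ell$ is \emph{good} (all relays fire and all side subtrees remain rigid for the required durations), the key computation is that, because the window during which $u_i$ must stay rigid grows with $i$ and each rigid side subtree of depth $m$ costs a factor exponentially small in $m$, the per-path cost compounds to $p_\ell=\exp(-\Theta(k^2\ell^2))$ up to lower-order factors. The tree contains $e^{\Theta(h\ln k)}$ downward paths of length $\ell$, so a first-moment bound makes the expected number of good paths large precisely when $k^2\ell^2\lesssim h\ln k$, i.e. for $\ell$ up to $c_-\frac{\sqrt{\ln k}}{k}\sqrt{D}$. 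To upgrade positive expectation to existence whp I would run a second-moment (or branching/exploration) argument, exploiting that candidate paths rooted in disjoint subtrees give near-independent good events, so the count concentrates and at least one good path of the target length survives whp.

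I expect the main obstacle to be the relay/rigidity lemma together with the quadratic-in-$\ell$ bookkeeping: one must show that the evolving side subtrees neither prematurely destabilise nor fail to follow, and must track exactly how the required rigidity window—and hence the accumulated cost—grows along the path. This super-linear (effectively quadratic) cost is the whole reason the achievable length is only $\Theta(\sqrt{D})$ rather than the $\Theta(D)$ obtained for $k=2$ in Theorem~\ref{th:2}, where the much larger per-step balance probability lets a front propagate over a linear distance.
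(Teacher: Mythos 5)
Your upper bound coincides with the paper's: Theorem~\ref{th:1} applies (all degrees are odd), the active vertices of a perfect $k$-ary tree are exactly those at distance at least $2$ from the leaves, and the maximising path gives $\tau(T)=D-3$, so the bound holds deterministically. Your lower-bound skeleton also matches the paper's proof of Lemma~\ref{lm:k_ary_lower}: a flip travels along a path of length $d$ whose side children are forced to be rigid with prescribed signs (the paper's $(\leq t)$-stability, with exactly $k/2$ of the $k-1$ side children of each path vertex showing $-1$, so that the path neighbour tips the majority), the per-path cost is $2^{-\Theta(k^2 d^2)}$, and this is beaten by the $k^{\Theta(h)}$ candidate locations. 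Even your last step is close, though heavier than needed: the paper places all candidate roots $R^*$ at the same height $d$, so the subtrees $T_{R^*}$ are disjoint, the good events are exactly independent, and no second-moment argument is required.

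The genuine gap is the quantitative input you assume without a workable proof: that a side subtree can be made rigid with a prescribed sign over a window of length $t$ at cost only $\exp(-O(kt))$. This is precisely the paper's Claim~\ref{cl:stable_until_probability}, and it is the technical heart of the whole theorem. Your proposed route --- an ``inductive coupling/monotonicity argument, reducing rigidity of a side subtree to a recursive internal-majority fixed-point condition bottoming out at the leaves'' --- is essentially the approach the authors report does not work for $k\geq 4$: they state explicitly that they could not prove a positive constant lower bound on the probability of $t$-stability for general $k$, which is why $(\leq t)$-stability was introduced. The naive recursion, which pins a majority of children at each level, forces a pattern on the whole depth-$t$ ball and costs $2^{-\Theta(k^t)}$, doubly exponential in the window; with that cost the first moment only survives up to $d=\Theta(\log h/\log k)$, far short of $\Theta(\sqrt{h\ln k}/k)$. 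The paper's fix is a symmetrization: sibling subtrees are paired, and one demands only that at each relevant time \emph{at least one} member of each pair shows the right opinion; comparing $\hat\xi^{(w)}_0$ with the negation of $\hat\xi^{(u)}_0$ pulled back along an isomorphism shows that if one member deviates the mirrored configuration makes the other comply, and a Cauchy--Schwarz (QM-AM) step then gives each pair event probability at least $2^{-\lceil t/2\rceil}$, with independence across pairs, yielding the single-exponential bound $2^{-Ctk}$. Without this idea, or some substitute achieving cost $\exp(-O(kt))$ per side subtree, your bookkeeping $p_\ell=\exp(-\Theta(k^2\ell^2))$ is unsupported and the $\sqrt{D}$ lower bound does not follow.
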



\paragraph{Proof strategy.}

The main observation that we use to prove the results is that for a vertex $v$ of a tree, in order to change its opinion to $\xi\in\{-1,1\}$ at some time step $t\geq 2$, there should exist a path of $t-1$ vertices ending at $v$ such that vertices from the path change their opinion to $\xi$ one by one along the path. 
 In particular, in  Theorem~\ref{th:1} we construct and in Theorems~\ref{th:2}~and~\ref{th:3} we find such paths that let opinions pass through, in order to prove the lower bounds. 

In order to prove the upper bound in Theorem~\ref{th:1}, we will observe that only vertices $v$ that have less than $\frac{1}{2}(\deg_T(v)-1)$ leaf-neighbours (we will call them {\it{active}}) are able to convey an opinion to the next non-leaf vertex in a path with potentially good `conductivity' properties. So, $\tau(T)$ is approximately the length of the longest path of active vertices. The lower bound is obtained via an explicit assignment of opinions that allow to pass an opinion through the longest path of active vertices. 


For random initial opinions and a fixed path $P$ of active vertices, the main challenge is to extract independent events that allow to get tight lower and upper bounds on the probability that initial opinions of vertices allow some opinion to pass through the path. These independent events are naturally associated with opinions of vertices that belong to disjoint sets. We define them using notions of {\it weak} and {\it strong stability}. The proof of the lower bound in Theorem~\ref{th:2} relies on the notion of strong stability: a vertex $v$ is {\it strongly $t$-stable} if the initial opinions of the subtree $T_v$ induced by the descendants of $v$ guarantee $t$-stability of $v$ whatever the opinions of all the other vertices. We will prove that a vertex is strongly $t$-stable with at least a constant positive probability. It is worth noting that this immediately implies the following refinement of the result from~\cite{BenjaminiEtAl} in the particular case of infinite regular trees of degree 3:
\begin{claim}
Let $T$ be an infinite regular tree of degree 3 and let $\xi_0$ have uniform distribution on $\{-1,1\}^{V(T)}$. Fix a root $v\in V(T)$. Then $\mathbb{P}(\xi_{2t}(v)=\xi_0(v)\text{ for all $t\in\mathbb{N}$})>0.5$.
\label{cl:infinite_3-regular_tree}
\end{claim}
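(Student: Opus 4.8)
The plan is to root the tree at $v$, so that $v$ has three copies $T_1,T_2,T_3$ of the infinite binary tree hanging from its three neighbours $u_1,u_2,u_3$, and to exploit bipartiteness. Colour the vertices by the parity of their distance from $v$. The update rule then only couples even-coloured vertices at even times with odd-coloured vertices at odd times, the complementary ``phase'' evolving on its own. Consequently the whole trajectory $(\xi_{2t}(v))_{t\ge 0}$ is a deterministic function of the initial opinions on the even-coloured vertices alone, and the three subtree contributions are independent. By the symmetry $\xi_0\mapsto-\xi_0$ the probability in the claim equals $\mathbb{P}(\xi_{2t}(v)=+1\text{ for all }t\mid \xi_0(v)=+1)$, so it suffices to show that, conditionally on $\xi_0(v)=+1$, the value at $v$ survives at all even times with probability $>1/2$.

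The core is a pair of self-reinforcing stability events on the subtrees. For a neighbour-rooted binary subtree with root $w$, let $B(w)$ be the event (measurable with respect to the even-coloured initial opinions inside $T_w$) that, when the parent of $w$ is held at $+1$ at all even times, $w$ takes value $+1$ at all odd times; let $C(w)$ be the analogue with parities exchanged, which in addition forces $\xi_0(w)=+1$. Writing $c_1,c_2$ for the children of $w$, the update rule yields the deterministic implications $B(w)\supseteq C(c_1)\cup C(c_2)$ and $C(w)\supseteq\{\xi_0(w)=+1\}\cap(B(c_1)\cup B(c_2))$: one child that stays on at the required parity already supplies, together with the parental support, a majority for $w$. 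A short joint induction on $t$ — legitimate precisely because parity forbids any circular one-step dependence — upgrades these to the genuine dynamics, and the same induction shows at the root that $\{\xi_0(v)=+1\}$ together with $B(u_i)$ for at least two of the three neighbours forces $\xi_{2t}(v)=+1$ for all $t$.

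It remains to bound $b:=\mathbb{P}(B(w))$ from below. Since the two child-subtrees are disjoint and hence independent, the containments give $b\ge 2\gamma-\gamma^2$ and $\gamma\ge\tfrac12 b(2-b)$, where $\gamma:=\mathbb{P}(C(w))$. These recursions also admit the useless root $0$, so the real work is to place the true probabilities at the nontrivial fixed point. I will do this through the finite-horizon truncations $B^{(t)},C^{(t)}$, which decrease to $B,C$, starting from the exact value $\mathbb{P}(C^{(0)})=\mathbb{P}(\xi_0(w)=+1)=\tfrac12$ and iterating the increasing maps $g(\gamma)=2\gamma-\gamma^2$ and $h(\beta)=\tfrac12\beta(2-\beta)$. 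One checks that $g\circ h$ fixes a value $\beta^\ast\approx 0.705$, that $0$ is repelling (the composed map has slope $2$ near $0$) while $\beta^\ast$ is attracting, and that the iteration from $\tfrac12$ stays above the explicit threshold $0.7$; hence $\mathbb{P}(B^{(t)}(w))\ge 0.7$ for every $t$ and $b>\tfrac12$. Feeding this into the root estimate bounds the claim probability below by $\mathbb{P}(\text{at least two of three }B\text{'s})=3b^2-2b^3$, which is increasing in $b$ and equals exactly $\tfrac12$ at $b=\tfrac12$, hence is $>\tfrac12$.

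The main obstacle is exactly the temptation these recursions create: any attempt to certify that $v$ never flips by requiring a frozen or adversarially robust monochromatic structure below it collapses to probability $0$, since the relevant site process on the binary tree is critical (mean offspring $1$). What rescues the argument is the period-two mechanism encoded by the parity splitting — a child need only hold its value on the correct time-parity, being helped by its parent on the other — together with the fact that the resulting two-parity recursion has an attracting fixed point strictly above $1/2$. Getting the bookkeeping of the joint induction and of the finite-horizon monotonicity exactly right, so that the limit lands on $\beta^\ast$ rather than on $0$, is the delicate step; once it is in place the numerical slack between $0.705$ and $0.5$ is comfortable.
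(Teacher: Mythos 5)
Your proposal is correct, and it reaches the claim by a genuinely different route than the paper. The paper obtains Claim~\ref{cl:infinite_3-regular_tree} as a corollary of the strong-stability bound $p_{\mathrm{s}}(0)>0.5$ from Claim~\ref{cl:binary_calculations}: there, one finds in two distinct subtrees grandchildren of $v$ that are weakly $0$-stable with initial opinion $\xi_0(v)$ and invokes the path-freezing lemma (Claim~\ref{cl:weak_stabilization}), and the bound $p_{\mathrm{s}}(0)\geq\bigl(1-(1-p_{\mathrm{w}}(0)/2)^2\bigr)^2$ in turn rests on the rather delicate recursion $q_t\leq P_1(q_{t-2})+P_2(q_{t-4})P_3(q_{t-2})$ proving $p_{\mathrm{w}}(0)>0.925$. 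Your two-type scheme replaces all of this with the parity-alternating events $B$ (parent frozen at $+1$ on even times forces $+1$ on odd times) and $C$ (the mirror image, pinning $\xi_0(w)=+1$); note that your $C(w)$ is precisely weak $0$-stability with opinion $+1$ in the guise of the third equivalent definition in Claim~\ref{cl:equivalent_weak_definitions}, and your joint inductions are the same locality argument as in the proof of that claim and of Claim~\ref{cl:maintain_weak_value}. The delicate points all check out: the bipartite decoupling makes $B,C$ measurable with respect to disjoint vertex sets, so the product bounds $\beta\geq 2\gamma-\gamma^2$ and $\gamma\geq\tfrac12\beta(2-\beta)$ are legitimate; the finite-horizon truncations $B^{(t)},C^{(t)}$ decrease to $B,C$, the horizon bookkeeping ($B^{(t+1)}\supseteq C^{(t)}(c_1)\cup C^{(t)}(c_2)$, $C^{(t)}\supseteq\{\xi_0(w)=+1\}\cap(B^{(t)}(c_1)\cup B^{(t)}(c_2))$) is consistent, and starting from $\mathbb{P}(C^{(0)})=\tfrac12$ the iterates of $g\circ h$ decrease monotonically from $3/4$ to the attracting fixed point $\beta^*\in(0.70,0.705)$ (indeed $g(h(0.7))\approx 0.703>0.7$), which correctly sidesteps the repelling root at $0$ where the slope is $2$; finally, $B(u_1),B(u_2),B(u_3)$ and $\{\xi_0(v)=+1\}$ live on disjoint vertex sets, so the two-out-of-three bound $3b^2-2b^3>\tfrac12$ for $b>\tfrac12$ closes the argument. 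The trade-offs: your implied weak-stability bound is about $2\cdot 0.4565\approx 0.913$, weaker than the paper's $0.925$, but amply sufficient here, and your argument is self-contained on the infinite tree (no infimum over finite heights needed); conversely, the paper's heavier recursion is not wasted effort, since the sharper constant $p_{\mathrm{w}}(0)>0.925$ is what makes the numerics of the upper-bound Lemma~\ref{lm:binary_upper} go through, and your $2$-of-$3$ root combination in fact yields a better constant ($\approx 0.78$) than the paper's grandchildren-pair bound ($\approx 0.506$) for this particular claim.
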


Since the event that $v$ is $t$-stable is described in terms of initial opinions of vertices of $T_v$ only, we may define an event that 1) occurs when certain $O(d)$ vertices with non-overlapping pendant trees are strongly stable (and some other auxiliary technical properties of these vertices and associated trees hold, that we omit here for the sake of clarity); 2) implies that vertices along $P$ change their opinions one by one. This gives us that the probability that initial opinions of vertices allow some opinion to pass through $P$ is at least $\exp(-O(d))$. It then remains to find $\exp(\Omega(d))$ such paths so that the approximation events are independent.

For the upper bound in Theorem~\ref{th:2}, we introduce the notion of weak stability: in particular, $v$ is {\it weakly $0$-stable} if the initial opinions of the vertices of $T_v$ guarantee the existence of opinions of all the other vertices so that $v$ is $0$-stable (the notion of weak $t$-stability is more complex and thus we omit it here; the definition for an arbitrary $t$ is given in Section~\ref{sc:4:1}). As in the case of strong stability, the event that $v$ is weakly $0$-stable depends on initial opinions of vertices of $T_v$ only (nevertheless, the event that $v$ is weakly $t$-stable depends on opinions of vertices of $T_v$ at time $t$). Using this notion, we prove that the probability that vertices along $P$ change their opinion one by one is at most $\exp(-\Omega(d))$. Note that this fact is sufficient due to the union bound. The crucial observation that allows to prove this fact is that a weakly $t$-stable vertex is `half-stable' --- when two weakly $t$-stable vertices have the same opinion at time $t$ and are connected by a path consisting of vertices with the same opinion at time $t$, they stabilise immediately.  
Our intuition here is that, roughly speaking, there are typically many weakly $t$-stable vertices in the tree. Therefore, they frequently stabilise, and once they stabilise, an opinion cannot travel through them, preventing changes of any opinion after some fairly long time. A little more formally, we show that the fact that an opinion travels through some path $P$ implies linearly in $|P|$ many equations of the form ``$\xi_t(v) = \xi$'' for vertices $v$ from the neighbourhood of $P$. On the other hand, we describe a linear set of independent events that contradict to these equations (we use Claim~\ref{cl:weak_stabilization} and Claim~\ref{cl:one_far_stabilization} to show that). Then, we show that $\mathbb{P}(v \in V \text{ is weakly 0-stable})$ and $\mathbb{P}(v\in V \text{ is weakly stable after the first change of its opinion})$ are bounded away from zero which, in turn, implies an exponential bound for the probability that neither from the set of contradicting events happens. This immediately implies the desired exponential upper bound on the probability the travel of some opinion along $P$ does not happen.

In the proof of the lower bound of Theorem~\ref{th:3} we introduce a new notion of $(\leq t)$-stability. The proof is similar to the proof of Theorem~\ref{th:2}. Roughly speaking, in the proof, $(\leq t)$-stability plays the same role as strong $t$-stability in Theorem~\ref{th:2}. 
 We replace $t$-stability by $(\leq t)$-stability because, in contrast to the perfect binary tree, for an arbitrary perfect $k$-ary tree we did not manage to prove a positive constant lower bound on the probability of $t$-stability. However, our bound on the probability of $(\leq t)$-stability for an arbitrary perfect $k$-ary tree is worse than the bound on strong $t$-stability for a perfect binary tree, which results in a weaker lower bound.\\ 

This paper is organised as follows. In Section~\ref{sc:pre}, we introduce all the necessary definitions including the properties of stability. Further, in Section~\ref{sc:3} we prove Theorem~\ref{th:1}. The average-case bounds are proven in the next two sections: Section~\ref{sc:4} is devoted to the proof of Theorem~\ref{th:2} and Theorem~\ref{th:3} is proven in Section~\ref{sc:5}. Finally, in Section~\ref{sc:dis} we discuss our results as well as further questions and avenues for the future research.

\section{Preliminaries}
\label{sc:pre}

Let $T$ be a tree and $R$ be its vertex. The pair $(T,R)$ is called a {\it rooted tree}, and $R$ is called the {\it root}. For brevity, we will denote $(T,R)$ simply by $T$ and say that $T$ is {\it $R$-rooted} or {\it rooted in $R$}. We only consider trees with odd degrees of vertices. A vertex $u$ is a {\it descendant} of a vertex $v$ in $T$, if the path between $u$ and $R$ contains $v$. A descendant of $v$ which is also its neighbour is called a {\it child} of $v$. For every vertex $v$ of $T$, we denote by $T_v$ the subtree of $T$ induced by all descendants of $v$ and $v$ itself rooted in $v$. The {\it height} of $T$ is the maximum distance between its root and its leaf. Let us denote the height of $T_v$ by $h_T(v)$ and call it the {\it height of $v$}.

We call a rooted tree $T$ {\it $k$-ary}, if all vertices other than leaves have $k+1$ neighbours: for consistency with the definition of the majority dynamics, we let the root of a $k$-ary tree to have exactly the same degree as other non-leaf vertices. We call a $k$-ary tree {\it perfect}, if all its leaves are at the same distance from the root. 
A (perfect) 2-ary tree is also called a {\it (perfect) binary tree}. In particular, the root of a (perfect) binary tree has degree 3.\\



Let us now run the majority dynamics on a rooted tree $T$ with an arbitrary initial vector of opinions $\xi_0$. Let $t\in\mathbb{Z}_{\geq 0}$. Let us call a vertex $u$ {\it $t$-stable}, if for every $s\geq t$ with the same parity as $t$, $\xi_{s}(u)=\xi_{s+2}(u)$ (here we assume that $0$ is even). The vertex is {\it $t$-stationary}, if it is $t$-stable and $(t+1)$-stable.




\section{Proof of Theorem~\ref{th:1}}
\label{sc:3}


In Section~\ref{sc:th1_proof_upper}, we prove the upper bound $\tau(T)\leq\max_{Q\in\mathcal{Q}}t(Q)$. In Section~\ref{sc:th1_proof_lower} we constructively prove the lower bound $\tau(T)\geq\max_{Q\in\mathcal{Q}}t(Q)$ by assigning the initial opinions in a proper way.

\subsection{Upper bound}
\label{sc:th1_proof_upper}

Here we prove the upper bound in Theorem~\ref{th:1}. We distinguish between three types of vertices. We call a vertex $v$ of $T$ {\it active}, if less then $\frac12(\mathrm{deg}_Tv - 1)$ of its neighbours are leaves. We call $v$ {\it balky}, if exactly  $\frac12(\mathrm{deg}_Tv - 1)$ of its neighbours are leaves. In particular, all leaves are balky. All the other vertices are {\it passive}. Let us observe the following crucial property of balky vertices.

\begin{claim}
If $v$ is balky, $u$ is a non-leaf neighbour of $v$, and, for some $s\in\mathbb{Z}_{\geq 0}$, $\xi_s(v)=\xi_{s+1}(u)$, then $\xi_{s+2}(v)=\xi_{s}(v)$.
\label{cl:balky_property}
\end{claim}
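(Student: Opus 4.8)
The plan is to expand the majority update rule for $v$ at time $s+2$ and exploit two structural facts: a leaf always echoes the previous opinion of its unique neighbour, and the balky condition pins down the exact split of $N_T(v)$ into leaf- and non-leaf neighbours. The whole argument is a short counting of votes in the neighbourhood of $v$.

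First I would set $\ell := \frac{1}{2}(\deg_T v - 1)$ and note that, since all degrees in $T$ are odd, $\ell$ is an integer; the balky hypothesis then says that $v$ has exactly $\ell$ leaf-neighbours and hence $\deg_T v - \ell = \ell + 1$ non-leaf neighbours. In particular $\ell+1\geq 1$, so the non-leaf neighbour $u$ of the statement exists. Next I would record the leaf-echo fact: every leaf $w$ adjacent to $v$ has $v$ as its only neighbour, so $\xi_{s+1}(w)=\mathrm{sign}\,\xi_s(v)=\xi_s(v)$. Consequently each of the $\ell$ leaf-neighbours of $v$ contributes exactly $\xi_s(v)$ to the time-$(s+1)$ neighbour sum of $v$.

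Then I would put the pieces together. Splitting $N_T(v)$ into the $\ell$ leaves, the distinguished neighbour $u$, and the remaining $\ell$ non-leaf neighbours, the hypothesis $\xi_{s+1}(u)=\xi_s(v)$ together with the leaf-echo fact gives
$$
\sum_{w\in N_T(v)}\xi_{s+1}(w)=(\ell+1)\,\xi_s(v)+\sum_{\substack{w\in N_T(v)\\ w\neq u,\ w\text{ non-leaf}}}\xi_{s+1}(w),
$$
where the last sum ranges over exactly $\ell$ terms, each of absolute value $1$. Since $\ell+1>\ell$, the remaining terms can shift the total by at most $\ell$ in magnitude, so the whole sum has the same sign as $\xi_s(v)$. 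Hence $\xi_{s+2}(v)=\mathrm{sign}\sum_{w\in N_T(v)}\xi_{s+1}(w)=\xi_s(v)$, which is the claim.

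The main obstacle is purely a matter of careful bookkeeping rather than any analytic difficulty: one must verify that the balky count yields $\ell+1$ votes guaranteed to agree with $\xi_s(v)$ (the $\ell$ leaves plus $u$) against at most $\ell$ possibly-opposing votes, producing a strict majority. The oddness of $\deg_T v$ is exactly what rules out a tie and makes the strict inequality $\ell+1>\ell$ decisive, so no additional case analysis is needed.
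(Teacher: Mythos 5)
Your proof is correct and takes essentially the same route as the paper's: both identify the $\ell=\frac{1}{2}(\deg_T v-1)$ leaf-neighbours as echoing $\xi_s(v)$ at time $s+1$, add the hypothesis on $u$, and conclude that $\ell+1$ out of $2\ell+1$ neighbours share $\xi_s(v)$, a strict majority. Your version merely makes the vote count explicit where the paper phrases it as ``more than half of neighbours share the same opinion,'' so there is nothing to fix.
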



\begin{proof}
Indeed, let $u_1, \ldots, u_m$, $m={\frac12(\mathrm{deg}_Tv - 1)}$, be all the leaves adjacent to $v$. Then, $\xi_{s}(v) = \xi_{s+1}(u_1) = \ldots = \xi_{s+1}(u_m)$ and $\xi_{s+1}(u)$ is the same by the assumption of the claim. As $u$ is not a leaf, we get that more than half of neighbours of $v$ share the same opinion $\xi_{s}(v)$ at time $s+1$. Thus, $\xi_{s+2}(v) = \xi_{s}(v)$.
\end{proof}

Now, note that all passive vertices are $0$-stationary, so there is nothing to prove for them. Claim~\ref{cl:active} formulated in the end of this section states that every active vertex $u$ stabilises in time no more then the number of vertices in the longest path $Q$ of active vertices starting in $u$. In particular, it matches the upper bound in the statement of Theorem~\ref{th:1} since $Q\in\mathcal{Q}$. Due to Claim~\ref{cl:balky_property}, a balky vertex $v$ may $\it{switch}$ at time $s\geq 2$ (i.e. $\xi_s(v) \neq \xi_{s-2}(v)$) only if $\xi_{s-1}(u) = \xi_s(v)$ for every non-leaf $u \in N_T(v)$. Therefore, $v$ cannot cause a change of an opinion of its non-leaf neighbour. This means that, as soon as all active neighbours of $v$ stabilise, then non-leaf balky vertices stabilise in at most one additional step. Moreover, a balky non-leaf vertex can change its value only if it has an active neighbour, while a leaf can switch at time $s\geq 3$ if its neighbour is either active or balky. Therefore, for a balky vertex $v$ with at least one active neighbour the stabilisation time is at most 1 bigger than the number of vertices in the longest path of active vertices starting at an active neighbour of $v$. Again, we shall note that this time is at most the upper bound stated in Theorem~\ref{th:1}: for any path $Q$ initiated at a balky vertex with all the other vertices being active we have that $Q\in\mathcal{Q}$. Finally, at most one additional step is needed for leaves adjacent to balky vertices. Any path $v_{n+1} v_n v_{n-1}\ldots v_1$ with leaf $v_{n+1}$, balky vertex $v_n$, and active vertices $v_1,\ldots,v_{n-1}$ has $t(Q)=|V(Q)|+1$ vertices for $Q=v_1\ldots v_n\in\mathcal{Q}$. Thus, leaves stabilise in time $\max_{Q\in\mathcal{Q}}t(Q)$ as well.\\

It remains to prove that, if any path consisting of active vertices initiated at an active vertex $v$ has at most $t$ vertices, then $v$ is $t$-stable. Indeed, this would also imply that $v$ is $(t+1)$-stable and, therefore, $t$-stationary by applying the same assertion to the process with the vector of initial opinions $\xi_1$. The following claim completes the proof of the upper bound in Theorem~\ref{th:1}.

\begin{claim} Let all vertices of a tree $T$ have odd degrees, and let $v$ be an active vertex of $T$. Then $v$ is $t$-stable, where $t$ is the number of vertices in the longest path of active vertices starting in $v$.
\label{cl:active}
\end{claim}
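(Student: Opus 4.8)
The plan is to prove the contrapositive in the form of a ``light cone'' argument: I will show that whenever the active vertex $v$ \emph{switches} at some time $s\ge 2$ (meaning $\xi_s(v)\neq\xi_{s-2}(v)$, with new value $\xi:=\xi_s(v)$), there is a simple path of \emph{active} vertices $v=u_0,u_1,\dots,u_{s-2}$ starting at $v$, where each $u_i$ switches to $\xi$ at time $s-i$. Granting this, the longest active path from $v$ has at least $s-1$ vertices, so $s\le t+1$; hence $v$ never switches at a time $\ge t+2$, which is exactly $t$-stability (in fact this gives $t$-stationarity, since it forbids switches of both parities beyond $t+1$).

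The engine is a one-step backward lemma: if a vertex $w$ has $\xi_{s'}(w)=\xi$ and $\xi_{s'-2}(w)=-\xi$ with $s'\ge 3$, then some neighbour $u$ satisfies $\xi_{s'-1}(u)=\xi$ and $\xi_{s'-3}(u)=-\xi$. This follows because $\mathrm{sign}\sum_{u\in N_T(w)}\xi_{s'-1}(u)=\xi$ while $\mathrm{sign}\sum_{u\in N_T(w)}\xi_{s'-3}(u)=-\xi$, so the two sums have opposite sign and at least one neighbour must have moved from $-\xi$ to $\xi$ between times $s'-3$ and $s'-1$. Iterating this from $u_0=v$ down to time $2$ produces the vertices $u_0,\dots,u_{s-2}$, which is where the initial switch at $v$ needs $s\ge t+2\ge 3$ for the iteration to start.

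It remains to argue these vertices are active and distinct, which I expect to be the crux. First, no $u_i$ is a leaf: a leaf neighbour $\ell$ of a vertex $w$ switching to $\xi$ at time $s'$ has $\xi_{s'-1}(\ell)=\xi_{s'-2}(w)=-\xi$, so it is never among the neighbours that moved to $\xi$, and hence the lemma always lands on a non-leaf. Second, no $u_i$ is passive: a passive $w$ has a strict majority of leaf neighbours, all of which carry $\xi_{s'-1}=\xi_{s'-2}(w)$, forcing $\xi_{s'}(w)=\xi_{s'-2}(w)$, so passive vertices never switch. Third --- and here Claim~\ref{cl:balky_property} enters --- no $u_i$ with $i\ge1$ is balky: its predecessor $u_{i-1}$ is a non-leaf neighbour that switched to $\xi$ at time $s-i+1$, so $\xi_{s-i-1}(u_{i-1})=-\xi$; but by the contrapositive of Claim~\ref{cl:balky_property}, a balky vertex switching at time $s-i$ would force \emph{every} non-leaf neighbour, in particular $u_{i-1}$, to have opinion $\xi$ at time $s-i-1$, a contradiction. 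Hence every $u_i$ is active.

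The same observation yields distinctness: the walk $u_0u_1\cdots$ cannot backtrack, since $u_{i+1}=u_{i-1}$ would require $u_{i-1}$ to switch to $\xi$ at both times $s-i+1$ and $s-i-1$, which is impossible because the former already forces $\xi_{s-i-1}(u_{i-1})=-\xi$. A non-backtracking walk in a tree is a simple path, so $u_0,\dots,u_{s-2}$ is a genuine path of $s-1$ active vertices starting at $v$, completing the argument. The main obstacle is precisely this bookkeeping that simultaneously rules out leaf, passive, and balky vertices along the traced path and rules out backtracking; once these are handled, the length bound $t\ge s-1$ and the resulting $t$-stability are immediate.
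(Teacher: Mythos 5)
Your proof is correct and takes essentially the same route as the paper's: both trace the switch backwards in time via the comparison of the neighbour-opinion sums at consecutive even times to produce a path of successively switching vertices, observe that passive vertices never switch, and invoke Claim~\ref{cl:balky_property} to rule out balky vertices on that path. You merely phrase it as a direct light-cone bound rather than a contradiction, and you spell out the non-backtracking/simplicity and leaf-exclusion bookkeeping that the paper treats more tersely.
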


\begin{proof}
Suppose towards a contradiction that there is a time step $t_1 \geq t$ such that $\xi := \xi_{t_1+2}(v) \neq \xi_{t_1}(v)$. Then the multisets of opinions of neighbours of $v$ at time steps $t-1$ and $t+1$ should differ. In particular, there is a neighbour $v_{t_1-1}$ of $v$ such that $\xi = \xi_{t_1+1}(v_{t_1-1}) \neq \xi_{t_1-1}(v_{t_1-1})$. We can apply the same reasoning to $v_{t_1-1}$ at time step $t_1-1$, so we find a neighbour $v_{t_1-2}$ of $v_{t_1-1}$ such that $\xi = \xi_{t_1}(v_{t_1-2}) \neq \xi_{t_1-2}(v_{t_1-2})$. In addition $v_{t_1-2} \neq v$ as their opinions differ at time step $t_1$. Applying the same reasoning $t_1-2$ times more, we find a path $v=v_{t_1}v_{t_1-1}\ldots v_0$ such that, for every $t' \in \{0, \ldots, t_1\}$, $\xi = \xi_{t'+2}(v_{t'}) \neq \xi_{t'}(v_{t'})$. Since $t_1+1>t$, the length of this path is longer than the maximum length of a path of active vertices initiated at $v$. Thus, there exists $t' \in \{0, \ldots, t_1-1\}$ such that $v_{t'}$ is not active. Then $v_{t'}$ is either balky or passive. Actually, it cannot be passive, since $\xi_{t'+2}(v_{t'}) \neq \xi_{t'}(v_{t'})$ while a passive vertex is 0-stationary. So it should be balky. However, $\xi_{t'}(v_{t'}) = \xi_{t'+1}(v_{t'+1}) \neq \xi$ and $v_{t'+1}$ is not a leaf --- either it is adjacent to different vertices $v_{t'+2}$ and $v_{t'}$ or it coincides with $v$ which is active and, thus, cannot be a leaf. So, by Claim~\ref{cl:balky_property}, $\xi_{t'+2}(v_{t'}) = \xi_{t'}(v_{t'})$ --- a contradiction, completing the proof.
\end{proof}

\subsection{Lower bound}
\label{sc:th1_proof_lower}

Consider a path $Q = v_1\ldots v_n\in\mathcal{Q}$ maximising $t(Q)$. 
 To prove the lower bound in Theorem~\ref{th:1}, it is sufficient to present $\xi_0 : V(T) \rightarrow \{-1, 1\}$ such that $v_n$ changes its opinion at time $n+1$. Indeed, if $t(Q) = n$, then $v_n$ is not $(t(Q) - 1)$-stable. If $t(Q) = n + 1$, then $v_n$ is adjacent to a leaf that, in turn, changes its opinion at time $n+2$. 
  Thus, this leaf is not $(t(Q) - 1)$-stable, concluding the proof. 
   It remains to prove the following claim.





\begin{claim}
Let $Q = v_1 \ldots v_n\in\mathcal{Q}$. 
Then there is $\xi_0: V(T)\to\{-1,1\}$ such that
\begin{itemize}
\item for every $i \in [n]$ holds $\xi_{i+1}(v_i) = -1$,
\item for every $i \in [n]$ and $t \leq i$ holds $\xi_t(v_i) = 1$.
\end{itemize}
\label{cl:main_optimal}
\end{claim}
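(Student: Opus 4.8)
The plan is to exhibit $\xi_0$ for which a single ``wave'' of opinion $-1$ travels along $Q$, each $v_i$ keeping opinion $+1$ up to time $i$ and switching to $-1$ at time $i+1$; the two displayed properties are then proved simultaneously by induction on $i$. Set $\xi_0(v_i)=+1$ for all $i$, and decompose $T$ into the path $Q$ together with the pendant subtrees hanging off the $v_i$'s (every vertex of $T$ lies on $Q$ or in exactly one such subtree, since $T$ is a tree). For each non-path, non-leaf neighbour $w$ of a vertex $v_i$ --- a \emph{branch neighbour} --- let $S_w$ be the subtree on the far side of the edge $v_iw$.

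First I would prove a \emph{constant-source lemma}: if all of $S_w$ is initialised to a fixed value $c\in\{-1,1\}$, then $\xi_t(x)=c$ for every $x\in S_w$ and all $t\ge 0$, \emph{regardless} of the opinions of $v_i$. The point is that every $x\in S_w\setminus\{w\}$ has all its neighbours inside $S_w$, while $w$ itself has $\deg_T(w)-1\ge\tfrac12(\deg_T(w)+1)$ of its neighbours in $S_w$ (using $\deg_T(w)\ge 3$, as $w$ is not a leaf), so these $S_w$-neighbours form a strict majority and override $v_i$. A one-line induction on $t$ then keeps all of $S_w$ equal to $c$. Consequently each branch neighbour can be used as a tunable constant $\pm 1$ source feeding $v_i$, and the only other neighbours of $v_i$ are its $\ell_i$ leaves and its path-neighbours.

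Next comes the propagation count, the heart of the argument. At a time $t\le i$ the leaf-neighbours of $v_i$ echo $\xi_{t-1}(v_i)=+1$ (set them to $+1$ at time $0$), the successor $v_{i+1}$ is still $+1$, and the predecessor $v_{i-1}$ is $+1$ until time $i-1$ and becomes $-1$ exactly at time $i$ by the induction hypothesis. Writing $d_i=\deg_T(v_i)$, I would choose the numbers of constant $+1$ and $-1$ branch neighbours so that, just before $v_{i-1}$ flips, $v_i$ sees a bare $+1$ majority, and the single flip of $v_{i-1}$ tips it to a bare $-1$ majority; this forces $\xi_t(v_i)=+1$ for $t\le i$ and $\xi_{i+1}(v_i)=-1$. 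The required number of constant $+1$ branch neighbours works out to $\tfrac{d_i-3}{2}-\ell_i$ for an internal $v_i$ (and $\tfrac{d_i-1}{2}-\ell_i$ at the endpoint $v_n$, where there is no successor), and its non-negativity is \emph{exactly} the defining inequality of $\mathcal Q$: $\ell_i<\tfrac12(d_i-1)$ for $i<n$, respectively $\ell_i\le\tfrac12(d_i-1)$ for $i=n$. That the remaining branch slots suffice follows from $d_i\ge 3$.

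The main obstacle is the base case $v_1$, which has no predecessor to trigger it and whose leaves echo $+1$, so they cannot by themselves produce the flip. Here I would reserve one branch neighbour $w^\ast$ to play the role of a ``virtual predecessor'': give $w^\ast$ a constant $-1$ pendant subtree but set $\xi_0(w^\ast)=+1$, so that by the constant-source lemma applied to the children of $w^\ast$ the vertex $w^\ast$ is $+1$ at time $0$ and $-1$ for all $t\ge 1$. With $w^\ast$ flipping at time $1$, the same bare-majority bookkeeping gives $\xi_1(v_1)=+1$ and $\xi_2(v_1)=-1$ (the feasibility condition now being $\ell_1\le\tfrac{d_1-3}{2}$, the active condition at $v_1$, with $d_1\ge 3$ guaranteeing a spare branch neighbour to serve as $w^\ast$), starting the wave. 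Since the pendant subtrees of distinct path vertices are disjoint, all these local assignments are independent and together define $\xi_0$ on all of $V(T)$, completing the induction.
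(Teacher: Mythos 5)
Your construction is correct and essentially the paper's own: constant pendant subtrees as immovable sources (your constant-source lemma is exactly the paper's observation that a child whose whole subtree is uniformly initialised is $0$-stationary), precisely $\tfrac{1}{2}(\deg_T v_i-1)$ negative sources at each path vertex so that the single flip of $v_{i-1}$ tips a bare majority, leaves echoing $+1$, and a delayed trigger at $v_1$ (the paper flips \emph{all} non-leaf children of $v_1$ at time $1$ by giving their deeper descendants opinion $-1$, where you reserve a single $w^\ast$; the arithmetic is identical, and your feasibility counts match the defining inequalities of $\mathcal{Q}$ just as in the paper). The one phrase to tighten is ``induction on $i$'': since $\xi_t(v_i)=1$ for $t\le i$ uses $\xi_{t-1}(v_{i+1})=1$, the argument must be organised as an induction over time $t$ — as the paper does, ruling out premature flips of $v_\tau$, $\tau\ge t+1$, by noting that with your bare-majority design a flip can only be caused by a path neighbour turning $-1$ — which your bookkeeping supports verbatim.
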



\begin{proof}

Let $v_n$ be the root of $T$. Let us set $\xi_0(v_i) = 1$ for all $i\in[n]$ and for every child $u$ of $v_1$ set $\xi_0(u) = 1$. For every $i\in[n]\setminus\{1\}$ and every leaf $u$ adjacent to $v_i$, we set $\xi_0(u)=1$. By the definition of $\mathcal{Q}$, every vertex $v_i$, $i\in[n]\setminus\{1\}$, has at least $\frac{1}{2}(\mathrm{deg}_Tv_i-1)$ children other then leaves and $v_{i-1}$. We then set $\xi_0(u)=-1$ for exactly $\frac{1}{2}(\mathrm{deg}_Tv_i-1)$ non-leaf children $u$ of $v_i$ other than $v_{i-1}$. All the other children of every $v_i$, $i\in[n]\setminus\{1\}$, get $\xi_0=1$. It only remains to assign initial opinions to vertices that are not in the (closed) $1$-neighbourhood of $Q$. For every $i\in[n]\setminus\{1\}$ and every non-leaf child $u$ of $v_i$ we assign the opinion $\xi_0(u)$ to every vertex of $T_u$. Lastly, every descendant of $v_1$ at distance at least 2 from $v_1$ gets initial opinion $-1$.


First of all, note that $\xi_1(v_1) = \xi_1(v_2) = \ldots = \xi_1(v_n) = 1$ by the definition of $\xi_0$. Next, note that, for every $i \in [n]\setminus\{1\}$, every non-leaf child of $v_i$ is 0-stationary by the definition of $\xi_0$. In addition, every leaf-child of $v_i$ has opinion $1$ at time step $1$. So, $\xi_2(v_2) = \ldots = \xi_2(v_n) = 1$. However, every non-leaf child of $v_1$ has opinion $-1$ at time step 1. So, $\xi_2(v_1) = -1$, as $v_1$ is active vertex.

It is sufficient for us to prove by induction that, for every $t\in[n]$, at time $t$ we have the following: $\xi_{t+1}(v_t) = -1$ while $\xi_{t+1}(v_{t+1}) = \ldots = \xi_{t+1}(v_n) = 1$. The base of induction for $t=1$ is already verified. We suppose that this is true for some $t\in[n-1]$. Let $s> t+1$ be the first moment when a certain $v_{\tau}$, $\tau\geq t+1$, changes its opinion (we let $s=\infty$ if all $v_{\tau}$, $\tau\geq t+1$, are 0-stationary). Then, $s-1$ was the first time step when more than a half of the neighbours of $v_{\tau}$ had opinion $-1$. Note that all children of $v_{\tau}$ other than $v_{\tau-1}$ that are not leaves are 0-stationary. Also, all children $u$ of $v_{\tau}$ that are leaves should have had $\xi_{s-1}(u)=1$ since otherwise $\xi_{s-2}(v_{\tau}) = -1$, which contradicts the induction hypothesis. Thus, $s-1$ was the first time step when at least one of the neighbours of $v_{\tau}$ from $Q$, namely $v_{\tau-1}$ or $v_{\tau+1}$, had opinion $-1$. By the induction hypothesis, it might only be possible if $s=t+2$ and $\tau=t+1$. Hence, $\xi_{t+2}(v_{t'})=1$ for all $t' \geq t+2$, completing the proof.

\end{proof}

\section{Perfect binary trees: proof of Theorem~\ref{th:2}}
\label{sc:4}

In this section, we prove Theorem~\ref{th:2}. It asserts both the lower and the upper bound for the typical stabilisation time on a perfect binary tree. We prove these two bounds separately and, for convenience, state them as two lemmas in what follows. Before proceeding with their formulations and proofs, we introduce concepts of weak and strong stability. 

The entire proof is organised as follows. In Section~\ref{sc:4:1}, we define weak and strong stability and discuss their basic properties. In particular, in Claim~\ref{cl:binary_calculations} we state lower bounds on probabilities of weak and strong stability for early time steps. In Section~\ref{sc:4:2}, we prove the lower bound from Theorem~\ref{th:2}, and in Section~\ref{sc:4:3}, we prove the upper bound. Finally, in Section~\ref{sc:4:4}, we prove Claim~\ref{cl:binary_calculations}.

\subsection{Weak and strong stability}
\label{sc:4:1}



For every positive integer $h$, we fix a perfect binary tree $T^{(h)}$ of depth $h$ rooted in a vertex $R$. When the height is clear from the context we omit the superscript and write simply $T$. For a uniformly random $\xi_0$ on $V(T)$, we let $\tau_T:=\tau(T;\xi_0)$.

Let us now fix some $h\in\mathbb{Z}_{>0}$, $t\in\mathbb{Z}_{\geq 0}$, a vertex $v\in V(T=T^{(h)})$, $v \neq R$, and a vector of initial opinions $\xi_0:\, V(T)\to\{-1,1\}$. If $v$ is not a leaf, then we call $v$ {\it strongly $t$-stable} (w.r.t. $\xi_0$), if it is $t$-stable with the same opinion (i.e. $\xi_t(v) = \tilde\xi_t(v)$) for {\it any} other vector of initial opinions $\tilde\xi_0:\, V(T)\to\{-1,1\}$ such that $\tilde\xi_0|_{V(T_v)} = \xi_0|_{V(T_v)}$. 
 Similarly, $v$ (whether it is a leaf or not) is {\it weakly $t$-stable} (w.r.t. $\xi_t$), if $v$ is $0$-stable for {\it some} other vector of initial opinions $\tilde\xi_0:\, V(T)\to\{-1,1\}$ such that $\tilde\xi_0|_{V(T_v)} = \xi_t|_{V(T_v)}$.


\begin{claim}
Let $u \in V(T)$ be a parent of a vertex $v \in V(T)$ and $t \in \mathbb{Z}_{\geq0}$. The following three statements are equivalent.

\begin{enumerate}
    \item The vertex $v$ is weakly $t$-stable.
    \item For the vector $\tilde\xi^*_0:\, V(T)\to\{-1,1\}$ such that $\tilde\xi^*_0|_{V(T_v)} = \xi_t|_{V(T_v)}$ and $\tilde\xi^*_0(\{V(T)\setminus V(T_v)\}) = \{\xi_t(v)\}$, the vertex $v$ is $0$-stable w.r.t. $\tilde\xi^*_0$.
    \item For every odd $k > 0$ and $\hat\xi_0:\, V(T)\to\{-1,1\}$ such that $\hat\xi_0|_{V(T_v)} = \xi_t|_{V(T_v)}$ and $\hat\xi_{j}(u) = \hat\xi_{0}(v)$ for every odd $j \in \{1, 3, \ldots, k\}$, it holds that $\hat\xi_{k+1}(v) = \hat\xi_{0}(v)$.
\end{enumerate}
\label{cl:equivalent_weak_definitions}
\end{claim}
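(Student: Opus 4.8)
The plan is to prove the three items equivalent through the cycle $1\Rightarrow 2\Rightarrow 3\Rightarrow 1$, driven by two tools: the monotonicity of majority dynamics and a ``freezing'' observation about $\tilde\xi^*_0$. Write $\xi:=\xi_t(v)$; since all three statements fix the opinions on $V(T_v)$ to $\xi_t|_{V(T_v)}$, we have $\tilde\xi^*_0(v)=\hat\xi_0(v)=\xi$, and $v$ being $0$-stable with respect to such a vector means exactly that the opinion of $v$ equals $\xi$ at every even time. The dynamics is invariant under flipping all opinions, so I may assume $\xi=1$ when convenient. For $1\Rightarrow 2$ I would use monotonicity: the one-step update is coordinatewise non-decreasing (a majority of an odd number of $\pm1$ values cannot decrease when an input increases), hence so is each iterate. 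Taking $\xi=1$, let $\tilde\xi_0$ be a witness to weak $t$-stability, so $\tilde\xi_{2m}(v)=1$ for all $m$; as $\tilde\xi^*_0$ agrees with $\tilde\xi_0$ on $V(T_v)$ and equals $1$ elsewhere, we have $\tilde\xi_0\le\tilde\xi^*_0$ coordinatewise, whence $\tilde\xi^*_{2m}(v)\ge\tilde\xi_{2m}(v)=1$, i.e. $v$ is $0$-stable with respect to $\tilde\xi^*_0$, which is statement 2.

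Next I would record the freezing observation, used for both remaining implications: running the dynamics from $\tilde\xi^*_0$, every vertex $z\in V(T)\setminus V(T_v)$ satisfies $\tilde\xi^*_s(z)=\xi$ for all $s\ge0$. The proof is induction on $s$. The only edge joining $T_v$ to the rest of $T$ is $uv$, so each outside vertex $z\neq u$ has all of its neighbours outside $V(T_v)$, while $u$, being a non-leaf and hence of degree $3$, has exactly one neighbour, $v$, inside $T_v$ and two outside. If all outside vertices carry $\xi$ at time $s$, then each outside $z\neq u$ takes a majority over neighbours all equal to $\xi$, and $u$ takes the majority of $\{\,\xi,\xi,\tilde\xi^*_s(v)\,\}=\xi$; so all outside vertices, including $u$, still carry $\xi$ at time $s+1$. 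In particular $\tilde\xi^*_j(u)=\xi$ for every $j$.

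The last two implications rest on a determinacy fact. Because $uv$ is the unique edge between $T_v$ and its complement, the evolution of $T_v$ under any vector agreeing with $\xi_t$ on $V(T_v)$ is a function of $\xi_t|_{V(T_v)}$ and of the opinion sequence of $u$ alone; moreover, unwinding the recursion for $\hat\xi_{k+1}(v)$ with $k$ odd (so $k+1$ even) shows the only opinions of $u$ that are queried occur at odd times $j\le k$. Thus, for any $\hat\xi_0$ as in statement 3, the value $\hat\xi_{k+1}(v)$ is determined by $\xi_t|_{V(T_v)}$ and by $\hat\xi_j(u)=\xi$ for odd $j\le k$. Since $\tilde\xi^*_0$ shares the data on $V(T_v)$ and, by freezing, has $\tilde\xi^*_j(u)=\xi$ at every odd $j$, it produces the same value: $\hat\xi_{k+1}(v)=\tilde\xi^*_{k+1}(v)$. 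Assuming statement 2 this equals $\xi$, giving statement 3. Conversely, assuming statement 3, I would apply it to the admissible choice $\hat\xi_0:=\tilde\xi^*_0$ --- admissible exactly because freezing forces $\tilde\xi^*_j(u)=\xi$ at all odd $j$ --- obtaining $\tilde\xi^*_{k+1}(v)=\xi$ for every odd $k$, i.e. $\tilde\xi^*_{2m}(v)=\xi$ for all $m$; hence $v$ is $0$-stable with respect to $\tilde\xi^*_0$ and so weakly $t$-stable, which is statement 1.

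I expect the main obstacle to be the existential quantifier concealed in weak $t$-stability: statement 1 only guarantees \emph{some} favourable environment outside $T_v$, whereas statements 2 and 3 refer to a single canonical environment. The resolution combines monotonicity --- the all-$\xi$ environment is extremal, hence works whenever any environment does --- with the freezing observation, which shows that this extremal environment in fact feeds $v$ the constant signal $\xi$ along the edge $uv$; the remaining parity bookkeeping behind the determinacy of $\hat\xi_{k+1}(v)$ is routine.
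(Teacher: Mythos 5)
Your proof is correct and follows essentially the same route as the paper: your monotonicity argument for $1\Rightarrow 2$ is the paper's induction ``if $\tilde\xi_k(w)=\xi_t(v)$ then $\tilde\xi^*_k(w)=\xi_t(v)$'' in disguise, your determinacy-plus-parity bookkeeping for $2\Rightarrow 3$ is exactly the paper's induction showing $\hat\xi_i(w)=\tilde\xi^*_i(w)$ for $w\in V(T_v)$ with $d_T(w,v)\equiv i \pmod 2$, and $3\Rightarrow 1$ via $\hat\xi_0:=\tilde\xi^*_0$ is identical. Your unconditional ``freezing'' observation (every vertex outside $T_v$ holds $\xi$ forever under $\tilde\xi^*_0$, using that $u$ has degree $3$) is a slightly cleaner packaging of the paper's remark that $u$ is $1$-stable with opinion $\tilde\xi^*_0(v)$, but it is the same underlying fact.
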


\begin{proof}

$(1 \implies 2)$ Let $\tilde\xi$ be any vector from the definition of weak $t$-stability. Let us note that for every $w \in V(T)$, if $\tilde\xi_0(w) = \xi_t(v)$ then $\tilde\xi_0^*(w) = \xi_t(v)$. So, by induction on $k$, we get that for every $k \in \mathbb{Z}_{\geq 0}$, if $\tilde\xi_k(w) = \xi_t(v)$, then $\tilde\xi_k^*(w) = \xi_t(v)$. Hence, if $v$ is 0-stable w.r.t. $\tilde\xi_0$, then it is also 0-stable w.r.t. $\tilde\xi_0^*$.

$(2 \implies 3)$ Fix an odd $k > 0$, a vector $\hat\xi_0$ from the third definition of weak stability, and the vector $\tilde\xi_0^*$ from the second definition. Let us note that $u$ is 1-stable w.r.t. $\tilde\xi^*_0$ with $\tilde\xi_1^*(u)=\tilde\xi^*_0(v)=\xi_t(v)$ since its child $v$ is 0-stable and all vertices outside $V(T_v)$ have initial opinion $\tilde\xi^*_0(v)$. So, for every odd $j \in \{1, 3, \ldots, k\}$, 
$$
\hat\xi_{j}(u) = \hat\xi_0(v)=\xi_t(v)=\tilde\xi_1^*(u)=\tilde\xi^*_{j}(u).
$$
Therefore, for every $i \in \{0, 1, \ldots k\}$ and for every vertex $w \in V(T_v)$ such that $d_T(w, v)$ has the same parity as $i$, $\hat\xi_i(w)=\tilde\xi^*_i(w)$. Indeed, for $i=0$, this is clearly true since $\tilde\xi_0^*|_{V(T_v)}=\hat\xi_0|_{V(T_v)}$. Assuming that $\hat\xi_{i-1}(w)=\tilde\xi^*_{i-1}(w)$ for every vertex $w \in V(T_v)$ such that $d_T(w, v)$ has the same parity as $i-1$, we immediately get that $\hat\xi_{i}(w)=\tilde\xi^*_{i}(w)$ for every vertex $w \in V(T_v)$ such that $w\neq v$ and $d_T(w, v)$ has the same parity as $i$. Moreover, if $i$ is odd, then we do not have to consider the vertex $w=v$ at time $i$. If $i$ is even, then $\hat\xi_{i}(v)=\tilde\xi^*_{i}(v)$, since opinions in these two processes coincide for children of $v$ at time $i-1$, and the parent $u$ has equal opinions $\hat\xi_{i-1}(u)=\tilde\xi^*_{i-1}(u)$ as we have just proved as $i-1$ is odd.
 So, indeed, $\hat\xi_{k+1}(v) = \tilde\xi^*_{k+1}(v) =\tilde\xi^*_0(v) = \hat\xi_0(v)$.

$(3 \implies 1)$ Let us note that $u$ is 1-stable w.r.t. the vector $\tilde\xi^*_0$ introduced in the second definition of weak stability with $\tilde\xi^*_1(u)=\tilde\xi^*_0(v)$, since $u$ has degree 3 and all vertices outside $V(T_v)$ have initial opinion $\tilde\xi^*_0(v)$. So, the vector $\tilde\xi^*_0$ satisfies conditions in the third definition of weak stability, thus we set $\hat\xi_0 := \tilde\xi^*_0$. We know that $\hat\xi_{k+1}(v)=\hat\xi_0(v)$ for all odd $k$, that is $v$ is 0-stable w.r.t. $\hat\xi_0$. Hence, $\tilde\xi_0:=\hat\xi_0$ witnesses the weak $t$-stability of $v$.
    
\end{proof}

\begin{claim}

Let $u \in V(T)$ be a parent of a weakly $t_1$-stable vertex $v \in V(T)$ for $t_1 \in \mathbb{Z}_{\geq0}$. Let $t_2 > t_1$ have the same parity as $t_1$. For every time step $t \in \{t_1+1, t_1+3, \ldots t_2-1\}$ let $\xi_{t}(u) = \xi_{t_1}(v)$. Then, $\xi_{t_2}(v) = \xi_{t_1}(v)$.

\label{cl:maintain_weak_value}
    
\end{claim}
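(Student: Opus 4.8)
The plan is to deduce the statement as a direct instance of the third equivalent characterisation of weak stability established in Claim~\ref{cl:equivalent_weak_definitions}, applied to the tail of the dynamics starting at time $t_1$. Since majority dynamics is deterministic and time-homogeneous, the trajectory $\xi_{t_1}, \xi_{t_1+1}, \xi_{t_1+2}, \dots$ is exactly the trajectory of the process run from the initial configuration $\hat\xi_0 := \xi_{t_1}$; that is, $\hat\xi_i = \xi_{t_1+i}$ for all $i \ge 0$. Under this identification I intend to feed the hypothesis on $u$ into condition~(3) of Claim~\ref{cl:equivalent_weak_definitions} (taken with $t=t_1$, which is legitimate because $v$ is weakly $t_1$-stable) and read off the conclusion.

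Concretely, I would set $k := t_2 - t_1 - 1$. Because $t_2 > t_1$ and $t_2$ has the same parity as $t_1$, the difference $t_2 - t_1$ is even and at least $2$, so $k$ is a positive odd integer, matching the range of $k$ required by condition~(3). Two premises must then be checked. The first, $\hat\xi_0|_{V(T_v)} = \xi_{t_1}|_{V(T_v)}$, holds trivially since $\hat\xi_0 = \xi_{t_1}$. For the second, note that as $j$ ranges over the odd numbers in $\{1,3,\dots,k\}$ the shifted time $t_1+j$ ranges over precisely $\{t_1+1, t_1+3, \dots, t_2-1\}$; hence the hypothesis $\xi_t(u) = \xi_{t_1}(v)$ on this set gives $\hat\xi_j(u) = \xi_{t_1+j}(u) = \xi_{t_1}(v) = \hat\xi_0(v)$ for every such $j$, which is exactly the second premise of~(3).

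Applying condition~(3) of Claim~\ref{cl:equivalent_weak_definitions} then yields $\hat\xi_{k+1}(v) = \hat\xi_0(v)$, and translating back through $\hat\xi_{k+1} = \xi_{t_1+k+1} = \xi_{t_2}$ and $\hat\xi_0 = \xi_{t_1}$ gives the desired equality $\xi_{t_2}(v) = \xi_{t_1}(v)$. I do not expect a genuine obstacle here beyond careful bookkeeping: the only points needing attention are the parity argument ensuring $k$ is a positive odd integer so that condition~(3) is applicable, and the index matching that turns the hypothesis on $u$ over the interval $\{t_1+1, \dots, t_2-1\}$ into the required statements $\hat\xi_j(u) = \hat\xi_0(v)$. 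Once the time-shift identification $\hat\xi_i = \xi_{t_1+i}$ is in place, the claim is immediate from the already-proved equivalence.
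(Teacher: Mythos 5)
Your proposal is correct and is essentially identical to the paper's own proof: both apply the third equivalent characterisation from Claim~\ref{cl:equivalent_weak_definitions} with $\hat\xi_0 := \xi_{t_1}$ and $k := t_2 - t_1 - 1$, using the time-shift identification $\hat\xi_i = \xi_{t_1+i}$ to translate the hypothesis on $u$ into the premise of condition~(3) and read off the conclusion. Your parity bookkeeping (checking that $k$ is a positive odd integer) is slightly more explicit than the paper's, but there is no substantive difference.
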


\begin{proof}
Let us apply the third equivalent definition from Claim~\ref{cl:equivalent_weak_definitions} with $\hat{\xi}_0 := \xi_{t_1}$ and $k := t_2-t_1 - 1$. Then, 
$\hat{\xi}_t = \xi_{t_1 + t}$ for any $t > 0$. 
So, $\hat\xi_{j}(u) = \hat\xi_{0}(v)$ for every odd $j \in \{1, 3, \ldots, k\}$. 
So it holds that $\hat\xi_{k+1}(v) = \hat\xi_{0}(v)$. 
Hence, $\xi_{t_2}(v) = \hat{\xi}_{k+1}(v) = \hat{\xi}_0(v) = \xi_{t_1}(v)$, as needed.
    
\end{proof}

\begin{claim}
Let $t_1, t_2 \in \mathbb{Z}_{\geq0}$ be such that $t_2 - t_1 \in 2\mathbb{Z}_{>0}$. Let $v \in V(T)\setminus\{R\}$ be weakly $t_1$-stable, and, for every $t \in \{t_1, t_1+2, \ldots, t_2\}$, let $\xi_t(v) = \xi_{t_1}(v)$, then $v$ is weakly $t_2$-stable.
\label{cl:maintain_weak_stability}
\end{claim}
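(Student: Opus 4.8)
The plan is to run the argument through the second characterisation of weak stability in Claim~\ref{cl:equivalent_weak_definitions}: a vertex $v$ is weakly $t$-stable if and only if its \emph{canonical extension} $\tilde\xi^*_t$ (equal to $\xi_t$ on $V(T_v)$ and equal to $\xi_t(v)$ everywhere outside $V(T_v)$) makes $v$ $0$-stable. Since majority dynamics is invariant under a global sign flip, I may assume $a:=\xi_{t_1}(v)=1$. Writing $\Delta:=t_2-t_1\in 2\mathbb{Z}_{>0}$, the hypothesis says $\xi_{t_1+2m}(v)=1$ for all $0\le m\le \Delta/2$, and the goal becomes: the canonical extension $Q$ of $\xi_{t_2}|_{V(T_v)}$ keeps $v$ equal to $1$ at every even time, i.e.\ $Q_{2m}(v)=1$ for all $m\ge 0$.

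First I would record the monotonicity of the synchronous update $w\mapsto \mathrm{sign}\sum_{w'\sim w}\xi(w')$: because each coordinate map is nondecreasing, two trajectories launched from pointwise-ordered initial vectors stay pointwise ordered for all time. Applying this to the canonical $t_1$-extension $\tilde\xi_0$, which pointwise dominates $\xi_{t_1}$ (equal on $V(T_v)$ and equal to $1\ge\xi_{t_1}$ outside), gives $\tilde\xi_s\ge \xi_{t_1+s}$ on all of $V(T)$ for every $s$. In particular the canonical $t_1$-process dominates the true process on $V(T_v)$, and at $v$ at the even times $2m\le\Delta$ both equal $1$, so the two processes coincide at $v$ there. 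Combined with weak $t_1$-stability (which keeps $v$ at $1$ forever in the canonical $t_1$-process and, as in the proof of Claim~\ref{cl:equivalent_weak_definitions}, keeps the parent $u$ at $1$ at all odd times), this domination is the structural input I would build on.

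The heart of the argument, and the step I expect to be the main obstacle, is transferring stability from $\xi_{t_1}|_{V(T_v)}$ to $\xi_{t_2}|_{V(T_v)}$, and here monotonicity alone is genuinely insufficient. Since $\xi_{t_2}|_{V(T_v)}$ is \emph{dominated} by the canonical $t_1$-process at time $\Delta$, comparison with that process yields only the upper bound $Q_{2m}(v)\le 1$, whereas the content of the statement is the matching lower bound $Q_{2m}(v)\ge 1$; and comparison with the continued true process only lower-bounds $Q$ by values $\xi_{t_2+2m}(v)$ that lie \emph{beyond} the hypothesis window and are therefore uncontrolled. The mismatch is structural: the hypothesis constrains times up to $t_2$ while the conclusion concerns times after $t_2$, and the only monotone bridge between them points the wrong way. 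To produce the lower bound I would instead set up a parity-coupling in the spirit of the $(2\Rightarrow3)$ argument of Claim~\ref{cl:equivalent_weak_definitions}: analysing a first time at which $v$ could drop to $-1$ in $Q$, I would show, by induction on time on the set of vertices whose distance to $v$ has a prescribed parity, that $Q$ cannot fall below the stable canonical $t_1$-process at $v$, crucially feeding in the hypothesis that the true process retained the opinion $1$ at \emph{every} even time throughout $[t_1,t_2]$ to control the parity-matched neighbourhood that governs $v$.

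Two remarks on traps. It is tempting to reduce to $\Delta=2$ and iterate, but the one-step statement ``weakly $t$-stable and $\xi_t(v)=\xi_{t+2}(v)$ imply weakly $(t+2)$-stable'' appears to discard exactly the information one needs, since weak stability asserts something about the infinite future while a two-step window only pins down two steps and allows the configuration to degrade towards a non-stable one; I would therefore keep the full window $[t_1,t_2]$ throughout rather than telescope. Finally, once $Q_{2m}(v)=1$ is secured for all $m$, the second characterisation of Claim~\ref{cl:equivalent_weak_definitions} immediately gives that $v$ is weakly $t_2$-stable, and the ``maintenance'' statement for the parent, Claim~\ref{cl:maintain_weak_value}, can be invoked to streamline the bookkeeping that the coupled processes keep supplying the opinion $1$ into $T_v$ at the odd steps.
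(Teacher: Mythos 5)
Your proposal does not prove the claim, and the two places where it goes astray are related. First, the ``trap'' you identify in telescoping is not a trap: the paper's proof is exactly the reduction to $t_2=t_1+2$ followed by iteration. The one-step statement ``weakly $t$-stable and $\xi_{t+2}(v)=\xi_t(v)$ imply weakly $(t+2)$-stable'' is the claim itself in the special case $\Delta=2$, so any proof of the claim establishes it; and iterating is sound because weak $(t_1+2)$-stability is, by Claim~\ref{cl:equivalent_weak_definitions}, a property of the single restriction $\xi_{t_1+2}|_{V(T_v)}$, which together with the remaining hypotheses $\xi_{t_1+2}(v)=\xi_{t_1+4}(v)=\cdots=\xi_{t_2}(v)$ reproduces verbatim the hypotheses of the claim with indices shifted by two. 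Nothing ``degrades towards a non-stable configuration'': the conclusion of each step is precisely the certificate consumed by the next step. The paper's one-step argument is short: run the canonical extension $\tilde\xi^*$ of $\xi_{t_1}|_{V(T_v)}$ for two steps; $v$ is $0$- hence $2$-stable, all vertices outside $T_v$ hold $\xi:=\xi_{t_1}(v)$ at time $2$ (the parent keeps $\xi$ because it has degree $3$), and $\tilde\xi^*_2$ agrees with $\xi_{t_1+2}$ on every vertex of $T_v$ at even distance from $v$ --- trivially at positive even distances, where the relevant $2$-balls stay inside $T_v$, and at $v$ itself by hypothesis --- so the checkerboard decomposition of the dynamics on the bipartite tree transfers $0$-stability to the canonical extension of $\xi_{t_1+2}|_{V(T_v)}$.

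Second, the step you yourself call the heart of the argument is left unproved, and as framed it asks for the wrong kind of statement. Your monotone-coupling preliminaries are correct but, as you observe, point the wrong way and are never used in the conclusion; what actually closes the argument is not a one-sided bound (``$Q$ cannot fall below the canonical $t_1$-process at $v$'') obtained from a first-drop analysis, but an \emph{exact agreement} lemma on one parity class: $Q_0$ coincides with $\tilde\xi^*_\Delta$ on all vertices at even distance from $v$ --- inside $T_v$ by an induction on time in which the hypothesis $\xi_{t_1+2m}(v)=\xi_{t_1}(v)$ at \emph{every} even time is exactly what substitutes for the unknown true opinion of the parent, and outside $T_v$ because the canonical process freezes at $\xi$ there, which again needs the degree-$3$ argument at the parent. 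Only then does bipartiteness give $Q_{2m}(v)=\tilde\xi^*_{\Delta+2m}(v)=\xi$ for all $m$, and Claim~\ref{cl:equivalent_weak_definitions} finishes. This lemma is never formulated in your write-up, and the phrase ``$Q$ cannot fall below'' conflates the inequality monotonicity could give with the equality the parity argument requires. If you supply the agreement lemma, your full-window argument becomes a correct, un-telescoped rendering of the paper's proof; as written, the proposal assembles the right ingredients (canonical extensions, the parity splitting, the role of the hypothesis at all even times) but does not prove the claim, and its one concrete structural assertion --- that the $\Delta=2$ reduction must fail --- is false.
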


\begin{proof}

It is sufficient for us to prove the statement when $t_2 = t_1+2$, as the former statement follows from applying the latter several times.

Let us consider the extension $\tilde \xi_0^*$ from the second definition of weak $t_1$-stability from Claim~\ref{cl:equivalent_weak_definitions}: $\tilde\xi_0^*|_{V(T_{v})}=\xi_{t_1}|_{V(T_{v})}$ and $\tilde\xi_0^*|_{V(T)\setminus V(T_{v})}\equiv\xi:=\xi_{t_1}(v)$. Then $v$ is 0-stable and, hence, 2-stable (w.r.t. $\tilde\xi^*$). Moreover, all vertices of $T$ outside $T_{v}$ share opinion $\xi$ at time step $2$. 

In order to apply the second definition from Claim~\ref{cl:equivalent_weak_definitions} to show the weak $(t_1+2)$-stability of $v$, it is sufficient to prove that $\tilde\xi^*_2|_{V(T_{v})}=\xi_{t_1+2}|_{V(T_{v})}$ for all vertices at even distance from $v$ (including $v$ itself). The latter is obvious for all vertices from $T_v$ that are at positive even distance from $v$ since their $\tilde\xi^*_2$- and $\xi_{t_1+2}$-opinions are defined by equal vectors $\tilde\xi^*_0|_{T_v}$ and $\xi_{t_1}|_{T_v}$ respectively.  
We complete the proof by recalling that $\xi_{t_1+2}(v)=\tilde\xi^*_2(v) = \xi$, given by the conditions of the claim. 
\end{proof}


Note that both 
 weak 0-stability and strong $t$-stability 
  properties are defined by $\xi_0|_{V(T_v)}$ only.  Now, letting $\xi_0$ to be chosen uniformly at random, we set
$$
p_{\mathrm{s}}(t,h,v):=\mathbb{P}(v\text{ is strongly }t\text{-stable in }T^{(h)}),\quad
p_{\mathrm{w}}(t,h,v):=\mathbb{P}(v\text{ is weakly }t\text{-stable in }T^{(h)}).
$$


\begin{remark}
Note that, for any non-overlapping subsets $U_1,U_2\subset V(T=T^{(h)})$ events defined by $\xi_0|_{U_1}$ and $\xi_0|_{U_2}$ are independent. In particular, if any vertex from a set $U\subset V(T)$ is not a descendant of any other vertex from $U$, then the events 
 $\{v\text{ is strongly }t\text{-stable / weakly } 
  0\text{-stable in }T^{(h)}\}$, $v\in U$, are (mutually) independent.
\label{rm:independancy_general}
\end{remark}
\begin{remark}
Due to symmetry, a strongly $t$-stable vertex has equal probabilities for both opinions at time $t$, i.e. the probability that $v \in V(T)\setminus \{R\}$ is strongly $t$-stable and $\xi_t(v) = -1$ equals to $p_{\mathrm{s}}(t,h,v)/2$.
\label{rm:equal_strong_probabilities}
\end{remark}
We also set
$$
p_{\mathrm{s}}(t) := \inf_{h\in\mathbb{Z}_{>0},\, v \in V(T=T^{(h)}) \setminus R:\, h_{T}(v) > 0} p_{\mathrm{s}}(t, h, v),\quad
p_{\mathrm{w}}(t) := \inf_{h\in\mathbb{Z}_{>0},\, v \in V(T=T^{(h)}) \setminus R} p_{\mathrm{w}}(t, h, v).
$$

The key  for the most of our calculations in the proof of Theorem~\ref{th:2} is the following Claim~\ref{cl:binary_calculations}.

\begin{claim}
We have the following bounds on $p_{\mathrm{s}}(3)$, $p_{\mathrm{s}}(2)$, $p_{\mathrm{s}}(1)$, $p_{\mathrm{s}}(0)$, and $p_{\mathrm{w}}(0)$:
$$
p_{\mathrm{s}}(3) > 0.4453,
\quad
p_{\mathrm{s}}(2) > 0.5617,
\quad
p_{\mathrm{s}}(1) \geq \frac{1}{2}p_{\mathrm{w}}(0)^2,
\quad
p_{\mathrm{s}}(0) > 0.5,\quad
p_{\mathrm{w}}(0) > 0.925.
$$
\label{cl:binary_calculations}
\end{claim}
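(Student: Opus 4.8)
The plan is to exploit locality: since strong $t$-stability and weak $0$-stability of a non-root vertex $v$ depend only on $\xi_0|_{V(T_v)}$ (as noted just before the statement) and $T_v$ is itself a perfect binary tree, each of $p_{\mathrm{s}}(t,h,v)$ and $p_{\mathrm{w}}(t,h,v)$ depends only on $m:=h_T(v)$. I would therefore write $p_{\mathrm{s}}^{(m)}(t)$, $p_{\mathrm{w}}^{(m)}(t)$ for the corresponding probabilities on a height-$m$ subtree, so that $p_{\mathrm{s}}(t)=\inf_{m\ge 1}p_{\mathrm{s}}^{(m)}(t)$ and $p_{\mathrm{w}}(t)=\inf_{m\ge 0}p_{\mathrm{w}}^{(m)}(t)$. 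The central structural observation is a dichotomy between the two notions, visible from the update rule at a degree-$3$ vertex $v$ with children $c_1,c_2$ and parent $u$, namely $\xi_{s+1}(v)=\mathrm{sign}(\xi_s(c_1)+\xi_s(c_2)+\xi_s(u))$: for \emph{strong} stability the parent is adversarial, so the opinion of $v$ at a given parity is forced only if \emph{both} children carry the forcing opinion at the preceding step, whereas for \emph{weak} $0$-stability the parent is cooperative (it is frozen to $\xi_0(v)$ in the extension $\tilde\xi^*_0$ of Claim~\ref{cl:equivalent_weak_definitions}), so it suffices that \emph{at least one} child does. I would make this precise by introducing, for each parity, the ``pinning'' probability $\alpha_m$ (resp. $\beta_m$) that a height-$m$ subtree with a cooperating frozen parent outputs the parent value at all times of the relevant parity, together with the boundary values $\alpha_0=1$ and $\beta_0=\tfrac12$ (a leaf copies its parent at later steps but has a free opinion at time $0$). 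This yields the clean identity $p_{\mathrm{s}}^{(m)}(0)=\alpha_{m-1}^2$, using independence of $T_{c_1},T_{c_2}$ (Remark~\ref{rm:independancy_general}), while $p_{\mathrm{w}}^{(m)}(0)$ is the probability that at each odd step at least one of the two independent child-output sequences equals the common value.

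Two of the bounds then fall out cleanly. For $p_{\mathrm{s}}(1)\ge\tfrac12 p_{\mathrm{w}}(0)^2$ I would verify the sufficient condition that both children $c_1,c_2$ of $v$ are weakly $0$-stable with $\xi_0(c_1)=\xi_0(c_2)$: then $\xi_1(v)$ equals this common value irrespective of $u$, and Claim~\ref{cl:maintain_weak_value} (applied to each child, whose parent $v$ now repeats the common value at all odd times) keeps both children on that value at all even times, so $v$ is strongly $1$-stable. By symmetry (cf.\ Remark~\ref{rm:equal_strong_probabilities}) each child is weakly $0$-stable with a prescribed opinion with probability $p_{\mathrm{w}}(0)/2$, and by independence this event has probability $2\,(p_{\mathrm{w}}(0)/2)^2=\tfrac12 p_{\mathrm{w}}(0)^2$; the case $m=1$ (children are leaves, always weakly $0$-stable) only helps. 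The remaining bounds $p_{\mathrm{s}}(0)>0.5$, $p_{\mathrm{w}}(0)>0.925$, $p_{\mathrm{s}}(2)>0.5617$ and $p_{\mathrm{s}}(3)>0.4453$ require actually evaluating the infima, which I would do by analysing the recursions relating $\alpha_m,\beta_m$ across one level of the tree, with the parities of $v$, its children and its grandchildren alternating.

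The hard part is that the ``at least one child'' condition does \emph{not} reduce to the marginal pinning probability: a subtree whose output leaves the cooperating value at some step may later return to it, so the governing recursion must carry the joint \emph{temporal law} of a subtree's output, not merely the probability that it stays pinned forever. I would handle this by arguing that, with the parent frozen, the coarse state ``current output equals the cooperating value or not'' of a height-$m$ subtree evolves as a finite-state Markov chain whose transition probabilities are themselves assembled from the height-$(m-1)$ chains, and then computing the relevant survival probabilities (the chance that the disjunction of two independent copies never records the bad symbol at the prescribed parity). For $t=2$ and $t=3$ I would run the same analysis started two, respectively three, steps in; this both shifts the governing parity (explaining why the odd-parity quantity $p_{\mathrm{s}}(3)$ is the smallest) and lets the initial transient die down (explaining why $p_{\mathrm{s}}(2)>p_{\mathrm{s}}(0)$).

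Finally, to replace each infimum over $m$ by a single number I expect to prove monotonicity in $m$: intuitively a taller subtree is ``less pinned'', so I would show $p_{\mathrm{s}}^{(m)}(t)$ and $p_{\mathrm{w}}^{(m)}(t)$ are non-increasing in $m$ and converge, whence the infimum is the limit; this should follow either from a coupling presenting the height-$m$ output process as a perturbation of the height-$(m+1)$ one, or directly from monotonicity of the level recursion. The two main obstacles I anticipate are: (i) setting up the parity bookkeeping and the joint-law Markov chain so that the ``at least one'' tolerance is captured \emph{exactly} rather than crudely bounded --- a naive ``both children pinned'' bound loses far too much, e.g.\ it would give $\alpha_1\ge\tfrac14$ instead of the true $\alpha_1=\tfrac34$, and hence only $p_{\mathrm{s}}(0)\gtrsim 0.06$ rather than $>0.5$; and (ii) carrying the resulting numerical estimates to enough precision to clear the stated thresholds $0.5$, $0.925$, $0.5617$ and $0.4453$.
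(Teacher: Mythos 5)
Your structural reformulation is largely sound, and your easy bound is exactly the paper's: for $p_{\mathrm{s}}(1)\geq\frac12 p_{\mathrm{w}}(0)^2$ the paper likewise takes both children weakly $0$-stable with a common initial opinion (using Claim~\ref{cl:weak_stabilization} to conclude $0$-stability of the children, hence strong $1$-stability of $v$). Your identity $p_{\mathrm{s}}^{(m)}(0)=\alpha_{m-1}^2$ and the ``at least one child pinned at every odd step'' characterisation of weak $0$-stability are correct (they need the parity-sector argument that, given the root pinned at even times, each child's odd-time trajectory coincides with its frozen-parent trajectory, plus the all-$(-\xi)$ adversarial extension for necessity --- both provable along the lines of Claim~\ref{cl:equivalent_weak_definitions}). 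But the engine you propose for the actual numbers has a genuine flaw. The coarse indicator ``current output equals the cooperating value'' is \emph{not} a Markov chain: majority dynamics on $T_v$ is deterministic given $\xi_0$, and the conditional law of the future given the indicator history depends on the unobserved internal configuration. Worse, the recursion across heights does not close in the form you describe: inside a height-$m$ subtree with frozen parent, the root is \emph{not} frozen from its children's viewpoint --- the root's odd-time values feed back into the children's even-time sector (visibly so from height $3$ on) --- so the two child-output processes are neither independent copies of, nor assembled from, the height-$(m-1)$ frozen-parent chains. Your fallback, monotonicity of $p_{\mathrm{s}}^{(m)}(t)$ and $p_{\mathrm{w}}^{(m)}(t)$ in $m$, is asserted without proof and is not obviously true.

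The paper avoids both obstacles. Instead of tracking trajectory laws exactly, it proves one-way inheritance lemmas for weak stability (Claims~\ref{cl:maintain_weak_value},~\ref{cl:maintain_weak_stability},~\ref{cl:weak_grandparent},~\ref{cl:weak_rising}) and uses them to derive a scalar recursion \emph{inequality} for $q_t:=1-p_{\mathrm{w}}(0,h,u)$ at distance $t$ from the leaves, $q_t\leq P_1(q_{t-2})+P_2(q_{t-4})P_3(q_{t-2})$ with explicit polynomials, whose smallest positive fixed point $q\in(1/16,3/40)$ gives $p_{\mathrm{w}}(0)>0.925$ uniformly in height by induction on $t$ --- no monotonicity in $m$ is ever needed. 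The bounds $p_{\mathrm{s}}(2)$ and $p_{\mathrm{s}}(3)$ are then \emph{not} obtained by ``running the same analysis a few steps in'' (the opinion vector at time $2$ is no longer uniform, so that plan does not directly apply); rather, the paper writes $p_{\mathrm{s}}(2,h,v)\geq p_{\mathrm{s}}(0)+\mathbb{P}(\mathcal{B})$ and $p_{\mathrm{s}}(3,h,v)\geq p_{\mathrm{s}}(1,h,v)+\mathbb{P}(\mathcal{A})$ for explicit finite local events $\mathcal{A},\mathcal{B}$ on children and grandchildren (e.g.\ $\mathcal{B}$: all four grandchildren carry the opinion opposite to $\xi_0(v)$ and two non-sibling ones are weakly $0$-stable), whose probabilities are evaluated directly in terms of $p_{\mathrm{w}}(0)$ and the lower-order $p_{\mathrm{s}}$ values. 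To complete your programme you would either have to import this weak-stability machinery, or replace your finite-state chain by the law of the full random response map of a subtree (parent-trajectory $\mapsto$ output-trajectory), which is a far heavier object from which the thresholds $0.925$, $0.5617$, $0.4453$ have not been extracted.
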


We postpone the proof of Claim~\ref{cl:binary_calculations} to Section~\ref{sc:4:4}. In the subsequent section, we state and prove the lemma that asserts the lower bound in Theorem~\ref{th:2}.

\subsection{Proof of the lower bound}
\label{sc:4:2}

Here we prove the following:




\begin{lemma}
Let $T$ be an $R$-rooted perfect binary tree of height $h$. Let $\xi_0\in\{-1,1\}^{V(T)}$ be chosen uniformly at random. Then there exists $c_- > 0.5$ such that whp there is at least one not $\lfloor c_- \cdot h\rfloor$-stationary vertex.
\label{lm:binary_lower}   
\end{lemma}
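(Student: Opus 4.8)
The plan is to exhibit, whp, a single vertex that changes its opinion as late as time $\lfloor c_-\cdot h\rfloor$; since a genuine change at time $t$ forces a vertex to be not $(t-2)$-stationary, this yields the lemma after harmlessly shrinking $c_-$. Following the proof strategy, I force such a late change by propagating a travelling $(-1)$-wave up a path of length $\approx c_-\cdot h$: a source near the bottom flips to $-1$, and this flip then moves up the path one vertex per time step, so that a vertex near the top flips only at time $\approx c_-\cdot h$. The conduction mechanism is exactly the one behind Claim~\ref{cl:main_optimal}: at each internal path vertex $x_j$ (degree $3$), the \emph{off-path} child subtree is made to hold the value $-1$ permanently, so that once the path-child $x_{j-1}$ has turned $-1$ the two children outvote the parent and $x_j$ turns $-1$ as well, irrespective of what happens above. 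The role of strong stability is precisely to \emph{guarantee} that these off-path pendant subtrees keep the value $-1$ at the relevant times, no matter the opinions outside them; and the $+1$-medium the wave eats through is self-correcting, since an isolated premature $-1$ on the path is immediately outvoted by its two $+1$ path-neighbours.

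Probabilistically, I set $L=\lfloor c_-\cdot h\rfloor$ with $c_->1/2$ and work inside the $2^{\,h-L}$ pairwise disjoint subtrees $T_w$ of height $L$ rooted at the vertices $w$ of depth $h-L$. By Remark~\ref{rm:independancy_general}, events determined by $\xi_0$ restricted to these disjoint subtrees are mutually independent, so it suffices to (i) lower bound by $\exp(-\gamma L)$ the probability that a single $T_w$ produces a late change (certified at a vertex strictly inside $T_w$, so that its pre-wave value is controlled from within), and then (ii) balance the two exponential rates: $2^{\,h-L}=\exp\!\big((1-c_-)h\ln 2\big)$ disjoint trees against the per-tree failure probability, so that $2^{\,h-L}\exp(-\gamma L)\to\infty$ and hence $\prod_w\big(1-\exp(-\gamma L)\big)\to 0$. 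Writing $L=c_-\cdot h$, the balance requires $\gamma<\tfrac{1-c_-}{c_-}\ln 2$, whose right-hand side tends to $\ln 2$ from below as $c_-\downarrow 1/2$; thus \emph{any} $\gamma$ bounded strictly below $\ln 2$ already buys some admissible $c_->1/2$.

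For step (i) I exploit the branching freedom inside $T_w$: there are $2^{L}$ downward paths from $w$ to the leaves, and I need only \emph{one} of them to be conducting. Along a fixed downward path each off-path sibling subtree must be strongly stable \emph{with opinion $-1$}, an event of probability $p_{\mathrm{s}}(\cdot)/2$ by Remark~\ref{rm:equal_strong_probabilities}; but since at every level I may descend into either child, the relevant object is a Galton--Watson-type exploration whose mean offspring is $2\cdot\tfrac12\,p_{\mathrm{s}}(\cdot)=p_{\mathrm{s}}(\cdot)$. This is subcritical, yet because $p_{\mathrm{s}}(0)>1/2$ by Claim~\ref{cl:binary_calculations}, the probability of reaching depth $L$ along some conducting path decays only like $p_{\mathrm{s}}^{\,L}$ up to polynomial corrections, giving $\gamma=\ln\!\big(1/p_{\mathrm{s}}(0)\big)<\ln 2$. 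This is exactly where the strict inequality $p_{\mathrm{s}}(0)>1/2$ (not merely $\ge 1/2$) is indispensable: the branching of the tree recovers the factor $2$ that fixing the travelling opinion would otherwise cost.

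The main obstacle is making step (i) rigorous while keeping $\gamma<\ln 2$, and it has two technical facets. First, the $(-1)$-wave passes vertex $x_j$ at a time whose parity depends on $j$, so the off-path subtree must hold $-1$ \emph{at the correct parity}; this is why the several bounds $p_{\mathrm{s}}(0),p_{\mathrm{s}}(1),p_{\mathrm{s}}(2),p_{\mathrm{s}}(3)$ of Claim~\ref{cl:binary_calculations} are needed rather than a single one, and why the exploration must be run with the stability time shifting along the path. Second, one must certify that the $+1$-medium survives until the wave arrives: an isolated premature $-1$ is corrected, but one has to preclude the nucleation of adjacent $-1$'s that would erode the medium early, and this constraint must be folded into the same exploration so that it contributes only a sub-$\ln 2$ exponent. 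Controlling these two effects simultaneously, so that the per-tree exponent stays strictly below $\ln 2$, is the crux; independence across the $2^{\,h-L}$ disjoint subtrees then closes the argument by a routine product/second-moment estimate.
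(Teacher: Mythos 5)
Your overall framework---a travelling $(-1)$-wave conducted by off-path pendant subtrees that are strongly stable at $-1$, a certificate measurable on $\xi_0|_{V(T_w)}$, independence across the $2^{h-L}$ disjoint subtrees via Remark~\ref{rm:independancy_general}, and an exponent balance---is the paper's strategy in outline, but your probability accounting has a fatal error. The certificate must also prescribe the $+1$ medium on the path itself: for the certified vertex's \emph{first} flip to occur at time $\approx L$, the on-path vertices at the wave parity must be forced to hold $+1$ until the wave arrives, which in this scheme means prescribing their initial opinions (as in the paper's requirement that $\xi_0^*(v)=1$ for every $v\in V(P_4)$ at even distance from a leaf). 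This costs an extra factor $2^{-L/2}$ per path, and it cannot be recovered by branching: the factor $2^L$ from the $2^L$ path choices is already fully spent cancelling the sign factor $\tfrac12$ in $\mathbb{P}(E(u))=p_{\mathrm{s}}(\cdot)/2$ of the $L$ off-path events (one per level, since every flip needs both children at $-1$). The per-subtree success probability is therefore not $p_{\mathrm{s}}^L$ but $\Theta\bigl(2^L\cdot 2^{-L/2}\prod_j (p_{\mathrm{s}}(\cdot)/2)\bigr)=\bigl(p_{\mathrm{s}}(2)p_{\mathrm{s}}(3)/2\bigr)^{L/2}$, i.e.\ $\gamma\approx\tfrac12\ln\bigl(2/(p_{\mathrm{s}}(2)p_{\mathrm{s}}(3))\bigr)\approx 1.5\ln 2>\ln 2$, and your balance $2^{h-L}e^{-\gamma L}\to\infty$ then caps $L$ at about $0.4h$, strictly below the required $0.5h$. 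Note also that the decisive inequality is not $p_{\mathrm{s}}(0)>1/2$: since the wave parity alternates along the path, the pendant subtrees must be strongly stable two steps after their height parity, so the relevant quantities are $p_{\mathrm{s}}(2)$ and $p_{\mathrm{s}}(3)$ (and $p_{\mathrm{s}}(1)\geq\tfrac12 p_{\mathrm{w}}(0)^2\approx 0.43<1/2$, so even your optimistic constant fails at odd parities); the inequality the paper actually needs is $p_{\mathrm{s}}(2)p_{\mathrm{s}}(3)>1/4$, see~\eqref{eq:constants}. To reach $\gamma<\ln 2$ along a single monotone path you would need $p_{\mathrm{s}}(2)p_{\mathrm{s}}(3)>1/2$, far beyond Claim~\ref{cl:binary_calculations}.

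The missing idea is the paper's up-and-down routing, which is exactly what buys $c_->0.5$: inside a subtree $T_{R^*}$ of height only $d$, the wave starts at a source $u_4$ two levels above the leaves, runs \emph{up} a path $P_4$ to $v_4$, turns at the pivot $v_3$ whose parent $v_1$ is forced $0$-stable at $-1$ (requirement~1---this gadget simultaneously insulates the construction from everything above $R^*$ and lets the wave change direction), and then runs \emph{down} a disjoint path $P_5$ to a bottom anchor $u_5$ held $0$-stable at $+1$, whose parent first flips at time $2d-7$. A stabilisation time $\approx 2d$ is thus paid for at the probabilistic cost of a height-$d$ subtree, and with $p_{\mathrm{s}}(2)p_{\mathrm{s}}(3)>0.2501>1/4$ the balance~\eqref{eq:prob_final_steps} admits $d$ slightly above $h/4$, hence $\tau>h/2$; a single upward path cannot achieve this doubling. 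Two further points would need repair even for your weaker single-path bound: (i) ``certifying strictly inside $T_w$'' does not insulate the top---since the off-path children are pinned at $-1$, a sustained $-1$ at the uncontrolled parent of the subtree root launches a premature \emph{downward} wave (each path vertex then sees two $-1$ neighbours), destroying the first-flip timing, so an analogue of requirement~1 is unavoidable; and (ii) your ``routine second-moment/Galton--Watson'' step is unjustified as stated, because the per-path events for overlapping paths are not independent but conflicting---the paper instead observes that $\mathcal{B}(P)$ and $\mathcal{B}(P')$ are \emph{disjoint} for paths separating among their first $d-7$ vertices (the off-path stability demanded by one path contradicts the conductivity of the sibling), so the union probability is an exact sum $2^{d-7}q$, which is the correct replacement for your heuristic.
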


\begin{proof}

Let us note that 
\begin{equation}
0.25003 < \left(\log_2\left(\frac{4}{0.2501}\right)\right)^{-1} < \left(\log_2\left(\frac{4}{p_{\mathrm{s}}(2)p_{\mathrm{s}}(3)}\right)\right)^{-1},
\label{eq:constants}
\end{equation}
due to Claim~\ref{cl:binary_calculations}. We fix an odd number $d$ satisfying
\begin{equation}
0.25003 \cdot h < d < \left(\log_2\left(\frac{4}{0.2501}\right)\right)^{-1}\cdot h.
\label{eq:restrictions_on_d}
\end{equation}

Let $R^* \in V(T)$ be at distance $d$
from a leaf, define $T^* := T_{R^*}$. First, we will define a class $\mathcal{F}$ of functions $\xi_0^*: V(T^*) \rightarrow \{-1, 1\}$ such that for each initial opinion $\xi_0^*$ on $V(T^*)$ from $\mathcal{F}$ there exists a vertex $u \in V(T^*)$ which is not $(2d-9)$-stable 
for every extension of $\xi_0^*$ to the entire $V(T)$. Secondly, we will show that this class is so large that whp, for a uniformly random $\xi_0\in\{-1,1\}^{V(T)}$, there exists $R^*$ as above so that $\xi_0|_{V(T^*)}\in\mathcal{F}$  leading to the fact that 
$\tau_T\geq$ $2d-8$ whp. The latter immediately implies the statement of Lemma~\ref{lm:binary_lower}   due to~\eqref{eq:restrictions_on_d}.

\paragraph{Definition of the desired set $\mathcal{F}$.}

Let $v_0, v_1$ be the children of $R^*$, $v_2, v_3$ be the children of $v_1$, and $v_4, v_5$ be the children of $v_3$. 
For a vertex $v \in V(T^*)$ and $r := h_{T^*}(v) \bmod 2$, let us define the event 
$$E(v) = \{v \textit{ is strongly }(r+2)\textit{-stable and } \xi_{r+2}(v) = -1\textit{, for any extension }\xi_0\textit{ to the entire }V(T)\}.$$
Note that this event is entirely described by the values of $\xi_0^*$ at $V(T_v)$. The following requirements on $\xi_0^*$ define the set $\mathcal{F}$ (for an illustration, we refer to Figure~\ref{fig:graph}).

\begin{figure}[h]
    \centering
    \includegraphics[scale=0.20]{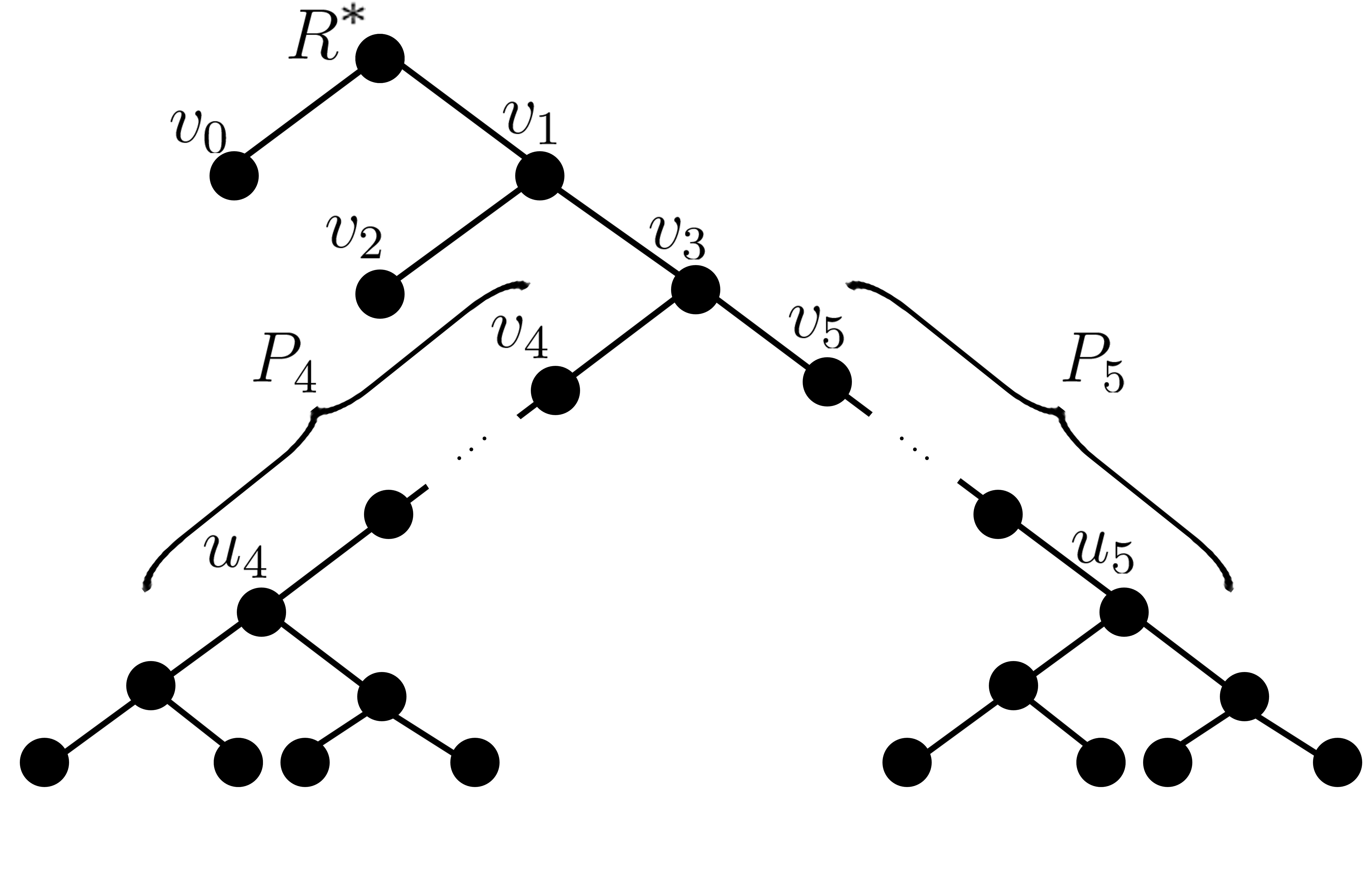}
    \caption{Structure of the tree $T^*$.}
    \label{fig:graph}
\end{figure}

\begin{enumerate}

\item $\xi^*_0|_{V(T_{R^*}) \setminus V(T_{v_3})}$ satisfies the following condition: for any its extension $\xi_0$ to the entire $V(T)$, $v_1$ is $0$-$stable$ with $\xi_0(v_1) = -1$.


\item 
there exists a path $P_4\subset T_{v_4}$ from $v_4$ to some vertex $u_4$ at distance $2$ from a leaf such that

\begin{itemize}
    \item $\xi^*_0(v) = 1$ for every $v\in V(P_4)$ at even distance from a leaf;
    

    \item $E(u)$ holds for the child $u\notin V(P_4)$ of every vertex $v \in V(P_4)\setminus\{u_4\}$;
    
    \item $\xi^*_0(u) =-1$ for all grandchildren $u$ of $u_4$.

\end{itemize}

\item 
there exists a path $P_5\subset T_{v_5}$ from $v_5$ to some vertex $u_5$ at distance $2$ from a leaf such that

\begin{itemize}
    \item $\xi_0^*(v) = 1$ for every $v\in V(P_5)$ at even distance from a leaf;
    

    \item $E(u)$ holds for the child $u\notin V(P_5)$ of every vertex $v \in V(P_5)\setminus\{u_5\}$;
    
    \item $\xi^*_0(u) = 1$ for all grandchildren $u$ of $u_5$.

\end{itemize}


\end{enumerate}

Now we show that the elements $\xi_0^*$ of $\mathcal{F}$ have the required property of the existence of not $(2d-9)$-stable vertices (i.e. if $\xi_0|_{T_{R^*}} = \xi_0^* \in \mathcal{F}$, than there exists a not $(2d-9)$-stable vertex). Namely, we will prove that: 
\begin{itemize}
\item[(1)] first, sequentially from $u_4$~to~$v_4$, vertices along $P_4$ adopt opinion $-1$ for the first time (among those time steps that have the same parity as their heights), 

\item[(2)] then, $v_3$ changes its opinion for the first time (among those time steps that have the same parity as $h_T(v_3)$),

\item[(3)] finally, after the above two `phases', vertices of $P_5\setminus\{u_5\}$ change their opinions for the first time (among those time steps that have the same parity as their heights) sequentially in the opposite direction from $v_5$ to $u_5$. 
\end{itemize}
This can be easily shown by induction, as we will see soon.  Consider the set of vertices $S:= V(P_4) \cup \{v_3\} \cup V(P_5) \setminus \{u_5\}$. The path induced by $S$ in $T$ defines on $S$ the order with the least element $u_4$.
 Let $t: S \rightarrow \mathbb{Z}_{\geq 0} \cup \{\infty\}$ maps $v \in S$ to the first time step such that $\xi_{t(v)}(v) = -1$, among the time steps with the same parity as the height of $v$. In order to show the existence of a not $(2d-9)$-stable vertex, it suffices to show by induction that, if $v \in S$ is a preceding neighbour of $u  \in S$ and $t(v) < \infty$, then $t(v) = t(u) - 1$. Indeed, by the definition of $\mathcal{F}$, $t(u_4) = 2$. Therefore, we get (1), (2), (3), and eventually, for the parent $u$ of $u_5$, $t(u) = 2d-7$, as required. So, let us finally show $t(v)=t(u)-1$ for $v$ and $u$ as above.

First, let us prove the base of induction: $t(v) = t(u) - 1$ if $t(v)<\infty$, where $u$ is the parent of $u_5$ and $v$ is the parent of $u$. Indeed, $u_5$ is $0$-stable since all its grandchildren are leaves and have initial opinion 1 and $u_5$ has opinion $1$ by the definition of $\mathcal{F}$. So, each of these five vertices remain opinion 1 at every even time step.

Now, suppose that the induction hypothesis is true for some $v' \in S$. We have to prove that it is also true for the preceding neighbour $u$ of $v'$. Indeed, let $v$ be the preceding neighbour of $u$. By the definition of $\mathcal{F}$, for every $z \in \{v, u, v'\}$ holds $\xi_{(h_{T^*}(z) \bmod 2)}(z) = 1$, hence $t(u) \geq 2$. In addition, the neighbour $w$ of $u$ other than $v$ and $v'$ is $(r+2)$-stable with opinion $\xi_{r+2}(w)=-1$, where $r = (h_{T^*}(v') \bmod 2)$. Therefore, $\min(t(v), t(v')) \geq t(w)$. So, $u$ changes its opinion to -1 exactly when at least one of its two neighbours $v,v'$ changes its opinion to -1, that is $t(u) = \min(t(v), t(v')) + 1$ (even for infinite $t(u)$). So, if $t(v) < \infty$, then $t(u) < \infty$. Also, by the induction step, $t(v') > t(u)$. Therefore, $t(u) - 1 = t(v)$, completing the inductive proof.



\paragraph{The existence of $R^*$.} We have to show that the three requirements in the definition of $\mathcal{F}$ meet sufficiently frequently. Let us note, that the first requirement is achieved if the following holds:
\begin{itemize}
    \item $\xi_0(v_0) = \xi_0(v_1) = \xi_1(v_2) = -1$;
    \item $v_0$ is strongly $0$-stable;
    \item $v_2$ is strongly $1$-stable.
\end{itemize}

Indeed, this implies that $-1$ is the common opinion of both $v_0$ and $v_1$ at every even time step and both $R^*$ and $v_2$ at every odd time step.
The probability of the latter event is at least $\frac{1}{8} p_{\mathrm{s}}(1) p_{\mathrm{s}}(0)\geq \frac{1}{16} p_{\mathrm{w}}(0)^2 p_{\mathrm{s}}(0) > 0$ due to Remark~\ref{rm:independancy_general}, Remark~\ref{rm:equal_strong_probabilities}, and Claim~\ref{cl:binary_calculations}. 


In what follows, we prove coinciding bounds for probabilities of the second and third requirements in the definition of $\mathcal{F}$. Since the proofs are literally the same (one is obtained from another by replacing $P_4, v_4, u_4$ with $P_5, v_5, u_5$ respectively, and by changing the number of the requirement), we provide the proof of the bound for the second requirement only.


There are exactly $2^{d-5}$ different paths from $v_4$ to a grandparent $u_4 \in V(T_{v_4})$ of a leaf. Fix one such path $P$. For every $i \in \{2, \ldots ,d-4\}$, we denote by $w_i$ the unique vertex at distance 1 from $P$ with $h_T(w_i) = i$. 
 The probability that the requirement $2$ from the definition of $\mathcal{F}$ holds for this $P$ playing the role of $P_4$ (we denote this event by $\mathcal{B}(P)$) is 
$$
q := 2^{-(d-3)/2-4} \cdot \prod\limits_{i=2}^{d-4}\frac{p_{\mathrm{s}}(({i\bmod 2}) + 2, h, w_i)}{2},
$$
due to Remark~\ref{rm:independancy_general} and Remark~\ref{rm:equal_strong_probabilities}. 

By the definitions,  
$$
q \geq 2^{-(d-3)/2-4} \cdot \left(\frac{p_{\mathrm{s}}(2)\cdot p_{\mathrm{s}}(3)}{4}\right)^{(d-5)/2}.
$$

Let us consider two paths $P,P'$ in $T_{v_4}$ from $v_4$ to grandparents of leaves and let assume that they fulfill the requirement 2 and separate at some vertex $v$ (i.e. $v$ is the closest to leaves vertex such that $v \in V(P) \cap V(P')$), then if $d(v, v_4) \leq d-7$ then the respective events $\mathcal{B}(P)$ and $\mathcal{B}(P')$ are disjoint. Indeed, let $w\in P$, $w'\in P'$ be two children of $v$. Since $P$ satisfies the second requirement, as we proved above, the vertex $w$ is not $((d(v, v_4) \bmod 2)+2)$-stable while the vertex $w'$ is $((d(v, v_4) \bmod 2)+2)$-stable (the latter follows immediately from the second requirement as $E(w')$ holds). At the same time, the opposite is also true since we have to replace $w$ and $w'$ in the second requirement for the path $P'$ --- a contradiction. 

So, if $P$ and $P'$ differ by at least one of their $d-7$ vertices closest to $v_4$, excluding $v_4$, then events $\mathcal{B}(P)$ and $\mathcal{B}(P')$ do not intersect. Therefore, probability that $\mathcal{B}(P)$ holds for some $P$ is at least

$$
2^{d-7} \cdot q \geq 2^{-(d-3)/2-6} \cdot (p_{\mathrm{s}}(2) \cdot p_{\mathrm{s}}(3))^{(d-5)/2}.
$$

Let us notice, that the three requirements in the definition of $\mathcal{F}$ are defined by initial opinions of vertices from three non-overlapping sets. In particular, the second and the third requirements are supported by vertices from $V(T_{v_4})$ and $V(T_{v_5})$, respectively. Thus, due to Remark~\ref{rm:independancy_general}, the three events that a uniformly random $\xi_0^*$ satisfies each of the requirements are independent.
We conclude the asymptotic lower bound
$$
\mathbb{P}(\xi_0^* \in \mathcal{F}) =  \Omega\left(\left(\frac{p_{\mathrm{s}}(2)p_{\mathrm{s}}(3)}{2}\right)^{d}\right).
$$


The events $\xi_0^*:=\xi_0|_{T_{R^*}} \in \mathcal{F}$ are independent over all $R^*$ with $h_T(R^*)=d$ by Remark~\ref{rm:independancy_general}. Due to~\eqref{eq:constants}~and~\eqref{eq:restrictions_on_d}, 
\begin{equation}
3 \cdot 2^{h-d-1} \cdot \mathbb{P}(\xi_0^* \in \mathcal{F}) = \Omega\left(2^{h-2d}(p_{\mathrm{s}}(2)p_{\mathrm{s}}(3))^{d}\right) = \omega(1),
\label{eq:prob_final_steps}
\end{equation}
and then whp the event $\xi_0^* \in\mathcal{F}$ holds for at least one $R^*$, as needed.

\end{proof}



The proof of the lower bound in Theorem~\ref{th:2} is completed, and we now switch to the upper bound which is separately stated in the subsequent section.

\subsection{Proof of the upper bound}
\label{sc:4:3}

This section is devoted to the proof of the upper bound in Theorem~\ref{th:2}, which is, in somewhat different terms, stated below.

\begin{lemma}
\label{lm:binary_upper}   
Let $T$ be an $R$-rooted perfect binary tree of height $h$. Let $\xi_0\in\{-1,1\}^{V(T)}$ be chosen uniformly at random. Then there exists $c_+ < 2/3$ such that whp all vertices are $\lceil c_+ \cdot h\rceil$-stationary.
\end{lemma}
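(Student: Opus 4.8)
The plan is to argue by contraposition via the ``propagation path'' observation already exploited in the proof of Claim~\ref{cl:active}. If some vertex fails to be $\lceil c_+h\rceil$-stationary, then some vertex $v$ makes a genuine change at a time $t>c_+h$, and tracing this change one step backwards at a time produces a simple path $P=u_0u_1\ldots u_\ell$ with $\ell\ge t-2$ along which a single opinion $\xi$ is adopted consecutively: $u_i$ switches to $\xi$ exactly one step after $u_{i-1}$. Call such a $P$ \emph{conducting}. It then suffices to fix a constant $c_+<2/3$ and to show that whp no conducting path of length $\ell>c_+h$ exists; a union bound over all candidate paths yields the lemma.

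Two quantities have to be balanced. First, the number of simple paths of length $\ell$: parametrising a path by its topmost vertex $z$, at height $j\ge\ell/2$, together with two downward branches of total length $\ell$, one obtains at most $\sum_{j\ge\ell/2}2^{h-j}2^{\ell}\lesssim 2^{\,h+\ell/2}$ of them up to polynomial factors. Second, the probability that a fixed $P$ is conducting. For this I would use that each $u_i$ below the turning point has exactly one neighbour $s_i$ off $P$, that the pendant subtrees $T_{s_i}$ are pairwise disjoint, and hence that events read off the restrictions $\xi_0|_{V(T_{s_i})}$ are mutually independent by Remark~\ref{rm:independancy_general}. For $u_i$ to make its prescribed switch, $s_i$ must feed the travelling opinion at the correct step; but a side vertex that is weakly $0$-stable, or becomes weakly stable immediately after its first switch, is frozen on its parity by Claims~\ref{cl:maintain_weak_value} and~\ref{cl:maintain_weak_stability} and obstructs conduction --- this is the half-stable phenomenon to be formalised in Claims~\ref{cl:weak_stabilization} and~\ref{cl:one_far_stabilization}. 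Since weak-stability probabilities are bounded away from $0$ by Claim~\ref{cl:binary_calculations}, each $s_i$ blocks with probability at least some constant $\beta$, and independence gives $\mathbb{P}(P\text{ conducting})\le(1-\beta)^{\Omega(\ell)}$.

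Combining the two, the expected number of conducting paths of length $\ell>c_+h$ is at most $\sum_{\ell>c_+h}2^{\,h+\ell/2}(1-\beta)^{\Omega(\ell)}$, which is $o(1)$ as soon as $2\cdot\bigl(2^{1/2}(1-\beta)\bigr)^{c_+}<1$; for $\beta$ close to $p_{\mathrm w}(0)>0.925$ this holds with room to spare for some $c_+<2/3$. The main obstacle is exactly this numerical balance. Because the path count already costs a factor $2^{\,h+\ell/2}$, a crude blocking bound --- in which only a side vertex frozen at the \emph{wrong} opinion obstructs, giving $\beta\approx p_{\mathrm w}(0)/2\approx0.46$ --- is far too weak and would force a constant well above $2$ in terms of $h$. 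The substantive work is therefore to set up the equations ``$\xi_t(v)=\xi$'' forced by conduction so that weak stability of a pendant vertex contradicts them \emph{irrespective of its frozen value}, lifting the effective blocking probability towards the full $p_{\mathrm w}(0)$; to cover both time parities, which is where the ``weakly stable after the first change'' events join the weakly $0$-stable ones; and to dispose of the $O(1)$ exceptional vertices near the turning point and the leaves, which do not affect the exponential rate. I expect lifting $\beta$ from $p_{\mathrm w}(0)/2$ towards $p_{\mathrm w}(0)$ to be the crux.
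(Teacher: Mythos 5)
Your skeleton coincides with the paper's: contrapositive via a travelling-opinion path, the path count $2^{h+\ell/2}$, independence of blocking events supported on the disjoint pendant subtrees $T_{s_i}$ via Remark~\ref{rm:independancy_general}, and weak stability (plus the ``weakly stable after the first switch'' property, which the paper formalises as being \emph{1-close to stability} in Claim~\ref{cl:weak_calculations}) as the obstruction mechanism. But the core probabilistic estimate has a genuine gap, in two respects. First, the assertion that a single side vertex which is weakly stable ``obstructs conduction irrespective of its frozen value'' is not true as stated: Claims~\ref{cl:weak_stabilization} and~\ref{cl:one_far_stabilization} only freeze opinions when \emph{two} weakly stable vertices are joined by a path whose every-other-vertex opinions match appropriately at a common time. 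For this reason the paper cannot work with singleton events on each $s_i$; it must build joint events on triples $U_i=\{w_i,u_{i+1},w_{i+2}\}$ spanning two consecutive path steps (the blocks $A_i$, made disjoint by the red/blue splitting), run a case analysis over $(\xi_0(v_i),\xi_0(v_{i+2}))$, and invoke the quantitative inputs of Claims~\ref{cl:weak_calculations} and~\ref{cl:strong_calculations}. The resulting per-block conduction bounds are $0.02$, $2\sqrt{p_+p_-}\approx 0.045$, and $0.1026$ depending on the signs of $\xi_0(v_i),\xi_0(v_{i+2})$ --- nowhere near your hoped-for per-vertex failure rate $1-\beta\approx 1-p_{\mathrm{w}}(0)\approx 0.075$.

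Second, with these realistic rates your uniform-$\beta$ union bound does not close, so the claim that the numerics hold ``with room to spare'' is wrong --- and this is where the paper's one genuinely missing idea in your plan enters. For $c_+<2/3$ one needs the per-step conduction probability $\rho$ to satisfy $1+c_+\left(\tfrac12+\log_2\rho\right)<0$, i.e.\ $\rho<\tfrac14$ at $c_+=2/3$ (blocking rate $\beta>3/4$ per step). Using the worst block bound $0.1026$ uniformly gives $\rho\approx\sqrt{0.1026}\approx 0.32>\tfrac14$, forcing $c_+\approx 0.88$, which fails the target. The paper escapes by conditioning on $\xi_0|_{V(P)}$, assigning each block the sign-dependent weight $q(i)=2\sqrt{p_+^{k_+(i)}p_-^{k_-(i)}}$ with $p_+=0.01$, $p_-=0.0513$, and then averaging over the path's own initial opinions, so that the telescoped product yields $O\left((p_++p_-)^{d/2}\right)$ with $\sqrt{p_++p_-}\approx 0.2476<\tfrac14$ --- in effect, the rare bad blocks (both opinions $-1$) are charged against the improbability of the path configuration that creates them. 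Without this averaging device, which your proposal does not contain, the exponential rate you can prove lands on the wrong side of the $\beta=3/4$ threshold, and the lemma's constant $c_+<2/3$ is out of reach; even with it, the paper clears the threshold only barely, not comfortably.
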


We postpone the proof of Lemma~\ref{lm:binary_upper}  to the end of this section since it uses several auxiliary stability properties that are stated and proved in Section~\ref{sc:upper_auxuliary}. We complete the proof of Lemma~\ref{lm:binary_upper} in Section~\ref{sc:4:3:2}. 


Let us first briefly overview the proof. We note that a vertex $v$ changes its opinion at a time step $t$ if there is a path $P$ ending in $v$ such that the opinion ``travels'' along this path. We show that, for a fixed path $P$, the probability that an opinion ``travels'' along $P$ is exponentially small in terms of the length of the path and then apply the union bound over all paths of the same length, concluding the proof of the upper bound.

In order to get the exponentially small probability bound of the described event, we observe specific properties of the neighbourhood of $P$ that prevent the travel of an opinion along $P$ (due to Claim~\ref{cl:weak_stabilization} and Claim~\ref{cl:one_far_stabilization} stated in the next section). Probabilities of these preventing properties are estimated via Claim~\ref{cl:weak_calculations} and Claim~\ref{cl:strong_calculations} stated in the next section.

\subsubsection{Auxiliary claims}
\label{sc:upper_auxuliary}

In this section, we prove six claims that we will use in the proof of Lemma~\ref{lm:binary_upper} in Section~\ref{sc:4:3:2}.  Claim~\ref{cl:weak_grandparent} and Claim~\ref{cl:weak_rising} describe sufficient conditions for the weak $t$-stability to be inherited from some descendants of a vertex. Claim~\ref{cl:weak_stabilization} and Claim~\ref{cl:one_far_stabilization} describe sufficient conditions for stabilisation of opinions of a path between two weakly $t$-stable vertices. In particular, Claim~\ref{cl:weak_stabilization} deals with the case when these weakly $t$-stable vertices have the same opinion and Claim~\ref{cl:one_far_stabilization} deals with the case when these vertices have different opinions, requiring an additional property of being 1-close to stability. Claim~\ref{cl:weak_calculations} and Claim~\ref{cl:strong_calculations} describe bounds on probabilities of some specific events.

\begin{claim}
    
Let $t \in \mathbb{Z}_{\geq 0}$ and let $u, v \in V(T) \setminus \{R\}$ be such that 
\begin{itemize}
    \item $u$ is a grandchild of $v$;
    \item $u$ is weakly $t$-stable;
    \item $\xi_{t}(v) = \xi_t(u)$.
\end{itemize}
Then $v$ is weakly $t$-stable.
\label{cl:weak_grandparent}   
\end{claim}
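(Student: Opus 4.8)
The plan is to verify the weak $t$-stability of $v$ straight from the definition, by exhibiting a concrete extension and proving that $v$ is $0$-stable under it. Write $\xi := \xi_t(v) = \xi_t(u)$. Let $c$ be the child of $v$ on the path to $u$ (so $u$ is a child of $c$), let $c'$ be the other child of $v$, let $u'$ be the sibling of $u$ (the other child of $c$), and let $p$ be the parent of $v$, which exists since $v \neq R$. I take the canonical extension $\tilde\xi^*_0$ from Claim~\ref{cl:equivalent_weak_definitions}: set $\tilde\xi^*_0|_{V(T_v)} = \xi_t|_{V(T_v)}$ and $\tilde\xi^*_0(w) = \xi$ for every $w \in V(T)\setminus V(T_v)$, and let $\tilde\xi$ denote the resulting process. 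It then suffices to prove that $\tilde\xi_{2s}(v) = \xi$ for all $s \geq 0$, that is, that $v$ is $0$-stable with respect to $\tilde\xi^*_0$; this is exactly weak $t$-stability of $v$.

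First I would record that the ``upper'' subtree stays frozen: every vertex of $U := V(T)\setminus V(T_v)$ carries opinion $\xi$ at every time step. This follows by a one-line induction on time, since every $w \in U\setminus\{p\}$ has all of its neighbours in $U$, while $p$ has at least two of its three neighbours in $U$; hence if all of $U$ has opinion $\xi$ at time $s$, the same holds at time $s+1$. In particular $\tilde\xi_s(p) = \xi$ for all $s$, which neutralises the influence of the uncontrolled subtree $T_{c'}$ on $v$. The core of the argument is then the joint induction asserting, for all $s \geq 0$, that $\tilde\xi_{2s}(v) = \tilde\xi_{2s}(u) = \xi$, and, for all $s \geq 1$, that $\tilde\xi_{2s-1}(c) = \xi$. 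The base case holds because $\tilde\xi_0(v) = \tilde\xi_0(u) = \xi$, and because $\tilde\xi_1(c)$ is the majority among $\tilde\xi_0(v), \tilde\xi_0(u), \tilde\xi_0(u')$, of which the first two equal $\xi$, so $\tilde\xi_1(c) = \xi$.

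For the inductive step I would proceed in the order $c$, then $u$, then $v$, closing a cyclic dependency. Assuming the claim up to $s$, the opinion $\tilde\xi_{2s+1}(c)$ is the majority among $\tilde\xi_{2s}(v), \tilde\xi_{2s}(u), \tilde\xi_{2s}(u')$, and since the first two equal $\xi$ we get $\tilde\xi_{2s+1}(c) = \xi$. Now comes the key step: since $u$ is weakly $t$-stable and $\tilde\xi_0|_{V(T_u)} = \xi_t|_{V(T_u)}$, I apply the third equivalent characterisation of Claim~\ref{cl:equivalent_weak_definitions} to the vertex $u$ with parent $c$ and $k := 2s+1$. The required hypothesis, $\tilde\xi_j(c) = \xi = \tilde\xi_0(u)$ for all odd $j \leq 2s+1$, is precisely what the induction has just supplied, so the conclusion yields $\tilde\xi_{2s+2}(u) = \xi$. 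Finally $\tilde\xi_{2s+2}(v)$ is the majority among $\tilde\xi_{2s+1}(p), \tilde\xi_{2s+1}(c), \tilde\xi_{2s+1}(c')$, and since $\tilde\xi_{2s+1}(p) = \tilde\xi_{2s+1}(c) = \xi$ this again equals $\xi$. This closes the induction, and the even-time statement for $v$ is exactly its $0$-stability. I expect the only delicate point to be the bookkeeping in this key step: matching the fresh, re-indexed process $\tilde\xi$ against the re-indexed process in the weak-stability characterisation of $u$, and invoking that characterisation in the right order so that $\tilde\xi_{2s+1}(c) = \xi$ is established \emph{before} it is used, together with confirming that freezing the upper subtree really does keep $p$'s opinion constant no matter how $T_{c'}$ evolves.
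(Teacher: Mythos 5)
Your proof is correct and takes essentially the same approach as the paper: the same canonical extension $\tilde\xi^*_0$ (all of $V(T)\setminus V(T_v)$ set to $\xi$), the same observation that the subtree outside $T_v$ --- and hence the parent of $v$ --- stays frozen at $\xi$, and the same invocation of the third equivalent characterisation of weak $t$-stability from Claim~\ref{cl:equivalent_weak_definitions} applied to $u$ with parent $c$ to force $\tilde\xi_{2s+2}(u)=\xi$. The only cosmetic difference is that you run a forward joint induction on $s$ (handling $c$, then $u$, then $v$) where the paper phrases the identical argument as a minimal-counterexample contradiction at the first even time $t_1$ when $u$ or $v$ deviates.
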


\begin{proof}

Consider the vector of initial opinions $\tilde\xi_0$ defined as
\begin{itemize}
    \item $\tilde\xi_0(w) = \xi_{t}(w)$ for every $w \in V(T_v)$;
    \item $\tilde\xi_0(w) = \xi_{t}(v) =: \xi$ for every $w \in V(T) \setminus V(T_v)$.
\end{itemize}

Let us show that $\tilde\xi_0$ is the desired vector from the definition of $t$-stability, i.e. that $v$ is 0-stable for this vector of initial opinions. So, further in the proof we consider majority dynamics on $T$ with the vector of initial opinions $\tilde\xi_0$. Clearly, the parent of $v$ has opinion $\xi$ at every time step since it has degree 3 (note that $R$ has degree 3 by the definition of a binary tree). Let us prove that both $u$ and $v$ have opinion $\xi$ at every even time step. Assume the contrary and let $t_1 > 0$ be the first even time step such that this does not hold. Therefore, for every odd time step $t_0 < t_1$ the parent of $u$ has opinion $\xi$ as it is adjacent to both $u$ and $v$. So, $\tilde\xi_{t_1}(u) = \xi$ by the third equivalent definition of weak $t$-stability from Claim~\ref{cl:equivalent_weak_definitions}. Moreover, $\tilde \xi_{t_1}(v) = \xi$ since the parent of $v$ has opinion $\xi$ at time $t_1-1$ as we mentioned above --- a contradiction.
\end{proof}

\begin{claim}
Let $t \in \mathbb{Z}_{\geq 0}$ and let $u, v \in V(T) \setminus \{R\}$ be such that 
\begin{itemize}
    \item $u$ is a child of $v$;
    \item $u$ is weakly $t$-stable;
    \item $\xi_{t+1}(v) = \xi_t(u)$.
\end{itemize}
Then $v$ is weakly $(t+1)$-stable.
\label{cl:weak_rising}   
\end{claim}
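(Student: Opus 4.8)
The plan is to verify weak $(t+1)$-stability of $v$ via the second equivalent definition in Claim~\ref{cl:equivalent_weak_definitions}. Write $\xi:=\xi_{t+1}(v)=\xi_t(u)$, let $p$ be the parent of $v$ and $w$ the child of $v$ other than $u$, and take the extension $\tilde\xi_0$ with $\tilde\xi_0|_{V(T_v)}=\xi_{t+1}|_{V(T_v)}$ and $\tilde\xi_0\equiv\xi$ on $V(T)\setminus V(T_v)$. I must show $v$ is $0$-stable under $\tilde\xi$. Exactly as in Claim~\ref{cl:weak_grandparent}, every vertex outside $T_v$ keeps opinion $\xi$ forever: each such vertex has at least two neighbours outside $T_v$ (only $p$ is adjacent to $T_v$, through $v$, and $R$ has degree $3$), so $\xi$ is self-sustaining there and in particular $\tilde\xi_s(p)=\xi$ for all $s$. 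Since the neighbours of $v$ are $p,u,w$, it then suffices to prove that $u$ carries opinion $\xi$ at every odd time: this forces two $\xi$-neighbours of $v$, namely $p$ and $u$, at each odd time, whence $\tilde\xi_{2s}(v)=\xi$ for all $s\ge1$, and together with $\tilde\xi_0(v)=\xi$ this gives $0$-stability, regardless of what $w$ does.

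To control $u$ I would run a joint induction establishing, for increasing $s$, that $\tilde\xi_{2s}(v)=\xi$ and $\tilde\xi_{2s+1}(u)=\xi$. The engine is the weak $t$-stability of $u$, but it is anchored at $\xi_t|_{V(T_u)}$, whereas $\tilde\xi$ starts one step later, at $\xi_{t+1}|_{V(T_u)}$. I would bridge this by comparing the two dynamics that $T_u$ undergoes under the boundary opinions of $v$. Let $\rho$ be the $T_u$-dynamics started from $\rho_0=\xi_t|_{V(T_u)}$ with boundary values $b_0:=\xi_t(v)$ and $b_s:=\tilde\xi_{s-1}(v)$ for $s\ge1$. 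A short induction shows $\rho_{s+1}=\tilde\xi_s$ on $V(T_u)$ for every $s$: the interior of $T_u$ evolves internally and matches after the one-step shift, while the chosen boundary makes the updates at $u$ agree, the first step using the identity $\xi_{t+1}(u)=\mathrm{sign}(\xi_t(v)+\xi_t(c_1)+\xi_t(c_2))$ for the children $c_1,c_2$ of $u$. Consequently $\rho_{2s+2}(u)=\tilde\xi_{2s+1}(u)$, so it remains to see that $\rho_{2s+2}(u)=\xi$. For this I invoke the third formulation of weak $t$-stability of $u$ (Claim~\ref{cl:equivalent_weak_definitions}) with $k:=2s+1$: its hypothesis only requires the boundary of $u$ to equal $\xi$ at the odd times $1,3,\dots,2s+1$, and there $b_j=\tilde\xi_{j-1}(v)=\xi$ because $j-1$ is even and $\le 2s$, which is precisely the part of the induction hypothesis already known for $v$.

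The step I expect to demand the most care is exactly this one-step temporal shift between the certificate of $u$ (at time $t$) and that of $v$ (at time $t+1$), compounded by the fact that the even-time boundary values $b_{2r}=\tilde\xi_{2r-1}(v)$ fed into $\rho$ are the odd-time opinions of $v$, which the induction does not pin down. The resolution is a parity observation: $T$ is bipartite and $u,v$ are adjacent, so the influence of $v$'s opinion at time $s'$ on $u$'s opinion at time $s$ can be nonzero only when $s-s'$ is odd; hence $u$'s opinion at the even time $2s+2$ depends only on $\rho_0=\xi_t|_{V(T_u)}$ and on the boundary values at odd times, never on the even-time boundary. This is exactly the reason the third definition of weak stability constrains only odd boundary times, and it lets me conclude $\rho_{2s+2}(u)=\xi$ irrespective of the unknown odd-time opinions of $v$. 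Feeding this back yields $\tilde\xi_{2s+2}(v)=\xi$, closing the induction. The degenerate case where $u$ is a leaf (so $T_u=\{u\}$ with no children) is immediate, since then $\tilde\xi_{s+1}(u)=\tilde\xi_s(v)$ and $v$ already has opinion $\xi$ at even times.
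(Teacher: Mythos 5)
Your proof is correct and takes essentially the same route as the paper: the same canonical extension from the second definition in Claim~\ref{cl:equivalent_weak_definitions}, the same observation that every vertex outside $T_v$ (in particular the parent of $v$) holds $\xi$ forever, and the same joint induction deriving that $u$ holds $\xi$ at all odd times from the third equivalent definition of weak $t$-stability, which forces $v$ to hold $\xi$ at all even times. The only difference is one of explicitness: you carefully justify the one-step temporal shift between the anchor $\xi_t|_{V(T_u)}$ of $u$'s weak stability and $\tilde\xi_0|_{V(T_u)}=\xi_{t+1}|_{V(T_u)}$ via the boundary-driven coupling $\rho$ and the bipartiteness/parity argument (so that $u$'s even-time opinions depend only on the odd-time boundary values, which the induction controls), a bridging step that the paper's terse citation of the third definition leaves implicit.
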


\begin{proof}

Consider the vector of initial opinions $\tilde\xi_0$ defined as
\begin{itemize}
    \item $\tilde\xi_0(w) = \xi_{t+1}(w)$ for every $w \in V(T_v)$;
    \item $\tilde\xi_0(w) = \xi_{t+1}(v) =: \xi$ for every $w \in V(T) \setminus V(T_v)$.
\end{itemize}
As in the proof of the previous claim, we consider majority dynamics on $T$ with the vector of initial opinions $\tilde\xi_0$, and so it remains to prove that $v$ is 0-stable in this process. We note that the parent of $v$ has opinion $\xi$ always. Let us show that $u$ has opinion $\xi$ at every odd time step. This will clearly lead to $0$-stability of $v$. Assume the contrary: let $t_1 > 0$ be the first odd moment such that $\tilde\xi_{t_1}(u) \neq \xi$. Therefore, for every even time step $t_0 < t_1$ vertex $v$ has opinion $\xi$. So, $\tilde\xi_{t_1}(u) = \xi$ by the third equivalent definition of weak $t$-stability from Claim~\ref{cl:equivalent_weak_definitions}  --- a contradiction.
\end{proof}

\begin{claim}
Let $t \in \mathbb{Z}_{\geq 0}$, $\xi \in \{-1, 1\}$, $d \in 2\mathbb{Z}_{> 0}$, and a path $P=v_0v_1\ldots v_d$ in $T$ be such that 
\begin{itemize}
    \item $T_{v_0} \cap T_{v_d} = \varnothing$ (i.e. the path is not monotone);
    \item $\xi_t(v_i) = \xi$ for every even $i \in \{0, \ldots, d\}$;
    \item $v_0$ and $v_d$ are weakly $t$-stable.
\end{itemize}
Then, $v_i$ is $t$-stable for every even $i$.
\label{cl:weak_stabilization}   
\end{claim}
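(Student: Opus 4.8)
The plan is to track the opinions of the path vertices at all times of the appropriate parity and show directly that every even-indexed vertex keeps the opinion $\xi$ forever, which is exactly what $t$-stability asserts.

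First I would fix the geometry of $P$. Since $T_{v_0}\cap T_{v_d}=\varnothing$, neither endpoint is an ancestor of the other, so $P$ is not monotone: there is a unique highest vertex (an apex) $v_a$ with $1\le a\le d-1$, the subpath $v_0\ldots v_a$ ascends towards the root and $v_a\ldots v_d$ descends. In particular $v_1$ is the parent of $v_0$ and $v_{d-1}$ is the parent of $v_d$, and $v_0,v_d\neq R$. Every interior vertex $v_i$ ($0<i<d$) has both $v_{i-1}$ and $v_{i+1}$ among its neighbours; since $T$ is binary, $v_i$ is a non-leaf of degree exactly $3$, so its two path-neighbours already form a strict ($2$ out of $3$) majority of its neighbourhood. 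This is the crucial asymmetry: an interior vertex is driven by its two on-path neighbours no matter what its single off-path neighbour does, whereas each endpoint has only one on-path neighbour, so the uncontrolled opinions inside $T_{v_0}$ (resp. $T_{v_d}$) must be tamed using weak stability.

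Next I would prove two interleaved statements by induction on $m\ge 0$:
\[
(E_m):\ \xi_{t+2m}(v_i)=\xi\ \text{for all even } i,
\qquad
(O_m):\ \xi_{t+2m+1}(v_j)=\xi\ \text{for all odd } j\in\{1,\dots,d-1\}.
\]
The base $(E_0)$ is the hypothesis. For $(O_0)$, an odd interior $v_j$ has even neighbours $v_{j-1},v_{j+1}$, both $\xi$ at time $t$ by $(E_0)$, so the majority gives $\xi_{t+1}(v_j)=\xi$. For the step, assuming $(E_{m'}),(O_{m'})$ for all $m'\le m$, I first establish $(E_{m+1})$: for an interior even $v_i$ the two odd neighbours $v_{i-1},v_{i+1}$ are $\xi$ at time $t+2m+1$ by $(O_m)$, so the majority forces $\xi_{t+2m+2}(v_i)=\xi$ (this also covers the apex when $a$ is even, its third, off-path neighbour being irrelevant); for the endpoint $v_0$ I invoke Claim~\ref{cl:maintain_weak_value} with $t_1=t$, $t_2=t+2(m+1)$ and parent $v_1$, whose hypothesis ``$\xi_{t'}(v_1)=\xi$ for every odd offset $t'\in\{t+1,\dots,t+2m+1\}$'' is exactly $(O_0),\dots,(O_m)$, giving $\xi_{t+2(m+1)}(v_0)=\xi$; symmetrically for $v_d$ via $v_{d-1}$. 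Then $(O_{m+1})$ follows from $(E_{m+1})$ by the same majority argument as the base case (now covering the apex when $a$ is odd). Finally, $(E_m)$ for all $m$ says each even $v_i$ has opinion $\xi$ at every $s\ge t$ with $s\equiv t\pmod 2$, i.e. $\xi_s(v_i)=\xi_{s+2}(v_i)$, which is precisely $t$-stability.

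The only real subtlety I expect is the bookkeeping that separates the two roles of the vertices and keeps the parities aligned. The endpoints must be handled through weak stability via Claim~\ref{cl:maintain_weak_value}, which requires controlling the parent at \emph{all} intermediate odd-offset steps; this is exactly why $(O_m)$ has to be carried cumulatively alongside $(E_m)$ rather than deduced afterwards, and it is the place where the weak-stability hypothesis is genuinely used. The interior and apex vertices, by contrast, never need any stability and are pinned for free by the two-thirds on-path majority. It remains only to verify the boundary points --- that $v_0,v_d\neq R$ (so weak stability is defined), that interior path vertices are genuine non-leaves of degree $3$, and that $1\le a\le d-1$ --- all of which follow from $d\ge 2$ together with $T_{v_0}\cap T_{v_d}=\varnothing$.
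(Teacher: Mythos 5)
Your proof is correct and is essentially the paper's argument in contrapositive form: the paper takes the first time $t_1>t$ at which some even-indexed vertex deviates from $\xi$ and derives a contradiction, using the two-out-of-three on-path majority to pin interior vertices and Claim~\ref{cl:maintain_weak_value} (with the parent $v_1$, resp. $v_{d-1}$, held at $\xi$ at all intermediate odd steps) to pin the endpoints, which is exactly the content of your interleaved induction $(E_m)$, $(O_m)$. The geometric observations you verify (apex, $v_1$ parent of $v_0$, $v_0,v_d\neq R$, interior vertices of degree $3$) are implicitly used in the paper's proof as well, so the two arguments coincide.
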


    

\begin{proof}

Let us prove Claim~\ref{cl:weak_stabilization} by contradiction. Let $t_1 > t$ be the first time step with the same parity as $t$ such that, for some even $i\in\{0,\ldots,d\}$, $\xi_{t_1}(v_i) \neq \xi$. If $i \notin \{0, d\}$ then, $\xi_{t_1-2}(v_{i-2}) = \xi_{t_1-2}(v_{i}) = \xi_{t_1-2}(v_{i+2}) = \xi$, hence $\xi_{t_1-1}(v_{i-1}) = \xi_{t_1-1}(v_{i+1}) = \xi$, and hence $\xi_t(v_i) = \xi$ --- a contradiction. If $i \in \{0, d\}$, then the only neighbour of $v_i$ from $V(P)$ is the parent of $v_i$ in $T$. Clearly, this neighbour has opinion $\xi$ at every time step $t_0\in\{t+1,t+3,\ldots,t_1-1\}$. 
Therefore, by Claim~\ref{cl:maintain_weak_value}, we get $\xi_{t_1}(v_i) = \xi$ --- a contradiction.

\end{proof}

Let us say that a non-leaf vertex $v \in V(T)$, $v \neq R$, is {\it 1-close to stability} if the following property holds. Let $\tilde\xi_0:\, V(T)\to\{-1,1\}$  be
such that $\tilde\xi_0|_{V(T_v)} = \xi_0|_{V(T_v)}$ and $\tilde\xi_t(v) \neq \tilde\xi_0(v)$ for some even $t>0$. Then $v$ is weakly $t$-stable w.r.t. $\tilde\xi_t$, where $t$ is the first even moment when $\tilde\xi_t(v) \neq \tilde\xi_0(v)$.


\begin{claim}
Let $t \in 2\mathbb{Z}_{\geq 0}$, $d \in 2\mathbb{Z}_{> 0}$, even $\ell \in [d]$, and a path $P=v_0 v_1 \ldots v_d$ in $T$ be such that 
\begin{itemize}
    \item $T_{v_0} \cap T_{v_d} = \varnothing$ (i.e. the path is not monotone);
    \item $\xi_t(v_i) = \xi_t(v_0)$ for every even $i \in [\ell-2]$;
    \item $\xi_t(v_i) = \xi_t(v_d)$ for every even $i \in [d-2] \setminus [\ell-2]$;
    \item $\xi_s(v_0) = \xi_0(v_0) \neq \xi_0(v_d) = \xi_s(v_d)$ for every even $s \in [t]$;
    \item $v_0$ and $v_d$ are weakly $0$-stable;
    \item $v_0$ and $v_d$ are 1-close to stability.
\end{itemize}
Then, for every even $t'\geq t$ and every even $i \in \{2,\ldots,d-2\}$, $\xi_{t'}(v_i) \in \{\xi_{t'}(v_{i-2}),\xi_{t'}(v_{i+2})\}$. In addition, for every even $t'\geq t$, $v_d$ is either weakly $t'$-stable or $t'$-stable.
\label{cl:one_far_stabilization}
\end{claim}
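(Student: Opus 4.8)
The plan is to prove, by induction on even times $t'\ge t$, a structural invariant that is stronger than the stated conclusion: that the restriction of $\xi_{t'}$ to the even-indexed path vertices $v_0,v_2,\ldots,v_d$ is \emph{monotone}, i.e.\ there is a single ``interface'' with one opinion on all even vertices up to it and the opposite opinion on the rest, and moreover $v_0$ and $v_d$ are each weakly $t'$-stable or $t'$-stable with their current opinions. Monotonicity immediately yields $\xi_{t'}(v_i)\in\{\xi_{t'}(v_{i-2}),\xi_{t'}(v_{i+2})\}$ for interior even $i$ (an interior even vertex always agrees with the even neighbour on its own side of the interface), and the endpoint part of the invariant is exactly the ``in addition'' clause for $v_d$. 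The base case $t'=t$ holds because the hypotheses put $\xi_t(v_0)$ on $v_0,\ldots,v_{\ell-2}$ and $\xi_t(v_d)$ on $v_\ell,\ldots,v_d$, a monotone configuration with interface between $v_{\ell-2}$ and $v_\ell$; and $v_0,v_d$ are weakly $t$-stable by Claim~\ref{cl:maintain_weak_stability}, since they keep their initial opinions up to time $t$.

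For the inductive step I would first record a two-step \emph{bulk stability} fact: if an interior even vertex $v_i$ satisfies $\xi_{t'}(v_{i-2})=\xi_{t'}(v_i)=\xi_{t'}(v_{i+2})=\xi$, then the odd vertices $v_{i-1},v_{i+1}$ are forced to opinion $\xi$ at time $t'+1$ (each has two path-neighbours equal to $\xi$, so its single off-path neighbour is irrelevant), whence $\xi_{t'+2}(v_i)=\xi$. Thus every even vertex except the at most two flanking the interface keeps its opinion over two steps. The crux is the \emph{interface analysis}: at an interior interface where $v_{2k}$ carries opinion $A$ and $v_{2k+2}$ the opposite opinion $B$, the odd vertex $v_{2k+1}$ has path-neighbours of opposite opinion, so at time $t'+1$ it copies its single off-path neighbour $w_{2k+1}$, giving $\xi_{t'+1}(v_{2k+1})=W$ with $W:=\xi_{t'}(w_{2k+1})$. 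A short computation then shows that if $W=A$ then $v_{2k}$ is forced to stay $A$ while $v_{2k+2}$ is free, and if $W=B$ then $v_{2k+2}$ is forced to stay $B$ while $v_{2k}$ is free. Either way the forbidden swap $(v_{2k},v_{2k+2})=(B,A)$ cannot occur, so the interface stays put or slides by one even position, preserving monotonicity (hence the no-spike property); the same computation applies verbatim when the interface sits at the apex of the path, since only the presence of a single off-path neighbour is used.

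The endpoints need separate care, as they lack the three-consecutive-even-vertices structure, and here the weak-stability hypotheses do the work. As long as the even-neighbour of $v_0$ (resp.\ $v_d$) agrees with it, its path-parent is forced to the matching opinion at the odd times, so by Claim~\ref{cl:maintain_weak_value} (applied over the whole relevant range of odd times, not a single step) the endpoint keeps its opinion, and by Claim~\ref{cl:maintain_weak_stability} it remains weakly $(t'+2)$-stable. If instead the interface reaches an endpoint, monotonicity forces every other even vertex to share the interior opinion, so the step can only resolve in one of two ways: the interface slides back inward (monotonicity preserved), or the endpoint flips — but a flip turns the entire even configuration into a single opinion, and since $v_0,v_d$ are $1$-close to stability, the flipping endpoint becomes weakly stable with its new opinion at that first even moment of change. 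At such a ``merge'' both endpoints are weakly stable with a common opinion shared by all even vertices, so Claim~\ref{cl:weak_stabilization} upgrades every even vertex to $t'$-stability, which closes the induction and supplies the ``$v_d$ is $t'$-stable'' alternative from then on.

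I expect the main obstacle to be the bookkeeping of the boundary cases — the interface within distance two of an endpoint, the degenerate small-$d$ configurations (e.g.\ $d=2$, where the first conclusion is vacuous), and especially verifying that a flipping endpoint \emph{always} coincides with a full merge, so that Claim~\ref{cl:weak_stabilization} is applicable — rather than the central interface computation, which is the clean ``$W$ protects exactly one side'' observation. Care is also needed to keep the parities straight, since all structural statements live on even times and even indices and the odd layer enters only through the forced values at time $t'+1$.
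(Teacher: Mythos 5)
Your proposal is correct and is essentially the paper's own argument: the paper also maintains the monotone-interface invariant (its parameter $\ell(t')$, changing by $\pm1$ per step, with the sole undetermined vertex next to the interface), uses Claim~\ref{cl:maintain_weak_value} and Claim~\ref{cl:maintain_weak_stability} to protect the endpoints, invokes 1-closeness to make a flipped endpoint weakly stable, and finishes the merge with Claim~\ref{cl:weak_stabilization}. The only difference is packaging — the paper defines $t_1$ as the first endpoint flip, proves monotonicity up to $t_1-1$ (statement A1) and then shows the flip forces $\ell(t_1-1)=1$, i.e.\ a full merge (statement A2), whereas you fold both phases into one running induction; your flagged obstacle (flip implies merge) is resolved exactly as you sketch, since a flip of $v_0$ requires $\xi_{t'+1}(v_1)\neq\xi_{t'}(v_0)$ by Claim~\ref{cl:maintain_weak_value}, which forces the interface to already sit at the endpoint.
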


    

\begin{proof}

Let $t_1$ be the first even time step such that either $\xi_{t_1}(v_0) \neq \xi_0(v_0)$ or $\xi_{t_1}(v_d) \neq \xi_0(v_d)$. From the fourth condition of the claim, it follows that $t_1>t$. We shall prove that,
\begin{itemize}
    \item[A1]
    for every time step $t' \in \{t, t+1, \ldots, t_1-1\}$, there exists $\ell(t') \in [d+1]$ with the same parity as $t'$, such that,
    for every $i \in \{0, 1, \ldots, d\}$ with the same parity as $t'$, 
    $$
    \xi_{t'}(v_i) = \xi_t(v_0)\quad\text{ if and only if }\quad i < \ell(t');
    $$
    \item[A2] all $v_i$, over even $i \in \{0, 1, \ldots, d\}$, share the same opinion at time step $t_1$ and are $t_1$-stable.
\end{itemize}
Let us note that Claim~\ref{cl:one_far_stabilization} indeed follows from the latter statement. First, we have that $\xi_{t'}(v_i) \in \{\xi_{t'}(v_{i-2}),\xi_{t'}(v_{i+2})\}$ for every even $i \in \{2, \ldots, d-2\}$ both for all even $t' \in \{t, t+2, \ldots, t_1 - 2\}$ due to A1 and all even $t' \geq t_1$ due to A2. In addition, the vertex $v_d$ has not changed its opinion up to time $t_1-2$, so $v_d$ is weakly $t'$-stable for all even $t' \in \{t, t+2, \ldots, t_1 - 2\}$ due to Claim~\ref{cl:maintain_weak_stability} and the fifth requirement of the current claim. Finally, from A2, for every even $t' \geq t_1$, the vertex $v_d$ is $t'$-stable.

Let us prove A1 by induction on $t'\in \{t, t+1, \ldots, t_1-1\}$. The base of induction $t' = t$ follows from the second, the third, and the fourth requirements in the statement of the claim by taking $\ell(t') := \ell$. For the induction step from $t'-1$ to $t'$, let us note that a vertex sandwiched between two vertices with the same opinion will adopt that opinion at the next step. Also, for even time steps $t'$, we assumed that $\xi_{t'}(v_0) = \xi_{t}(v_0)$ and $\xi_{t'}(v_d) = \xi_{t}(v_d)$. So, $v_{\ell(t'-1)-1}$ is the only vertex, which opinion is not determined by the previous step. Hence, depending on its opinion $\ell(t')$ either equals $\ell(t'-1)-1$ or $\ell(t'-1)+1$. Finally, let us prove A2. Without loss of generality, let $v_0$ change its opinion at time step $t_1$. By Claim~\ref{cl:maintain_weak_stability}, $v_0$ is weakly $(t_1-2)$-stable.
By weak $(t_1-2)$-stability and Claim~\ref{cl:maintain_weak_value}, if $\xi_{t_1-1}(v_1) = \xi_{t_1-2}(v_0)$, then $\xi_{t_1}(v_1) = \xi_{t_1-2}(v_1)$.
Hence, $\xi_{t_1-1}(v_1) \neq \xi_{t_1-2}(v_0)$, which means that $\ell(t_1 - 1) = 1$. Hence, $v_d$ does not change its opinion at time step $t_1$ because $\ell(t_1 - 1) \neq d+1$.  Due to Claim~\ref{cl:maintain_weak_stability}, $v_d$ is weakly $t_1$-stable, and $v_0$ is weakly $t_1$-stable since it is 1-close to stability. Lastly, as $\ell(t_1 - 1) = 1$, for every even $i \in [d]$ holds $\xi_{t_1}(v_i) = \xi_{t_1}(v_d) = \xi_{t_1}(v_0)$. So, the statement follows from application of Claim~\ref{cl:weak_stabilization} to $v_0v_1\ldots v_d$.

\end{proof}


\begin{claim}
Let $\xi_0\in\{-1,1\}^{V(T)}$ be uniformly random.
For any non-leaf vertex $v \in V(T)$, $v \neq R$,
$$
 \mathbb{P}(v\text{ is }1\text{-close to stability})\geq 0.9957. 
$$
\label{cl:weak_calculations}   

\end{claim}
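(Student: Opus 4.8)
The plan is to reduce \emph{$1$-close to stability} to a statement about the \emph{first} even time at which $v$ changes its opinion, and then to estimate the probability of the complementary bad event using independence of the children's subtrees together with Claim~\ref{cl:binary_calculations}. Throughout I fix $v$ with children $v_L,v_R$, and recall that being $1$-close to stability is a property of $\xi_0|_{V(T_v)}$ alone: it is a universally quantified statement over extensions $\tilde\xi_0$ agreeing with $\xi_0$ on $V(T_v)$, and weak stability is witnessed inside $V(T_v)$.

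The deterministic core is the following observation. Fix any extension $\tilde\xi_0$ and let $t$ be the first even time with $\tilde\xi_t(v)\neq\tilde\xi_0(v)$; write $\eta:=\tilde\xi_t(v)$. Since $v$ flips at time $t$, a strict majority of its three neighbours equals $\eta$ at time $t-1$; as the parent $u$ of $v$ is a single vote lying outside $T_v$, at least one child, say $v_L$, has $\tilde\xi_{t-1}(v_L)=\eta$. To prove weak $t$-stability of $v$ I use the second equivalent description from Claim~\ref{cl:equivalent_weak_definitions}: run the dynamics from the time-$t$ configuration with every vertex outside $T_v$ frozen to $\eta$, and check that $v$ never leaves $\eta$ at even steps. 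In this frozen process the parent of $v$ equals $\eta$ at every step, so I argue by simultaneous induction: as long as $v=\eta$ at the even steps up to some point, $v_L$ sees a parent equal to $\eta$ at all even steps, whence, \emph{provided $v_L$ is weakly $(t-1)$-stable with opinion $\eta$}, Claim~\ref{cl:maintain_weak_value} forces $v_L=\eta$ at every odd step; then $v$ is the majority of $\{\eta,\eta,\cdot\}$ and stays $\eta$. This bootstraps to all even steps, giving weak $t$-stability of $v$. Hence $v$ fails to be $1$-close to stability only if, for some extension, the flip produces a supporting child that is \emph{not} weakly $(t-1)$-stable with opinion $\eta$ — equivalently, only if in the frozen process both children eventually drop to $-\eta$ at a common odd step.

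From here the probabilistic estimate proceeds by bounding this bad event. The key structural point is that heights $1$ and $2$ never fail: for a height-$1$ vertex the leaf-children immediately copy $v$, and for a height-$2$ vertex a child's opinion at odd steps is constant once its parent is frozen, so a supporting child can never abandon $\eta$. Thus a failure requires an opposing opinion to be \emph{sustained} from depth at least $3$ against the downward pressure of $v=\eta$, and the event that a child sustains $-\eta$ is an event inside that child's subtree. By Remark~\ref{rm:independancy_general} the two children's subtrees are independent, and Remark~\ref{rm:equal_strong_probabilities} lets me split the relevant strong-stability probabilities by opinion; plugging in the numerical bounds of Claim~\ref{cl:binary_calculations} for $p_{\mathrm{s}}(0),p_{\mathrm{s}}(1),p_{\mathrm{s}}(2),p_{\mathrm{s}}(3)$ and $p_{\mathrm{w}}(0)$ bounds the failure probability, uniformly over the height, below $1-0.9957$. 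Uniformity in the height holds because the worst case is the limiting large-height one, where the per-level estimate stabilises.

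The main obstacle is precisely the universal quantifier over extensions: the configuration $\tilde\xi_{t-1}|_{V(T_{v_L})}$ at the flip time — on which the weak $(t-1)$-stability of the supporting child depends — is itself a function of the extension, since the child's subtree evolves under the extension-dependent trajectory of $v$. The way around this is to observe that strong $1$-stability of a child, being defined through $\xi_0|_{V(T_{v_L})}$ only, transfers to every reachable time-$t$ configuration: splicing the actual trajectory of $v_L$'s parent before time $t$ with the frozen trajectory afterwards is again an admissible parent trajectory, so strong $1$-stability pins the child's odd-step opinion regardless of the restart. Controlling exactly which flip-time configurations can arise, and packaging them into an event on the children's subtrees whose probability Claim~\ref{cl:binary_calculations} estimates, is the delicate part of the argument.
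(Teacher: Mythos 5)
Your deterministic reduction is sound and in fact mirrors the first step of the paper's argument: $v$ fails to be $1$-close to stability only if, at the first even flip time $t_1$, the child supporting the flip cannot be certified weakly stable, and Claims~\ref{cl:equivalent_weak_definitions} and~\ref{cl:maintain_weak_value} do let a weakly stable supporting child bootstrap weak $t_1$-stability of $v$ in the frozen process. The genuine gap is everything after that, and you name it yourself: ``controlling exactly which flip-time configurations can arise, and packaging them into an event on the children's subtrees'' is not a delicate finishing touch but the entire content of the proof. The bad event as you state it (``the supporting child is not weakly $(t_1-1)$-stable with opinion $\eta$'') is quantified over extensions and refers to the evolved configuration $\tilde\xi_{t_1-1}|_{V(T_{v_L})}$ at a random, extension-dependent time, so it is not an event in $\xi_0|_{V(T_v)}$ and no independence or union-bound argument applies to it directly. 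The paper resolves this by descending one further level, to the grandchildren of $v$, isolating three initial-opinion-flavoured failure modes (a grandchild starting at $\xi$ that flips unstably; both grandchildren starting at $-\xi$ and neither weakly $0$-stable; one weakly $0$-stable and the other flipping to $\xi$ unstably), and then proving measurability via a disjoint decomposition $\{\exists\tau\ \mathcal{Q}^*(\tau)\}=\bigsqcup_{\tau,\chi}[\mathcal{Q}^*(\tau)\cap\{\xi_0|_{V(\tau)}=\chi\}]$ over the flip time and the pre-flip configuration, together with canonical descendants $w_1,\ldots,w_4$ whose weak $0$-stability is independent of the conditioning. None of this machinery, or any substitute for it, appears in your sketch.

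Relatedly, the numerics cannot work as you propose. The paper's proof of this claim uses only $p_{\mathrm{w}}(0)>0.925$ (the bounds $p_{\mathrm{s}}(0),\ldots,p_{\mathrm{s}}(3)$ you invoke do not enter), and the reason $0.9957$ is reachable is a two-factor structure at the grandchild level: each failure mode requires both that a flip occurs (probability at most $\tfrac34(1-p_{\mathrm{w}}(0))$, or $\tfrac14(1-p_{\mathrm{w}}(0))^2$, etc.) and that the post-flip re-stabilisation via the canonical descendants fails (conditional probability at most $1-\bigl(1-(1-p_{\mathrm{w}}(0))^2\bigr)^2$, resp.\ $(1-p_{\mathrm{w}}(0))^4$), giving $p_1+p_2+p_3\le 0.00064+0.00141+0.00001$ and $(1-p_1-p_2-p_3)^2>0.9957$. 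A one-level argument of the kind you gesture at---e.g.\ requiring a child to be strongly $1$-stable, for which Claim~\ref{cl:binary_calculations} only guarantees $p_{\mathrm{s}}(1)\ge\tfrac12 p_{\mathrm{w}}(0)^2\approx 0.43$---yields at best a bound near $1-(1-p_{\mathrm{s}}(1))^2\approx 0.67$, nowhere near the target. So the proposal is not a complete proof: the measurable event structure and the multiplicative probability estimate that produce the constant are missing, and the constant cannot be recovered from the ingredients you list.
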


\begin{proof}
Let us note that if $h_T(v) = 1$, then $v$ is strongly 0-stable and, hence, $v$ is 1-close to stability. So, we may assume that $h_T(v) \geq 2$.
 Let us first describe the proof strategy. For a child $u$ of $v$, we will define three specific events expressed only in terms of initial opinions on $V(T_u)$. We will show that if $v$ is not 1-close to stability, then for some child of $v$ at least one of the three events holds. Then, we will bound probabilities of these three events and use these bounds to derive the statement of Claim~\ref{cl:weak_calculations}.\\
 
 

Let $\xi^*_0: V(T) \to \{-1, 1\}$ be any vector of initial opinions such that $v$ is not 1-close to stability with respect to $\xi^*_0$. Let $\xi := \xi^*_0(v)$. Then, there exists a vector of initial opinions $\tilde\xi^*_0: V(T) \to \{-1, 1\}$ and a time step $t_1 \in \mathbb{Z}_{\geq 0}$ such that the following holds:
\begin{itemize}
    \item $\tilde\xi^*_0|_{V(T_v)} = \xi^*_0|_{V(T_v)}$;
    \item $t_1$ is the first even time step such that $\tilde\xi^*_{t_1}(v) \neq \xi$;
    \item $v$ is not weakly $t_1$-stable.
\end{itemize}
 
Let us prove that for some child $u$ of $v$ one of the following disjoint properties holds.

\begin{itemize}
    \item [A1] At least one of the children of $u$ has initial opinion $\xi$. For every child $v_1$ of $u$ with initial opinion $\xi$, there exists an even time step $t_2 < t_1$ such that $\tilde\xi^*_{t_2}(v_1) \neq \xi$ for the first time and $v_1$ is not $t_2$-stable.
    \item [A2] Both children of $u$ have initial opinion opposite to $\xi$ and are not weakly $0$-stable.
    \item [A3] Both children of $u$ have initial opinion opposite to $\xi$. One child $v_1$ of $u$ is weakly $0$-stable. There exists an even time step $t_2 < t_1$ such that the other child $v_2$ of $u$ has opinion $\xi$ at time step $t_2$ for the first time and $v_2$ is not weakly $t_2$-stable.
\end{itemize}

Suppose towards contradiction that A1, A2, and A3 do not hold. Let us prove that $v$ is weakly $t_2$-stable, which will lead to a contradiction.

Note that $t_1\geq 2$. Let $u$ be a child of $v$ such that $\tilde\xi^*_{t_1-1}(u) \neq \xi$. Then, both children of $u$ have opinion opposite to $\xi$ at time step $t_1-2$. 

Suppose at least one child of $u$ has opinion $\xi$ at time step zero. For a child $v'$ of $u$, let $t_2(v')\leq t_1-2$ be the first even time step such that $\tilde\xi^*_{t_2}(v') \neq \xi$. Since, by assumption, A1 does not hold for $u$, there exist a child $v_1$ of $u$ such that $v_1$ is $t_2(v_1)$-stable. So, $\tilde\xi^*_{t_1}(v) = \tilde\xi^*_{t_1}(v_1)$ and $v_1$ is $t_1$-stable. Hence, $v$ is weakly $t_1$-stable --- a contradiction. 

Now let both children of $u$ have initial opinion opposite to $\xi$. If both of them are weakly $0$-stable, then both of them are $0$-stable due to Claim~\ref{cl:weak_stabilization}. So, $v$ is weakly $t_1$-stable --- a contradiction. Due to the negation of A2, at least one of the children of $u$ has to be weakly 0-stable. So it remains to consider the case when only one child $v_1$ of $u$ is weakly $0$-stable. Let $t_2>0$ be the first even time step such that the other child $v_2$ has opinion $\xi$ which is opposite to $\tilde\xi^*_0(v_2)$. If $t_2 < t_1$, then $v_2$ is weakly $t_2$-stable, due to the negation of $A3$. Since $\tilde\xi^*_{t_2}(v)=\xi$, we get that $\tilde\xi^*_{t_2+1}(u)=\xi$ as well. 
Due to weak $t_2$-stability and Claim~\ref{cl:maintain_weak_value}, $\tilde\xi^*_{t_2+2}(v_2) = \xi$.
Proceeding by induction, we get that, for every even time step $t_2 \leq t < t_1$, $\tilde\xi^*_{t}(v_2) = \xi$. Hence, $\tilde\xi^*_{t_1-1}(u) = \xi$ --- a contradiction. 
Lastly, if $t_2 \geq t_1$, then, in the same way as above, applying inductively Claim~\ref{cl:maintain_weak_value}, we get $\tilde\xi^*_{0}(v_1) = \tilde\xi^*_{2}(v_1) = \ldots = \tilde\xi^*_{t_2}(v_1)$.
So, $v_1$ is weakly $t_1$-stable by Claim~\ref{cl:maintain_weak_stability}. Due to Claim~\ref{cl:weak_grandparent}, $v$ is weakly $t_1$-stable --- a contradiction.\\

Let us fix an opinion $\xi\in\{-1,1\}$. Everywhere below we assume that the event $\xi_0(v)=\xi$ holds deterministically. Now, fix any child $u$ of $v$, and, for every $i\in\{1,2,3\}$, denote by $p_i$ the probability of the following event $\mathcal{Q}_i$: there exists an extension $\tilde\xi_0$ of $\xi_0|_{V(T_u)}$ to the whole tree and time step $t_1$ such that 
\begin{itemize}
\item $t_1$ is the first even time step when $\tilde\xi_{t_1}(v) \neq \tilde\xi_0(v)=\xi$; \item  the property A$i$ holds with $\tilde\xi^*_t$ replaced by $\tilde\xi_t$.
\end{itemize}
The rest of the proof is devoted to estimating probabilities $p_1,p_2,p_3$ since, due to Remark~\ref{rm:independancy_general} and the union bound, it is sufficient for us to show $(1 - p_1 - p_2 - p_3)^2 > 0.9957$.


{\bf Let us start with $p_1$.} First, note that at least one child of $u$ has initial opinion $\xi$ with probability $3/4$. Let us now assume that, for some $\xi_0|_{V(T_u)}$, the considered event holds, and that an extension $\tilde\xi_0|_{V(T)}\to\{-1,1\}$ certifies the validity of this event. Let $v_1$ be a child of $u$ with initial opinion $\tilde\xi_0(v_1) = \xi$. Then $v_1$ has to change its opinion for the first time at some even time step $t_2 < t_1$. 
Hence, $v_1$ is not weakly $0$-stable by Claim~\ref{cl:maintain_weak_value}.

 Let us prove that $h_T(v_1) \geq t_2$. First, let us note that for every $t > 1$ if a vertex changed its opinion to the opposite to $\xi$ at time step $t+1$, then one of its neighbours changed its opinion to the opposite to $\xi$ at time step $t$. Now, $\tilde\xi_{t_2-1}(u) = \xi$ since $\tilde\xi_{t_2-2}(v)=\tilde\xi_{t_2-2}(v_1)=\xi$, so at least one of the children of $v_1$ should change opinion at time step $t_2-1$. Thus, we may apply a similar argument to the child of $v_1$ and then proceed by induction: For every $i\geq 1$, we have a vertex $v_i$ and its child $v_{i+1}$ such that $\tilde\xi_{t_2-i-1}(v_i)=\xi$ and $v_{i+1}$ changed its opinion to the opposite to $\xi$ at time moment $t_2-i$ (in particular, $\tilde\xi_{t_2-i-2}(v_{i+1})=\xi)$, then there is a child $v_{i+2}$ of $v_{i+1}$ that changed its opinion at time $t_2-i-1$.
 Eventually we get a descending path of vertices of length $t_2-2$ staring at $v_1$, where, in particular, for the last vertex in the path $v_{t_2-1}$ the following holds: $\tilde\xi_0(v_{t_2-1}) = \xi \neq \tilde\xi_2(v_{t_2-1})$. So, $h_T(v_{t_2-1}) \neq 1$, as that would imply that $v'$ is 0-stable. Also, $h_T(v_{t_2-1}) \neq 0$, as that would mean that $\tilde\xi_2(v_{t_2-1})$ equals opinion of its parent at time step $1$, which is $\xi$ --- a contradiction. So, $h_T(v') \geq 2$, as needed.

Let us describe the way we bound the probability $p_1$. Let us specify one of the two children of $u$ and call it {\it canonical}. For an even time step $\tau > 0$, let $\mathcal{Q}_1^*(\tau)$ denote the following event: there exists an extension $\tilde\xi_0$ of $\xi_0|_{V(T_u)}$ to the whole tree such that 
\begin{itemize}
\item $\tilde\xi_t(v)= \tilde\xi_0(v)=\xi$ for all even $t\leq\tau$; 
\item at least one of the children of $u$ has initial opinion $\xi$;
\item $\tilde\xi_{\tau}(v_1) \neq \xi$ for the first time, where $v_1$ is a (random) child of $u$ defined as follows:
if both children of $u$ have initial opinion $\xi_0(u)=\xi$, then $v_1$ is the canonical child of $u$, otherwise, $v_1$ is the only child of $u$ with the initial opinion $\xi$.
\end{itemize}

Let us prove that for every time step $t \leq \tau$ and every descendant $v'$ of $v_1$, which distance from $v_1$ has the same parity as $t$, the opinion $\tilde\xi_t(v')$ is described by initial opinions of vertices from $V(T_{u})$ only, given that the event $\mathcal{Q}^*_1(\tau)$ holds. More formally, let us consider two different vectors of initial opinions $\tilde\xi^1_0$ and $\tilde\xi^2_0$ that agree with $\xi_0$ at $V(T_u)$ and certify the validity of the events $\mathcal{Q}_1^*(\tau_1)$ and $\mathcal{Q}_1^*(\tau_2)$ for some time steps $\tau_1$ and $\tau_2$ respectively. Let $\tau$ be the minimum of $\tau_1$ and $\tau_2$. Let us prove by induction on $t$ that for every time step $t \leq \tau$ and every descendant $v'$ of $v_1$, which distance from $v_1$ has the same parity as $t$, the equality $\tilde\xi^1_t(v') = \tilde\xi^2_t(v')$ holds. In particular, $\tau_1 = \tau_2$, as $\tilde\xi^1_{\tau}(v_1) = \tilde\xi^2_{\tau}(v_1) \neq \xi$ for the first time, and then the statement follows as needed. The base of induction for $t=0$ holds since $\tilde\xi^1_0|_{V(T_{v_1})} = \tilde\xi^2_0|_{V(T_{v_1})} = \xi_0|_{V(T_{v_1})}$. The induction step follows from the definition of majority dynamics for every vertex, except for $v_1$ at any even time step $t$. Finally, for an even time step $t$ the equality $\tilde\xi^1_t(v_1) = \tilde\xi^2_t(v_1)$ holds as both children of $v_1$ have same opinions at time step $t-1$ in both processes and $\tilde\xi^1_{t-1}(u) = \tilde\xi^2_{t-1}(u) = \xi$, since $\tilde\xi^1_{t-2}(v_1) = \tilde\xi^1_{t-2}(v) = \tilde\xi^2_{t-2}(v_1) = \tilde\xi^2_{t-2}(v) = \xi$.

Let $V(\tau)$ be the set of vertices of $T_u$ at distance at most $\tau + 1$ from $u$. Let us first prove the following:
$$\{\exists \tau\ \mathcal{Q}_1^*(\tau)\} = \bigsqcup_{\tau \in 2\mathbb{Z}_{>0}}\bigsqcup_{\chi \in \{-1,1\}^{V(\tau)}}\biggl[ \mathcal{Q}_1^*(\tau)\cap\{\xi_0|_{V(\tau)} = \chi\}\biggr].
$$
In other words, we have to prove that, for distinct pairs $(\tau_1, \chi_1)$ and $(\tau_2, \chi_2)$, the two events in the right-hand side of the latter equality are disjoint.
 Suppose, for some $\xi_0$, both events hold for some distinct $(\tau_1, \chi_1)$ and $(\tau_2, \chi_2)$ simultaneously. First, we have already proved that $\tau$ such that $\mathcal{Q}^*_1(\tau)$ holds is uniquely defined for $\xi_0|_{V(T_u)}$. Therefore, $\tau_1 = \tau_2=:\tau$. But then, the equality $ \chi_1 = \chi_2$ follows, since both vectors equal $\xi_0|_{V(\tau)}$.

Next, let us note that the event $\mathcal{Q}_1$ implies the event $\{\exists \tau\ \mathcal{Q}_1^*(\tau)\}$. So, 
$$
\mathbb{P} (\mathcal{Q}_1) = \mathbb{P} (\mathcal{Q}_1 \cap \{\exists \tau\ \mathcal{Q}_1^*(\tau)\}).
$$
Thus,
\begin{align*}
\mathbb{P} (\mathcal{Q}_1) 
&= \mathbb{P} \left(\mathcal{Q}_1\cap \bigsqcup_{\tau \in 2\mathbb{Z}_{>0}}\bigsqcup_{ \chi \in \{-1, 1\}^{V(\tau)}} \biggl[\mathcal{Q}_1^*(\tau)\cap\{\xi_0|_{V(\tau)} = \chi\}\biggr]\right) \\
&= \sum_{\tau \in 2\mathbb{Z}_{>0}}\ \sum_{\chi \in \{-1, 1\}^{V(\tau)}} \mathbb{P} \left(\mathcal{Q}_1\cap \mathcal{Q}_1^*(\tau)\cap\{\xi_0|_{V(\tau)} = \chi\}\right) \\
&= \sum_{\tau \in 2\mathbb{Z}_{>0}}\  \sum_{\chi \in \{-1, 1\}^{V(\tau)}} \mathbb{P} \left(\mathcal{Q}_1 \mid \mathcal{Q}_1^*(\tau)\cap \{\xi_0|_{V(\tau)} = \chi\}\right) \cdot \mathbb{P}\left(\mathcal{Q}_1^*(\tau)\cap \{\xi_0|_{V(\tau)} = \chi\}\right).
\end{align*}
Below, for all pairs $(\tau,\chi)$, we give the same upper bound $q_1$ on $\mathbb{P} \left(\mathcal{Q}_1 \mid \mathcal{Q}_1^*(\tau)\cap\{\xi_0|_{V(\tau)} = \chi\}\right)$. We determine the exact value of $q_1$ later. As soon as this bound is determined, we may conclude that
$$
\mathbb{P} (\mathcal{Q}_1) \leq q_1 \cdot \sum_{\tau \in 2\mathbb{Z}_{>0}}\  \sum_{\chi \in \{-1, 1\}^{V(\tau)}} \mathbb{P}\left(\mathcal{Q}_1^*(\tau)\cap \{\xi_0|_{V(\tau)} = \chi\}\right) = q_1 \cdot \mathbb{P} (\exists \tau\ \mathcal{Q}_1^*(\tau)).
$$

Let us note that the event $\{\exists \tau\ \mathcal{Q}_1^*(\tau)\}$ implies the event that at least one of the children of $u$ has initial opinion $\xi$ and the vertex $v_1$ (determined in the definition of $\mathcal{Q}_1^*(\tau)$) is not weakly 0-stable. Indeed, consider the random variable $\tau$ such that $\mathcal{Q}_1^*(\tau)$ holds and an extension $\tilde\xi_0$ whose existence is claimed by the event $\mathcal{Q}_1^*(\tau)$. For any odd time step $t<\tau$ the vertex $u$ has opinion $\tilde\xi_t(u)=\xi$, so the fact that $v_1$ changes its opinion at time step $\tau$ contradicts the third equivalent definition of the weak 0-stability. So, $\mathbb{P}(\exists \tau\ \mathcal{Q}_1^*(\tau))$ does not exceed the probability of the event it implies, which is at most $3/4 \cdot (1-p_{\mathrm{w}}(0))$. Thus, finally, we may conclude that \begin{equation}
p_1 \leq q_1 \cdot 3/4 \cdot (1-p_{\mathrm{w}}(0)).
\label{eq:from_q1_to_p1}
\end{equation}

Let us now prove that $q_1 \leq 1-\left(1 - (1 - p_{\mathrm{w}}(0))^2\right)^2$. The main advantage for us is that the condition $\mathcal{Q}_1^*(\tau) \cap \{\xi_0|_{V(\tau)} = \chi\}$ is defined by the initial opinions of vertices from $V(\tau)$, hence any event independent from these initial opinions is also independent from the condition. 

For the rest of the proof, let us fix some even $\tau > 0$ and some $\chi \in V(\tau)$. We also assume that the event $\mathcal{Q}_1^*(\tau) \cap \{\xi_0|_{V(\tau)} = \chi\}$ holds and that  $\mathbb{P} \left(\mathcal{Q}_1 \mid \mathcal{Q}_1^*(\tau)\cap \{\xi_0|_{V(\tau)} = \chi\}\right)\neq 0$.

In what follows, we assume that $\mathcal{Q}_1$ holds and fix an extension $\tilde\xi_0$ whose existence is claimed by this event. Let $t_2$ be the time step from the definition of $\mathcal{Q}_1$ (with respect to $\tilde\xi_0$). We know that the event $\mathcal{Q}^*_1(t_2)$ holds and its validity is certified by the extension $\tilde\xi_0$. From this, we conclude that the random variable $\tau$ such that $\mathcal{Q}_1^*(\tau)$ holds equals $t_2$ from the definition of $\mathcal{Q}_1$. 

Let us prove that all four grandchildren of $v_1$ have the opinion $\tilde\xi_{t_2-2}(\cdot)$ opposite to $\xi$ at time step $t_2-2$ for the first time (among even time steps). 
First of all we note that $\tilde\xi_{t_2-1}(u) = \xi$ since $\tilde\xi_{t_2-2}(v_1) = \tilde\xi_{t_2-2}(v) = \xi$. Let us also recall that $\tilde\xi_{t_2}(v_1)\neq\xi$. It may only happen when both children of $v_1$ have opinion opposite to $\xi$ at time step $t_2-1$. At the same time, we have that $\tilde\xi_{t_2-2}(v_1)=\xi$. We immediately get that, indeed, all four grandchildren of $v_1$ have the other opinion at time $t_2-2$. This step is indeed the first time when it happens since otherwise $v_1$ would change its opinion earlier than at $t_2$.

Let us note that if a vertex has opinion opposite to $\xi$ at time step $t+1$, then at least one of its children has opinion opposite to $\xi$ at time step $t$. So, for every $i \in [4]$, every grandchild $u_i$ of $v_1$ has a descendant $w_i \in V(T_{u_i})$ at distance $t_2-2$ such that for every $j \in \{0, 1, \ldots, t_2 - 2\}$ and every vertex $w$ in the shortest path between $w_i$ and $u_i$ such that $d(w_i, w) = j$ has $\tilde\xi_j(w) \neq \xi$. Let us note that since dynamics on $T_{v_1}$ on $[0,t_2]$ does not depend on the choice of initial opinions outside of $T_u$, we may choose these $w_i$ independently of the extension of $\xi_0|_{V(T_u)}$ to the entire tree.
In addition, for the vertex $w$ in the path between $u_i$ and $w_i$ at distance $i$ from $w_i$, the initial opinion $\tilde\xi_i(w)$ is uniquely determined by the opinions of the vertices at the distance from $w$ not exceeding $i$. So, we may choose a set $\{w_1,w_2,w_3,w_4\}$ such that each $w_i$ relates to $u_i$ in the described way by knowing the initial opinions of $V(t_2)$ only. Let us fix any such set.

Note that events that $w_i$ are weakly 0-stable are independent from each other and the fixed condition $\mathcal{Q}_1^*(\tau) \cap \{\xi_0|_{V(\tau)} = \chi\}$. So, the probability that
\begin{itemize}
    \item at least one of $w_1$ and $w_2$ is weakly 0-stable and
    \item at least one of $w_3$ and $w_4$ is weakly 0-stable
\end{itemize}
is at least $\left(1 - (1 - p_{\mathrm{w}}(0))^2\right)^2$. Due to Claim~\ref{cl:weak_rising}, the weak 0-stability of $w_i$ together with the fact that opinions of vertices along the path from $w_i$ to $u_i$ remain the same (as time grows simultaneously with moving forward towards $u_i$) imply the weak $(t_2-2)$-stability of $u_i$. Hence, the former event implies that 
\begin{itemize}
    \item at least one of $u_1$ and $u_2$ is weakly $(t_2-2)$-stable and
    \item at least one of $u_3$ and $u_4$ is weakly $(t_2-2)$-stable.
\end{itemize}
If $u_i$ is weakly $(t_2-2)$-stable then it is also weakly $t_2$-stable with the same opinion --- here we apply the fact that its parent has the same opinion at time step $t_2-1$, 
Claim~\ref{cl:maintain_weak_value}, and Claim~\ref{cl:maintain_weak_stability}. So,
\begin{itemize}
    \item at least one of $u_1$ and $u_2$ is weakly $t_2$-stable with opinion opposite to $\xi$ and
    \item at least one of $u_3$ and $u_4$ is weakly $t_2$-stable with opinion opposite to $\xi$.
\end{itemize}
Due to Claim~\ref{cl:weak_stabilization}, this leads to $t_2$-stability and hence $t_1$-stability of $v_1$ --- a contradiction. Altogether, our upper bound on $p_1$ is as follows
\begin{equation}
p_1 \leq \frac{3}{4} \cdot (1 - p_{\mathrm{w}}(0)) \cdot \left(1-\left(1 - (1 - p_{\mathrm{w}}(0))^2\right)^2\right) \stackrel{\text{Claim}~\ref{cl:binary_calculations}}\leq 0.00064.
\label{eq:p1}
\end{equation}

{\bf Next, let us bound $p_2$.} Both children of $u$ have initial opinions opposite to $\xi$ with probability $1/4$. Next, both of them are not weakly 0-stable with probability at most $(1 - p_{\mathrm{w}}(0))^2$ since these two events are independent. Moreover, they are independent of the initial opinions of these two children. So, 
\begin{equation}
p_2 \leq \frac{1}{4} \cdot (1 - p_{\mathrm{w}}(0))^2 \leq 0.00141.
\label{eq:p2}
\end{equation}

{\bf Lastly, we bound $p_3$.} The proof strategy here is analogous to the proof of the bound for $p_1$. We outline the differences below. 

There are two ways to choose which child of $u$ plays the role of $v_1$ (and then the other child plays the role of $v_2$). In the same way as in the proof for $p_1$, we get that $h_{T}(v_2) \geq t_2$.

Let $\mathcal{Q}_3^*(\tau)$ denote the following event: there exists an extension $\tilde\xi_0$ of $\xi_0|_{V(T_u)}$ to the whole tree such that
\begin{itemize}
\item $\tilde\xi_t(v)= \tilde\xi_0(v)=\xi$ for all even $t\leq\tau$; 
\item both children of $u$ have initial opinion opposite to $\xi$;
\item some (random) child $v_1$ of $u$ is weakly $0$-stable;
\item the other child $v_2$ of $u$ has opinion $\xi$ at time step $\tau$ for the first time.
\end{itemize}

Let us note that the vertex $v_2$ from the definition of the event $\mathcal{Q}_3^*(\tau)$ is not weakly 0-stable, since otherwise it is even 0-stable by Claim~\ref{cl:weak_stabilization} and hence it does not change its opinion at time step $\tau$. So, labels $v_1$ and $v_2$ are uniquely defined by $\xi_0|_{V(T_u)}\in\mathcal{Q}_3^*(\tau)$.

Let $V(\tau)$ be the union of $V(T_{v_1})$, the set $\{u\}$, and the set of vertices of $T_{v_2}$ at distance at most $\tau$ from $v_2$. Then we may prove the following in the same way as for $p_1$:
$$\{\exists \tau\ \mathcal{Q}_3^*(\tau)\} = \bigsqcup_{\tau \in 2\mathbb{Z}_{>0}}\bigsqcup_{\chi \in \{-1,1\}^{V(\tau)}}\biggl[ \mathcal{Q}_3^*(\tau)\cap\{\xi_0|_{V(\tau)} = \chi\}\biggr].
$$ 

Next, let us note that the event $\mathcal{Q}_3$ implies the event $\{\exists \tau\ \mathcal{Q}_3^*(\tau)\}$. So, as for $p_1$,
\begin{align*}
\mathbb{P} (\mathcal{Q}_3) 
&= \sum_{\tau \in 2\mathbb{Z}_{>0}}\  \sum_{\chi \in \{-1, 1\}^{V(\tau)}} \mathbb{P} \left(\mathcal{Q}_3 \mid \mathcal{Q}_3^*(\tau)\cap \{\xi_0|_{V(\tau)} = \chi\}\right) \cdot \mathbb{P}\left(\mathcal{Q}_3^*(\tau)\cap \{\xi_0|_{V(\tau)} = \chi\}\right).
\end{align*}
Below, for all pairs $(\tau,\chi)$, we give the same upper bound $q_3$ on $\mathbb{P} \left(\mathcal{Q}_3 \mid \mathcal{Q}_3^*(\tau)\cap\{\xi_0|_{V(\tau)} = \chi\}\right)$. We determine the exact value of $q_3$ later. As soon as this bound is determined, we may conclude that
$$
\mathbb{P} (\mathcal{Q}_3) \leq q_3 \cdot \mathbb{P} (\exists \tau\ \mathcal{Q}_3^*(\tau)).
$$
As was mentioned above, the event $\{\exists \tau\ \mathcal{Q}_3^*(\tau)\}$ distinguishes between the two children of $u$ and implies that one of them ($v_2$) is not weakly 0-stable. In addition, it says that $\xi_0(v_1) = \xi_{0}(v_2) \neq \xi$. So,
$$\mathbb{P} (\exists \tau\ \mathcal{Q}_3^*(\tau)) \leq 1/4 \cdot 2 \cdot (1-p_{\mathrm{w}}(0)).$$

Let us finally prove that $q_3 \leq (1-p_{\mathrm{w}}(0))^4 $. In what follows, we condition all events on $\mathcal{Q}_3^*(\tau)\cap \{\xi_0|_{V(\tau)} = \chi\}$. In particular, under this condition, the random variable $t_2$ from the definition of the event $\mathcal{Q}_3$ gets the value $\tau$  since $\mathcal{Q}_3$ implies $\mathcal{Q}_3^*(t_2)$.

By induction over odd $t\leq t_2-1$ and by Claim~\ref{cl:maintain_weak_value}, we have that $\tilde\xi_{t+1}(v_1)=\tilde\xi_t(u)\neq\xi$ for all odd $t\leq t_2-1$. So, $\tilde\xi_{t_2-1}(u) \neq \xi$. Since $\tilde\xi_{t_2}(v_2) = \xi$, we get that all 4 grandchildren of $v_2$ have opinion $\xi$ at time step $t_2 - 2$. 

Applying the same argument as for $p_1$, we may assign to every grandchild $u_i$, $i\in[4]$, of $v_2$ a canonical descendant $w_i$ located at distance $t_2 - 2$ from $u_i$. Here, we again get that each $w_i$ is weakly 0-stable independently of all the other events. Due to Claim~\ref{cl:weak_rising}, $v_2$ is weakly $t_2$-stable if at least one of $w_i$ is weakly 0-stable, which contradicts $\mathcal{Q}_3$. 

We conclude that
\begin{equation}
p_3 \leq \frac{1}{4} \cdot 2 \cdot (1 - p_{\mathrm{w}}(0)) \cdot (1-p_{\mathrm{w}}(0)) ^4 \stackrel{\text{Claim}~\ref{cl:binary_calculations}}\leq 0.00001.
\label{eq:p3}
\end{equation}
From~\eqref{eq:p1},~\eqref{eq:p2},~and~\eqref{eq:p3}, we get the required lower bound $(1 - p_1 - p_2 - p_3)^2 > 0.9957$, completing the proof.
\end{proof}

\begin{claim}
For all large enough even integer $t$ the following holds. Fix any positive integer $h$ and a perfect binary tree of depth $h$. Let $\xi_0\in\{-1,1\}^{V(T)}$ be chosen uniformly at random. Then, for any non-leaf vertex $v \in V(T)$, $v \neq R$, the probability that 
\begin{itemize}
    \item there exists an extension $\tilde\xi_0$ of $\xi_0|_{V(T_v)}$ to the whole tree $T$ such that $\tilde \xi_{t}(v) = 1$;
    \item $v$ is not weakly $0$-stable w.r.t. $\xi_0$
\end{itemize}
is less than $0.038$.
\label{cl:strong_calculations}   
\end{claim}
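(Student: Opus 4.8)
The plan is to eliminate the existential quantifier in the first bullet by monotonicity, reduce the whole statement to an event depending only on $\xi_0|_{V(T_v)}$, and then split it by a sign-flip symmetry so that the dominant contribution is exactly $\tfrac12\,\mathbb P(v\text{ is not weakly }0\text{-stable})$, which Claim~\ref{cl:binary_calculations} already controls.

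\emph{Step 1: monotonicity and a frozen boundary.} Since $\xi_{s+1}(w)=\mathrm{sign}\sum_{x\in N(w)}\xi_s(x)$ is non-decreasing in each of its inputs, raising any initial opinion can only raise all later opinions. Hence among all extensions of $\xi_0|_{V(T_v)}$ the one maximising $\tilde\xi_t(v)$ is the extension $\tilde\xi^{+}$ assigning $+1$ to every vertex of $V(T)\setminus V(T_v)$, so the first bullet is equivalent to $\tilde\xi^{+}_t(v)=1$. Moreover, under $\tilde\xi^{+}$ the complement $V(T)\setminus V(T_v)$ stays frozen at $+1$ forever: the only complement vertex adjacent to $T_v$ is the parent $u$ of $v$, and at least two of its neighbours (its own parent and the sibling of $v$, or, when $u=R$, the two other children of $R$) lie in the complement and keep $u$ at $+1$ regardless of $v$, while every other complement vertex has all neighbours in the complement. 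Thus $u$ is permanently $+1$ and $v$ runs the dynamics on $T_v$ with a constant $+1$ signal from above. Symmetrically, let $\tilde\xi^{-}$ freeze the complement at $-1$. Taking $t$ past the deterministic stabilisation time, set $A^{+}:=\{\tilde\xi^{+}_t(v)=1\}$ and $A^{-}:=\{\tilde\xi^{-}_t(v)=-1\}$; monotonicity gives $\tilde\xi^{-}_t(v)\le\tilde\xi^{+}_t(v)$, so the combination $\tilde\xi^{-}_t(v)=+1,\ \tilde\xi^{+}_t(v)=-1$ is impossible.

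\emph{Step 2: a symmetric decomposition.} Write $W$ for the event ``$v$ is not weakly $0$-stable''; the target is $\mathbb P(A^{+}\cap W)$, and all three events depend only on $\xi_0|_{V(T_v)}$. The global sign flip $\xi_0\mapsto-\xi_0$ is measure preserving, fixes $W$ (weak $0$-stability commutes with negation), and interchanges $A^{+}$ with $A^{-}$. Partitioning $W$ by the pair $(\tilde\xi^{-}_t(v),\tilde\xi^{+}_t(v))$, the combination $(+1,-1)$ is excluded by Step~1, the flip swaps the classes $(+1,+1)$ and $(-1,-1)$ inside $W$, and fixes the class $\beta:=\{\tilde\xi^{-}_t(v)=-1,\ \tilde\xi^{+}_t(v)=+1\}$. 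A short computation then gives
$$
\mathbb P(A^{+}\cap W)=\tfrac12\,\mathbb P(W)+\tfrac12\,\mathbb P(W\cap\beta).
$$
By the definition of $p_{\mathrm w}(0)$ and Claim~\ref{cl:binary_calculations}, $\mathbb P(W)=1-p_{\mathrm w}(0,h,v)\le 1-p_{\mathrm w}(0)<0.075$, so $\tfrac12\,\mathbb P(W)<0.0375$. It therefore remains to show $\mathbb P(W\cap\beta)<0.001$, which yields the bound $0.038$.

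\emph{Step 3: the boundary-sensitive event (the main obstacle).} The event $\beta$ says the asymptotic opinion of $v$ genuinely depends on the boundary, i.e. $v$ is not decided by its own subtree; combined with $W$ this is a doubly degenerate configuration that I expect to be very rare. I would bound $\mathbb P(W\cap\beta)$ exactly as the probabilities $p_1,p_3$ are bounded in the proof of Claim~\ref{cl:weak_calculations}: trace the required changes of opinion downward from $v$ through its children, using Claim~\ref{cl:maintain_weak_value}, Claim~\ref{cl:weak_rising} and Claim~\ref{cl:weak_stabilization}, to pin down at least three vertices in pairwise disjoint subtrees each of which is forced to fail weak $0$-stability. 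By Remark~\ref{rm:independancy_general} these failures are independent, each of probability at most $1-p_{\mathrm w}(0)<0.075$, so $\mathbb P(W\cap\beta)$ is at most a controlled combinatorial factor times $(1-p_{\mathrm w}(0))^{3}<4.3\cdot 10^{-4}$. The hard part is precisely this last step: verifying that $W\cap\beta$ really forces enough independent failures and, as in Claim~\ref{cl:weak_calculations}, controlling the number of admissible ``cascade'' shapes (via a canonical-descendant choice that keeps the witnessing subtrees disjoint) so that the combinatorial factor does not overwhelm the gain from $(1-p_{\mathrm w}(0))^{3}$.
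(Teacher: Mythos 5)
Your Steps 1 and 2 are correct, and they constitute a genuinely different top-level organisation from the paper's. Monotonicity of majority dynamics does reduce the existential bullet to the all-plus frozen boundary (your verification that the complement stays at $+1$, including the case $u=R$, is sound), and the flip-symmetry identity $\mathbb{P}(A^{+}\cap W)=\tfrac12\,\mathbb{P}(W)+\tfrac12\,\mathbb{P}(W\cap\beta)$ checks out for every fixed even $t$. The paper instead bounds the target by $\left(1-\mathbb{P}(\mathcal{Q}_2\mid\mathcal{Q}_1)\right)\mathbb{P}(\mathcal{Q}_1)$ with $\mathcal{Q}_2=\{\xi_0(v)=1$ and every extension gives $\tilde\xi_t(v)=-1\}$; your factor $\tfrac12$ is exactly the symmetry factor $0.5$ in the paper's lower bound on $\mathbb{P}(\mathcal{Q}_2\mid\mathcal{Q}_1)$, and your event $\beta$ plays the role of $\neg\mathcal{Q}_2$ inside $\{\xi_0(v)=1\}$, so numerically the two decompositions are equivalent; yours has the virtue of making the $\tfrac12$ an exact identity rather than a conditional estimate. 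One slip in Step 1: the claim quantifies $t$ \emph{before} $h$, so you cannot take ``$t$ past the deterministic stabilisation time'' --- that threshold grows with $h$. This is repairable, since your identity holds for all even $t$, but it forces you to control, uniformly in $h$, the event that the first flip time $\tau$ of $v$ under the frozen boundary satisfies $\tau\geq t$; the paper spends genuine effort on exactly this (the $\mathcal{Q}^*$/$p(t)$ analysis, using that $p_{\mathrm{w}}(0,t)$ decreases to $p_{\mathrm{w}}(0)$, together with a deterministic descending-path argument showing $\tau\leq h_T(v)$ when $h_T(v)<t$).

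The genuine gap is Step 3, which you flag yourself: the estimate $\mathbb{P}(W\cap\beta)<0.001$ is not a residual technicality but essentially the entire content of the paper's proof. Concretely, the paper conditions on the first even flip time $\{\tau=s\}$ with $s<t$, observes that this event together with $W$ is measurable with respect to the tree $T_v$ pruned at depth $s$, extracts four canonical descendants $w_1',\dots,w_4'$ of the grandchildren of $v$ that are themselves pruned-tree measurable, so that their weak $0$-stability events are independent of the conditioning and of each other (Remark~\ref{rm:independancy_general}), and then upgrades two weakly $0$-stable non-sibling witnesses to strong $s$-stability of $v$ via Claims~\ref{cl:weak_rising}, \ref{cl:maintain_weak_stability}, and~\ref{cl:weak_stabilization}, which kills $A^{+}$ outright. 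Note that the structure this yields is not quite your ``three failures in pairwise disjoint subtrees'': the event $W$ lives on all of $T_v$ and is \emph{not} disjoint from the witnesses' subtrees, so independence must be rescued by the pruned-tree conditioning rather than by disjointness --- precisely the delicate measurability bookkeeping (as in the $\mathcal{Q}_1^*(\tau)$ decomposition of Claim~\ref{cl:weak_calculations}) that your sketch gestures at but does not perform. The correct count is $W$ (probability at most $1-p_{\mathrm{w}}(0)<0.075$ by Claim~\ref{cl:binary_calculations}) times an or-of-two-ands among the four witnesses (probability at most $1-\left(1-(1-p_{\mathrm{w}}(0))^2\right)^2\leq 0.0113$), and after the factor $2$ restoring the sign of $\xi_0(v)$ one lands near $8.4\cdot 10^{-4}<0.001$; so your target constant is attainable, but only by reconstructing the paper's cascade argument, which your proposal leaves unproven.
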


\begin{proof}
Let $\varepsilon>0$ be small enough and let
$t=t(\varepsilon)$ be a large even integer. Let $T$ be a perfect binary tree. Consider the following events:
\begin{itemize}
\item $\mathcal{Q}_1=\{v$ is not weakly 0-stable$\}$;

\item $\mathcal{Q}_2=\{\xi_0(v)=1$ and, for every extension $\tilde\xi_0$ of $\xi_0|_{V(T_v)}$ to the whole tree, $\tilde\xi_t(v)=-1\}$.
\end{itemize}

Then the probability of the desired event is at most
$$
 \mathbb{P}(\mathcal{Q}_1\wedge\neg\mathcal{Q}_2)=(1-\mathbb{P}(\mathcal{Q}_2\mid\mathcal{Q}_1))\mathbb{P}(\mathcal{Q}_1)\leq(1-\mathbb{P}(\mathcal{Q}_2\mid\mathcal{Q}_1))(1 - p_{\mathrm{w}}(0)).
$$
It remains to bound from below $\mathbb{P}(\mathcal{Q}_2\mid\mathcal{Q}_1)$.

 Consider $\tilde\xi_0: V(T) \rightarrow \{-1, 1\}$ such that 
\begin{itemize}
    \item $\tilde\xi_0(w) = \xi_0(w)$ for every $w \in V(T_v)$;
    \item $\tilde\xi_0(w) = 1$ for every $w \in V(T) \setminus V(T_v)$.
\end{itemize} 

If $v$ is not weakly 0-stable, then, due to the definition of weak 0-stability, there exists an even time step $\tau$ such that $\tilde\xi_{\tau}(v) = -1$ for the first time. If such a time step does not exist, let $\tau=\infty$. Consider the event $\mathcal{Q}^*=\mathcal{Q}^*(h(T_v),t)=\{\tau<t\}$. Let us observe that $\mathbb{P}(\mathcal{Q}^*\mid\mathcal{Q}_1)=1$ when $h(T_v)<t$. Let us show this by contradiction: suppose $\tau \geq t > h(T_v)$. The parent of $v$ has opinion $1$ at time step $\tau - 1$ due to the definition of $\tilde\xi_0$, so $v$ can change its opinion to $-1$ at time step $\tau$ only if it has a child that changes opinion to $-1$ at time step $\tau - 1$. Next, we proceed by induction: apply the same argument to this child, then to a specific grandchild, etc. So, since $h(T_v) < \tau$, there is  a descendant $u$ of $v$ with $d(u, v) = h(T_v)-1$ such that it changes opinion to $-1$ at time step $\tau - h(T_v) + 1 \geq 2$. However, this is impossible, since at time step $\tau - h(T_v) + 1$ vertex $u$ has the same opinion as both its leaf-children at time step $\tau - h(T_v)$ that is inherited from $u$ at time step $\tau - h(T_v) - 1 \geq 0$. So, $\tilde\xi_{\tau - h(T_v) + 1}(u) = \tilde\xi_{\tau - h(T_v) - 1}(u)$ --- a contradiction.

 Let $p(t)$ be the infimum of $\mathbb{P}(\mathcal{Q}^* | \mathcal{Q}_1)$ over all possible $h(T_v)$. We proved that $\mathbb{P}(\mathcal{Q}^*\mid\mathcal{Q}_1)=1$ when $h(T_v)<t$. On the other hand, for all $v$ and $T$ such that $h(T_v)\geq t$, $\mathbb{P}(\mathcal{Q}^*)$ does not depend on $h(T_v)$ and equals $1-p_{\mathrm{w}}(0, h, u)$ for any $h>t$ and any vertex $u$ with height $t$. So, $\mathbb{P}(\mathcal{Q}^*)=1-p_{\mathrm{w}}(0, t+1, u)=:1-p_{\mathrm{w}}(0, t)$. Note that $p_{\mathrm{w}}(0, t)$ is non-increasing and $\lim_{n\to\infty}p_{\mathrm{w}}(0, t)=p_{\mathrm{w}}(0)$. Let $t(\varepsilon)$ be so large that $1-p_{\mathrm{w}}(0, t)>(1-\varepsilon)(1-p_{\mathrm{w}}(0))$ for all $t\geq t(\varepsilon)$. We also have that $\mathcal{Q}^* \subseteq \mathcal{Q}_1$. So, since $\mathbb{P}(\mathcal{Q}_1) \leq 1 - p_{\mathrm{w}}(0)$, we get that, for all $t\geq t(\varepsilon)$, 
 $$
 p(t)=\inf_{v,T}\mathbb{P}(\mathcal{Q}^* | \mathcal{Q}_1)=\inf_{v:\,h(T_v)\geq t}\frac{\mathbb{P}(\mathcal{Q}^*)}{\mathbb{P}(\mathcal{Q}_1)}\geq\frac{\mathbb{P}(\mathcal{Q}^*)}{1 - p_{\mathrm{w}}(0)}\geq 1-\varepsilon.$$

  Now,
\begin{align*}
 \mathbb{P}(\mathcal{Q}_2\mid\mathcal{Q}_1)\geq
 \mathbb{P}(\mathcal{Q}_2\wedge\mathcal{Q}^*\mid\mathcal{Q}_1) &=
 \sum_{s<t} \mathbb{P}(\mathcal{Q}_2\wedge\{\tau=s\}\mid\mathcal{Q}_1)
\\
&= \sum_{s<t}\mathbb{P}(\mathcal{Q}_2\mid\{\tau=s\}\wedge\mathcal{Q}_1)\mathbb{P}(\tau=s\mid\mathcal{Q}_1)\\
&\geq p(t)\cdot\min_{s<t}\mathbb{P}(\mathcal{Q}_2\mid\{\tau=s\}\wedge\mathcal{Q}_1).
\end{align*}

Let us assume that $\mathcal{Q}_1\wedge\{\tau=s\}$ holds for some $s<t$. 
 For every grandchild $w$ of $v$, we have that $\tilde\xi_{s-2}(v) = -1$. 
 In the usual way (and in exactly the same way as in the proof of Claim~\ref{cl:weak_calculations}), we get that every grandchild $w$ of $v$ has a (canonical) descendant $w'=w'(w)$ such that
 \begin{itemize}
 \item $d(w, w') = s-2$, and 
 \item every vertex from the path between $w$ and $w'$ at distance $j$ from $w'$ has opinion $-1$ at time step $j$. 
 \end{itemize}
 We note that the four vertices $w'(w)$ are uniquely defined by the initial opinions of vertices from the tree $T_v$ pruned at depth $s$.

 Since validity of the event $\mathcal{Q}_1\wedge\{\tau=s\}$ as well as the vertices $w'$ are defined by the pruned tree, the events that $w'$ are weakly 0-stable are independent of each other and of the event $\mathcal{Q}_1\wedge\{\tau=s\}$.
 Moreover, weak 0-stability of any $w'$ implies weak $(s-2)$-stability of the corresponding $w$, due to Claim~\ref{cl:weak_rising}. Also, if at least two non-sibling grandchildren of $v$ are $(s-2)$-stable, then $v$ is strongly $s$-stable, by Claim~\ref{cl:maintain_weak_stability} and Claim~\ref{cl:weak_stabilization}. So, $\xi_t(v) = -1$, as $s < t$. 

We conclude that, for every $s$,
$$
\mathbb{P}(\mathcal{Q}_2\mid\{\tau=s\}\wedge\mathcal{Q}_1)\geq 0.5(1 - (1 - p_{\mathrm{w}}(0))^2)^2.
$$
Summing up, we get that the probability of the desired event from the statement of Claim~\ref{cl:strong_calculations} is at least 
$$
\left(1 - \frac{1}{2}(1-\varepsilon)(1 - (1 - p_{\mathrm{w}}(0))^2)^2\right)(1 - p_{\mathrm{w}}(0)) < 0.038,
$$
due to the choice of a  small enough $\varepsilon$ and due to Claim~\ref{cl:binary_calculations}.

\end{proof}

\subsubsection{Proof of Lemma~\ref{lm:binary_upper}}
\label{sc:4:3:2}


Let $p_+ = 0.01$, $p_- = 0.0513$. Let us choose $c_+<2/3$ large enough so that
\begin{equation}
 2^{h+d/2}(p_+ + p_-)^{d/2} = o(1)
\label{eq:L4.8_UB}
\end{equation}
for the biggest even number $d < \lceil c_+ h\rceil$. 

Let us note that if, for some vertex $v$, $\xi_d(v) \neq \xi_{d-2}(v)$ then the opinion $\xi_d(v)$ reached the vertex $v$ by traveling across some path $v_0 v_1 \ldots v_d=:v$, i.e. $\xi_i(v_i) = \xi_d(v_d)$ for every $i \in \{0,\ldots,d\}$ and $\xi_{i-2}(v_{i}) \neq \xi_d(v_d)$ for every $i \in \{2,\ldots,d\}$. Let us recall that, for some constant $C_1 > 0$, the number of paths of length $d$ is less then $2^{h+\frac{d}{2}} \cdot C_1$ (see, e.g.,~\cite[Lemma 5.7]{JMS}). 
We plan to prove that for some constant $C_2 > 0$ and any path $v_0 v_1 \ldots v_d$ the above-mentioned traveling happens with probability less than $C_2(p_+ + p_-)^{d/2}$. This will conclude the proof of Lemma~\ref{lm:binary_upper} due to~\eqref{eq:L4.8_UB}.


Consider a path $P=v_0 v_1 \ldots v_d := v$. Let us denote by $\mathcal{E}(P)$ the following event: 
$$
\mathcal{E}(P)=\biggl\{\text{
$\xi_i(v_i) = 1$ for every $i \in \{0,1,\ldots,d\}$ and $\xi_{i-2}(v_{i}) = -1$ for every $i \in \{2,\ldots,d\}$}\biggr\}.
$$

For every $i\in[d-1]$, denote by $u_i$ the neighbour of $v_i$ in $T$ that does not belong to $V(P)$. Let $t_0$ be a large enough even integer constant so that the conclusion of Claim~\ref{cl:strong_calculations} holds for all $t\geq t_0$. For every $i\in[d-1]$, we colour the children of $u_i$ in two colours --- red and blue. For $2i\in[d-4]$, let $A_{2i}$ be the set of all vertices that are either descendants\footnote{Hereinafter, a vertex is a descendant of itself.} of the blue child of $u_{2i}$, or descendants of $u_{2i+1}$, or descendants of the red child of $u_{2i+2}$. 
Let $\mathcal{I} \subset \{t_0 + 6, t_0+8, \ldots, d-4\}$ be the maximum set of even integers that satisfy the following condition: for every $i\in\mathcal{I}$, 
\begin{itemize}
    \item either $v_i$ is a grandchild of $v_{i+2}$ or $v_{i+2}$ is a grandchild of $v_{i}$;
    \item $\min(h_T(v_i), h_T(v_{i+2})) \geq t_0 + 6$.
\end{itemize}
The fact that $i \geq t_0 + 6$ allows us to apply Claim~\ref{cl:strong_calculations} 
 at time steps at least $i-4$. 
  Also, let us note that, as the sets $A_i,A_{i'}$, for $i\neq i'$, are disjoint, the probabilities of events defined by $\xi_0|_{A_i}$ and $\xi_0|_{A_{i'}}$ are independent due to Remark~\ref{rm:independancy_general}. Lastly, let us note that $||\mathcal{I}|-d/2|=O(1)$. 

\begin{figure}[h]
    \centering
    \includegraphics[scale=0.2]{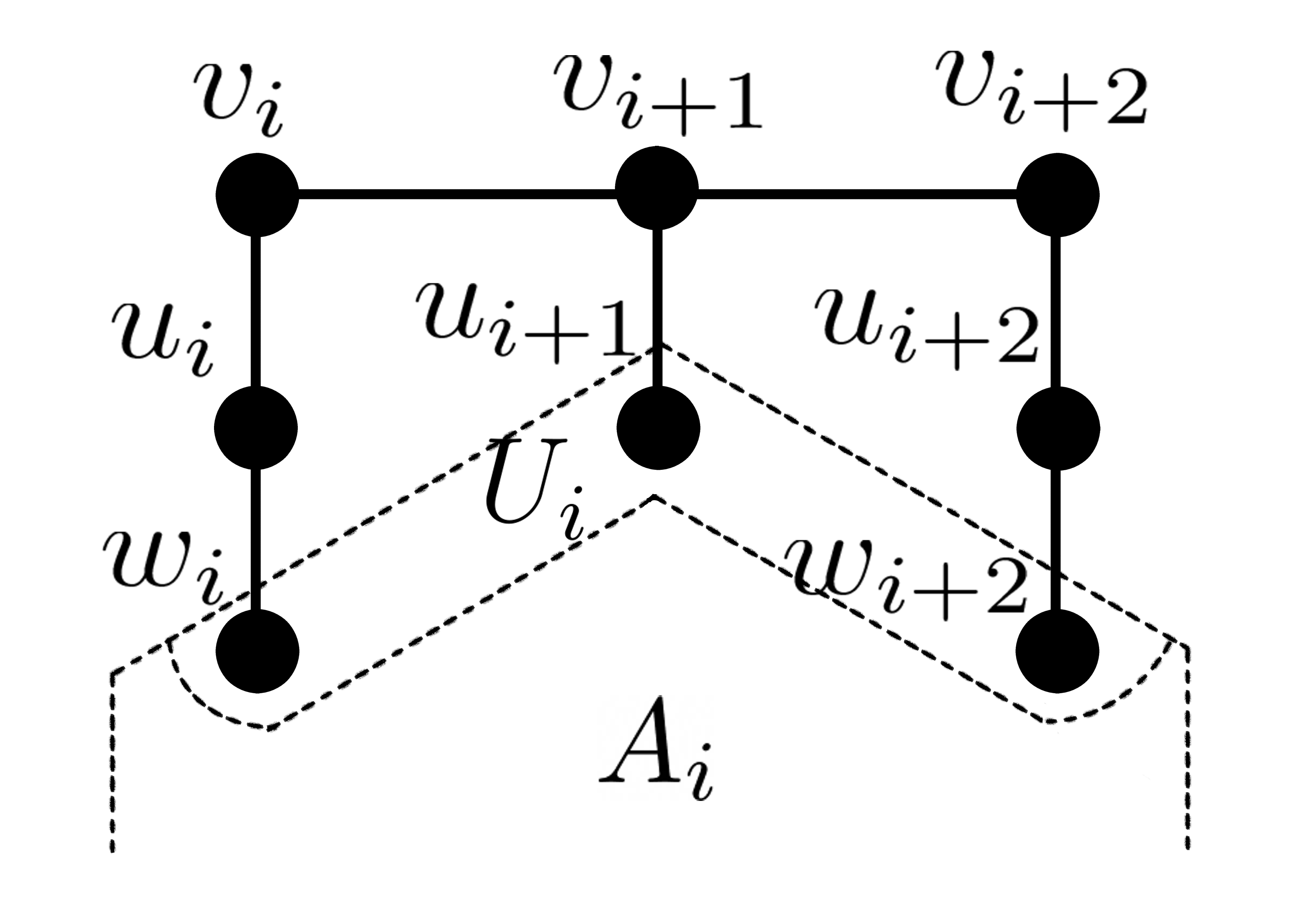}
    \caption{The structure of sets $A_i$ and $U_i$}
    \label{fig:A_i}
\end{figure}

We denote the number of vertices $u \in V(P)$ at even distance from $v_0$ such that $\xi_0(u)$ equals $1$ and $-1$ by $m_+$ and $m_-$, respectively. Fix an even $i \in [d-4]$ such that $A_i \in \mathcal{I}$. Denote the number of vertices $u \in \{v_i, v_{i+2}\}$ such that $\xi_0(u)$ equals $1$ and $-1$ by $k_+(i)$ and $k_-(i)$, respectively. Let the random variable $\Xi_i:=\Xi_i(\xi_0|_{V(P)})$ be the set of vectors of initial opinions of vertices from $A_i$ such that $\mathbb{P}(\mathcal{E}(P)\mid\xi_0|_{A_i}\in\Xi_i,\,\xi_0|_{V(P)})>0$. In the remaining part of the proof, we show that 
\begin{equation}
\mathbb{P}\left(\xi_0|_{A_i}\in\Xi_i\mid \xi_0|_{V(P)}\right)\leq 2\sqrt{p_+^{k_+(i)}p_-^{k_-(i)}}=:q(i).
\label{eq:prob_bouund_Xi_i}
\end{equation}
 This inequality completes the proof of Lemma~\ref{lm:binary_upper} since, due to Remark~\ref{rm:independancy_general},
\begin{align*}
 \mathbb{P}(\mathcal{E}(P)\mid \xi_0|_{V(P)}) &\leq
 \mathbb{P}\left(\bigcap_{i\in\mathcal{I}}\xi_0|_{A_i}\in\Xi_i\mid \xi_0|_{V(P)}\right)\\
 &\leq \prod_{i\in\mathcal{I}}q(i)=O(2^{d/2})\prod_{i\in\mathcal{I}}\sqrt{p_+^{k_+(i)}p_-^{k_-(i)}}=O\left(2^{d/2} p_+^{m_+}p_-^{m_-}\right).
\end{align*} 
The latter equality holds because each (but at most a bounded number) $i$ such that $\xi_0(v_i) = 1$ (or -1) contributes an additive 1 to both $k_+(i-2)$ and $k_+(i)$ (or $k_-(i-2)$ and $k_-(i)$) for $i-2,i\in\mathcal{I}$. So, since $||\mathcal{I}| - d/2| = O(1)$,
the product 
$$
\prod_{i\in\mathcal{I}}\sqrt{p_+^{k_+(i)}p_-^{k_-(i)}} = \sqrt{p_+^{\sum_{i\in\mathcal{I}}k_+(i)}p_-^{\sum_{i\in\mathcal{I}} k_-(i)}}
$$ 
differs from the $p_+^{m_+}p_-^{m_-}$ by a multiplicative constant.

Since all initial values on vertices along $P$ are equally likely, we conclude that 
\begin{align*}
\mathbb{P}(\mathcal{E}(P)) &=
\sum_{\mathbf{x}\in\{-1,1\}^{V(P)}}O\left(2^{d/2} p_+^{m_+(\mathbf{x})}p_-^{m_-(\mathbf{x})}\right)\mathbb{P}\left(\xi_0|_{V(P)}=\mathbf{x}\right)\\
&=O\left(\sum_{\mathbf{x}\in\{-1,1\}^{V(P)}}p_+^{m_+(\mathbf{x})}p_-^{m_-(\mathbf{x})}\right)\\
&=O\left(\sum_{i=0}^{|V(P)|}p_+^{i}p_-^{|V(P)|-i}\right)=O\left((p_+ + p_-)^{d/2}\right),
\end{align*}
as needed.
 


From now on, we assume that the vector $\xi_0|_{V(P)}$ is fixed and deterministic, we also fix an index $i \in \mathcal{I}$, and work only with the corresponding subset $A_i$. Our aim is to prove the inequality~\eqref{eq:prob_bouund_Xi_i}.  We denote the children of $u_i$ and $u_{i+2}$ that belong to $A_i$ by $w_i$ and $w_{i+2}$, respectively. Let 
$$
U_i:= \{w_i, u_{i+1}, w_{i+2}\}\subset A_i.
$$
 Let us define the event 
 $$
 \mathcal{E}_1(P) := \{\xi_{i - 2}(v_{i}) = \xi_{i}(v_{i+2}) = -1,\  \xi_{i}(v_{i}) = \xi_{i+2}(v_{i+2}) = \xi_{i-2}(w_i) = \xi_{i}(u_{i+1}) = \xi_{i}(w_{i+2}) = 1\}
 $$
and prove that it follows from $\mathcal{E}(P)$.
Indeed, the only non-trivial equalities that are not included in the definition of $\mathcal{E}(P)$ are $\xi_{i-2}(w_i) = \xi_{i}(u_{i+1}) = \xi_{i}(w_{i+2}) = 1$. Since $\xi_{i}(v_i) = 1 \neq \xi_{i-1}(v_{i+1})$, we get that $\xi_{i-1}(u_{i}) = 1$. It implies $\xi_{i-2}(w_{i}) = 1$, since $\xi_{i-2}(v_{i})=-1$. So, we get $\xi_{i-1}(u_i)=\xi_{i-2}(w_i)=1$. The remaining equalities $\xi_i(u_{i+1})=\xi_i(w_{i+2})=1$ can be proved in literally the same way by replacing, first, $i$ with $i+1$ and, then, $i$
 with $i+2$.

Thus, the event $\xi_0|_{A_i} \notin \Xi_i$ implies $\xi_0|_{A_i} \notin \Xi'_i$, where $\Xi'_i$ is the set of vectors of initial opinions of vertices from $A_i$ such that $\mathbb{P}(\mathcal{E}_1(P)\mid\xi_0|_{A_i}\in\Xi'_i)>0$ (recall that the initial opinions along $V(P)$ are fixed), and so it is sufficient to get an upper bound on the probability of the latter. In other words, we will bound from below the probability of the set of initial opinions of vertices from $A_i$ so that, whatever initial opinions of all other vertices, $\mathcal{E}_1(P)$ never holds. Let us now describe one of the main instruments that we will use to get such a lower bound. 
 Let 
$$
Q \in \mathcal{Q} := \{(w_{i}u_iv_iv_{i+1}u_{i+1}), \, (u_{i+1} v_{i+1} v_{i+2} u_{i+2} w_{i+2}), \, (w_{i} u_i v_i v_{i+1} v_{i+2} u_{i+2} w_{i+2})\}
$$ 
be a path. Let us show that 
\begin{align}
\text{if $Q$ satisfies either the requirements of Claim~\ref{cl:weak_stabilization} or the requirements of Claim~\ref{cl:one_far_stabilization}}\notag\\ 
\text{with some even $t\leq i-2$, then $\mathcal{E}_1(P)$ does not hold. }
\label{eq:proposition_E_1_contradiction_claim14}
\end{align}

First, let $Q = (w_{i} u_i v_i v_{i+1} v_{i+2} u_{i+2} w_{i+2})$. If $Q$ satisfies the requirements of Claim~\ref{cl:weak_stabilization} with some even $t\leq i-2$, then the conclusion of this claim --- $t$-stability of $w_i,v_i,v_{i+2},w_{i+2}$  --- contradicts to the inequality $\xi_i(v_{i+2}) \neq \xi_{i}(v_i)$, thus $\mathcal{E}_1(P)$ does not hold. If $Q$ satisfies the requirements of Claim~\ref{cl:one_far_stabilization} for some even $t\leq i-2$, then it asserts that either $\xi_i(v_{i+2}) = \xi_{i}(v_i)$ or $\xi_i(v_{i+2}) = \xi_i(w_{i+2})$. This is again not possible for $\xi_0\in\mathcal{E}_1(P)$. Now, let $Q = (u_{i+1} v_{i+1} v_{i+2} u_{i+2} w_{i+2})$. In the same way as above, the conclusion of
Claim~\ref{cl:weak_stabilization} for $t\leq i-2$ contradicts to the property $\xi_i(v_{i+2}) \neq \xi_{i}(u_{i+1})$ required by $\mathcal{E}_1(P)$; and the conclusion of  Claim~\ref{cl:one_far_stabilization} implies either $\xi_i(v_{i+2}) = \xi_{i}(u_{i+1})$ or $\xi_i(v_{i+2}) = \xi_i(w_{i+2})$, that contradicts to $\mathcal{E}_1(P)$ as well.
Finally, if the conclusion of Claim~\ref{cl:weak_stabilization} holds for  $Q = (w_{i} u_i v_i v_{i+1} u_{i+1})$, then 1) $\xi_{i-2}(v_i)$ equals at least one of $\xi_{i-2}(w_i)$, $\xi_{i-2}(u_{i+1})$, 2) $u_{i+1}$ is weakly $(i-2)$-stable. On the other hand, if $\mathcal{E}_1(P)$ holds as well, then $-1=\xi_{i-2}(v_i)\neq\xi_{i-2}(w_i)$. Therefore, $\xi_{i-2}(v_i)=\xi_{i-2}(u_{i+1})=-1$ and then $\xi_{i-1}(v_{i+1})=-1.$ By Claim~\ref{cl:maintain_weak_value}, since $u_{i+1}$ is weakly $(i-2)$-stable, we get $\xi_{i}(u_{i+1}) = -1$ --- contradiction with $\mathcal{E}_1(P)$.

\begin{remark}
All the events that we use below to bound $\mathbb{P}(\xi_0|_{A_i} \in \Xi'_i)$ depend on initial opinions of $A_i$ only. Therefore, it is convenient to have in mind the following distribution of initial opinions: 1) $\xi_0|_{V(P)}$ is fixed and deterministic; 2) $\xi_0|_{A_i}$ is uniformly random; 3) on all the other vertices, $\xi_0|_{V(T)\setminus(V(P)\cup A_i)}$ is a canonical vector, certifying $\mathcal{E}_1(P)$, if such a vector exists. In what follows, we denote this probability measure by $\mathbb{P}^*$. We have that $\mathbb{P}(\xi_0|_{A_i} \in \Xi'_i)=\mathbb{P}^*(\mathcal{E}_1(P))$. Therefore, if we manage to find an event $\mathcal{B}$ that depends only on initial opinions of $A_i$ such that $\mathcal{E}_1(P)$ implies $\mathcal{B}$, then $\mathbb{P}(\xi_0|_{A_i} \in \Xi'_i)=\mathbb{P}^*(\mathcal{E}_1(P))\leq\mathbb{P}^*(\mathcal{B})=\mathbb{P}(\mathcal{B})$. Thus, we may use an upper bound for $\mathbb{P}(\mathcal{B})$ to bound the desired  $\mathbb{P}(\xi_0|_{A_i} \in \Xi'_i)$.
\end{remark}



\paragraph{Proof of~\eqref{eq:prob_bouund_Xi_i}.} First of all, let 
$$
q_1=\mathbb{P}^*(\mathcal{E}_1(P)\wedge\text{not all vertices from $U_i$ are 1-close to stability}).
$$
Note that $\mathcal{E}_1(P)$ implies that none of the vertices from $U_i$ is strongly 0-stable with initial opinion $-1$. Therefore, due to Claim~\ref{cl:binary_calculations} and Claim~\ref{cl:weak_calculations},
\begin{align*}
q_1 &\leq \prod_{u \in U_i} \mathbb{P}^*(u \text{ is not strongly }0\text{-stable with }\xi_0(i)=-1)\\ 
&\quad\quad - \prod_{u \in U_i} \mathbb{P}^*(u \text{ is not strongly }0\text{-stable with }\xi_0(i)=-1, \text{ but is }1\text{-close to stability}) \\
&= \prod_{u \in U_i} \mathbb{P}(u \text{ is not strongly }0\text{-stable with }\xi_0(i)=-1)\\ 
&\quad\quad - \prod_{u \in U_i} \mathbb{P}(u \text{ is not strongly }0\text{-stable with }\xi_0(i)=-1, \text{ but is }1\text{-close to stability}) \\
&\leq \left(1-\frac{p_{\mathrm{s}}(0)}{2}\right)^3 - \left(1-\frac{p_{\mathrm{s}}(0)}{2} - (1 - 0.9957)\right)^3 < 0.0073.    
\end{align*}
Let 
$$
q_2=\mathbb{P}^*(\mathcal{E}
_1(P)\wedge\{\text{at least two vertices from $U_i$ are not weakly 0-stable}\}).
$$
Since $i\geq t_0+6$, we can apply Claim~\ref{cl:strong_calculations} to any vertex from $U_i$: the probability that $\mathcal{E}_1(P)$ holds and a fixed vertex $u\in U_i$ is weakly 0-stable is at most 0.038. Thus, due to Claim~\ref{cl:strong_calculations},
$$q_2 \leq 3 \cdot 0.038^2 < 0.0044.$$

We conclude that
$$
 \mathbb{P}^*(\mathcal{E}_1(P))=q_1+q_2+q_3<q_3+0.0117,
$$
where $q_3=\mathbb{P}^*(\mathcal{E}_2(P))$ and
\begin{align}
\mathcal{E}_2(P)=\mathcal{E}_1(P)&\wedge\{\text{every vertex from $U_i$ is 1-close to stability}\}\notag\\
&\wedge\{\text{the number of not weakly 0-stable vertices in $U_i$ does not exceed $1$\}}.
\label{eq:q3-def}
\end{align}
We bound $q_3$ separately for several different cases depending on the value of $\xi_0(v_i) + \xi_0(v_{i+2})$. For the case $\xi_0(v_i) = \xi_0(v_{i+2})$, we will need an auxiliary proposition.

\begin{claim}
Let $\xi_0(v_i) = \xi_0(v_{i+2}) =: \xi$. 
If
\begin{equation}
\text{every vertex from $U_i$ is weakly 0-stable and 1-close to stability},
\label{eq:assumption_stab}
\end{equation}
then 
\begin{equation}
\text{for every $u\in U_i$ and every even time step $t \leq i-2$, $\xi_t(u) \neq \xi$.}
\label{eq:corollary_stab}
\end{equation}
\label{cl:12_from_11}
\end{claim}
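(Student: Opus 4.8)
The plan is to argue by contradiction, keeping in mind that this claim is invoked inside the bound on $q_3=\mathbb{P}^*(\mathcal{E}_2(P))$, so that the ambient event $\mathcal{E}_1(P)$ is in force and in particular $v_i$ and $v_{i+2}$ are \emph{not} allowed to keep opinion $\xi$ forever (under $\mathcal{E}_1(P)$ each of them attains the opposite opinion $-\xi$ at one of the times $i-2,i,i+2$). First I would pin down the local geometry. Since $i\in\mathcal{I}$, one of $v_i,v_{i+2}$ is a grandparent of the other; assume without loss of generality that $v_{i+2}$ is a grandchild of $v_i$, the opposite case being symmetric under swapping the two ends. Unwinding the definitions of $u_i,u_{i+1},u_{i+2}$ and of the colours selecting $w_i,w_{i+2}$, I would record that $w_i$ is a grandchild of $v_i$ through the off-path vertex $u_i$, that $u_{i+1}$ is a grandchild of $v_i$ through $v_{i+1}$, and that $w_{i+2}$ is a grandchild of $v_{i+2}$. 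In particular $w_i$ and $u_{i+1}$ lie in disjoint subtrees of $v_i$, so the path $w_i\,u_i\,v_i\,v_{i+1}\,u_{i+1}$ is non-monotone, of even length, with $v_i$ at its midpoint. A short check (on whether the on-path neighbour $v_{i-1}$ is the parent or a child of $v_i$) is needed to justify these parentages.

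Next I would set up an extremal/inductive argument: suppose \eqref{eq:corollary_stab} fails and let $t^\ast\le i-2$ be the least even time at which some $u^\ast\in U_i$ satisfies $\xi_{t^\ast}(u^\ast)=\xi$. If $t^\ast=0$, then $\xi_0(u^\ast)=\xi$ and $u^\ast$ is weakly $0$-stable with opinion $\xi$ by the hypothesis~\eqref{eq:assumption_stab}. If $t^\ast>0$, then $t^\ast$ is the first even moment at which $u^\ast$ flips, necessarily to $\xi$, so the hypothesis that $u^\ast$ is $1$-close to stability upgrades this to weak $t^\ast$-stability of $u^\ast$ with opinion $\xi$. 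In either case $u^\ast$ is weakly $t^\ast$-stable with opinion $\xi$ for some even $t^\ast\le i-2$, and by minimality all vertices of $U_i$ carry opinion $-\xi$ at every even time strictly below $t^\ast$.

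The heart of the argument is then to lift this stability to the grandparent $v^\ast\in\{v_i,v_{i+2}\}$ of $u^\ast$ and to manufacture genuine stabilisation. Using that $\xi_0(v^\ast)=\xi$ and (via the minimality of $t^\ast$) that $v^\ast$ still carries opinion $\xi$ at time $t^\ast$, I would apply Claim~\ref{cl:weak_grandparent} to turn weak $t^\ast$-stability of $u^\ast$ into weak $t^\ast$-stability of $v^\ast$, and then pair $v^\ast$ with the second, disjoint, weakly stable grandchild of $v^\ast$ carrying opinion $\xi$ along the non-monotone midpoint path and invoke Claim~\ref{cl:weak_stabilization}. This yields that $v^\ast$ is genuinely $t^\ast$-stable with opinion $\xi$, hence $\xi_s(v^\ast)=\xi$ for every even $s\ge t^\ast$. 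This contradicts the behaviour of $v^\ast$ under $\mathcal{E}_1(P)$ (for $\xi=1$ it clashes with the required $\xi_{i-2}(v_i)=-1$ or $\xi_i(v_{i+2})=-1$, and for $\xi=-1$ with the required $\xi_i(v_i)=1$ or $\xi_{i+2}(v_{i+2})=1$). The contradiction shows that no such $t^\ast$ exists, which is precisely~\eqref{eq:corollary_stab}.

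The step I expect to be the main obstacle is exactly the transfer in the third paragraph: verifying that $v^\ast$ still holds opinion $\xi$ at the critical time $t^\ast$ (so that Claim~\ref{cl:weak_grandparent} is legitimate) and that a second weakly stable grandchild of $v^\ast$ with opinion $\xi$ is available in a disjoint subtree to feed Claim~\ref{cl:weak_stabilization}. Here the minimality of $t^\ast$ and Claim~\ref{cl:maintain_weak_stability} together with Claim~\ref{cl:maintain_weak_value} should control the intermediate opinions, but the bookkeeping must be carried out uniformly across the case split (which end is the grandparent, and $t^\ast=0$ versus $t^\ast>0$), and the final contradiction genuinely relies on $v_i,v_{i+2}$ not staying at $\xi$, i.e. on the ambient event $\mathcal{E}_1(P)$ in which the claim is used.
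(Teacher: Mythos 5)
Your identification of the ambient event is right --- the claim is only meaningful (and only used) under $\mathcal{E}_1(P)$, and the paper's proof likewise derives all its contradictions against $\mathcal{E}_1(P)$ via~\eqref{eq:proposition_E_1_contradiction_claim14}; your $t^\ast=0$ branch also essentially matches the paper's opening step. But the core of your argument, the $t^\ast>0$ branch, has two genuine gaps. First, your minimal time $t^\ast$ ranges only over $U_i$, so it gives no control whatsoever over $v_i$ and $v_{i+2}$: the assertion that ``$v^\ast$ still carries opinion $\xi$ at time $t^\ast$ by minimality'' is unjustified, and under $\mathcal{E}_1(P)$ it is false in live scenarios --- $\mathcal{E}_1(P)$ forces $\xi_{i-2}(v_i)=-1$ and $\xi_i(v_{i+2})=-1$, so for $\xi=1$ the intended grandparent may well have flipped at or before $t^\ast\leq i-2$ (e.g.\ $t^\ast=i-2$ already kills the hypothesis $\xi_{t^\ast}(v^\ast)=\xi_{t^\ast}(u^\ast)$ of Claim~\ref{cl:weak_grandparent}). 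This is precisely why the paper takes its first-flip time $t_1$ over $U_i\sqcup\{v_i,v_{i+2}\}$ and then case-splits on which vertex flips first. Second, even where the lift to $v^\ast$ is legitimate, your final step cannot be fed: Claim~\ref{cl:weak_stabilization} needs a \emph{second} weakly $t^\ast$-stable vertex holding opinion $\xi$ at time $t^\ast$ at the far end of a non-monotone path, but by your own minimality every other vertex of $U_i$ holds $-\xi$ at all even times up to $t^\ast$, so no such partner exists; and you cannot use $v^\ast$ itself as an end of a short path down to $u^\ast$, since a grandchild-to-grandparent path is monotone ($T_{u^\ast}\subset T_{v^\ast}$) and Claim~\ref{cl:weak_stabilization} requires $T_{v_0}\cap T_{v_d}=\varnothing$. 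You are thus stuck in the mixed-opinion regime where only Claim~\ref{cl:one_far_stabilization} applies, and its weaker conclusion forces one to track flips \emph{after} $t^\ast$ --- exactly the content of the paper's lengthy analysis with auxiliary times $t_2,t_3,t_4$ and the separate treatments of $v=u_{i+1}$, $v=w_i$, $v=w_{i+2}$, for which your sketch offers no substitute.

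There is also a geometric failure that your ``short check'' would expose rather than repair: nothing in the definition of $\mathcal{I}$ prevents the path $P$ from turning at $v_i$, i.e.\ $v_{i-1}$ being a child of $v_i$ while $v_{i+2}$ is a grandchild of $v_i$. In that configuration $u_i$ is the \emph{parent} of $v_i$ and $w_i$ is a sibling of $v_i$, not a grandchild, so Claim~\ref{cl:weak_grandparent} is simply inapplicable to the pair $(w_i,v_i)$; your proposed WLOG (swapping the two ends) does not cover this, and in any case $\mathcal{E}_1(P)$ is not symmetric under $i\leftrightarrow i+2$ (the paper's case 3 needs explicit index shifts $i-2\to i$, $i-4\to i-2$). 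It is telling that the paper's proof never lifts weak stability to $v_i$ or $v_{i+2}$ at all: it keeps every path end inside $U_i$ (the three paths in $\mathcal{Q}$), where weak $0$-stability and $1$-closeness are actually assumed, and propagates opinions along these paths with Claims~\ref{cl:maintain_weak_value} and~\ref{cl:maintain_weak_stability} before closing with~\eqref{eq:proposition_E_1_contradiction_claim14}. So the strategy, not just the bookkeeping, needs to change: the grandparent-lifting route collapses exactly at the step you flagged as the main obstacle.
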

 

\begin{proof}

 First, suppose that $\xi_0(u) = \xi$ holds for some $u \in U$. Let $Q \in \mathcal{Q}$ be a path such that one of its ends coincides with $u$. Depending on the opinion of the vertex at the other end of $Q$, we get the requirements of either Claim~\ref{cl:weak_stabilization} or Claim~\ref{cl:one_far_stabilization}. As we already know, this contradicts $\mathcal{E}_1(P)$.


Next, suppose that $\xi_t(u)=\xi$ for some even $2\leq t\leq i-2$. The existence of such $t$ and the fact that all $u\in U_i$ have $\xi_0(u)\neq\xi$ implies that some $u\in U_i$ changes its opinion at least once before the time $i-2$. Let $t_1 \leq i-2$ be the first even time step such that some $v\in U_i\sqcup\{v_i, v_{i+2}\}$ has opinion different from its initial opinion $\xi_0(v)$. Suppose that none of the vertices from $U_i$ changed their opinion at time step $t_1$. Then, both $v$ and the two vertices from $U_i$ at distance 2 from $v$ share the opinion opposite to $\xi$ at time step $t_1$; those three vertices belong to the single path $Q \in \mathcal{Q}$ and the requirements from Claim~\ref{cl:weak_stabilization} are satisfied by $Q$ at time step $t_1$ --- contradiction again. So, now we may assume that $v \in U_i$. 

Let us separately consider all three possible values of $v$. 

{\bf 1.} First, assume $v=u_{i+1}$, or, in other words, $\xi_{t_1}(u_{i+1}) = \xi$. Let $t_2 > t_1$ be the first even time step that $\xi_{t_2}(u_{i+1}) \neq \xi$ ($t_2$ might be $+\infty$). Assume that, for some even time step $t \in \{t_1, t_1 + 2, \ldots, t_2\}$,  $\xi_t(w_i) = \xi$ for the first time. Our aim is to apply Claim~\ref{cl:weak_stabilization} at time step $t$ to the path between vertices $w_i$ and $u_{i+1}$. For this, we have to show that 
\begin{enumerate}
\item  $\xi_t(v_i)=\xi_t(u_{i+1})=\xi$ and
\item $w_i,u_{i+1}$ are weakly $t$-stable.
\end{enumerate}
We first note that $w_i$ is weakly $(t-2)$-stable, due to assumption~\eqref{eq:assumption_stab} and  Claim~\ref{cl:maintain_weak_stability}. So, $\xi_{t-1}(u_i) = \xi$ by Claim~\ref{cl:maintain_weak_value} and hence $ \xi_{t-2}(v_i) = \xi$. We get $\xi_{t-1}(v_{i+1}) = \xi$ --- for $t>t_1$ it follows from $\xi_{t-2}(u_{i+1})=\xi$ and for $t=t_1$ it follows from $\xi_{t-2}(v_{i+2}) = \xi$ (recall that $t_1$ is the first time moment when a vertex from $U_i\sqcup\{v_i,v_{i+2}\}$ changed its opinion). Therefore, $\xi_{t}(v_i) = \xi$, since $\xi_{t-1}(u_i) = \xi_{t-1}(v_{i+1}) = \xi$, as needed. Let us remind, that by assumption~\eqref{eq:assumption_stab}, $u_{i+1}$ is 1-close to stability, so it is weakly $t_1$-stable. Then, by Claim~\ref{cl:maintain_weak_stability}, it is weakly $(t-2)$-stable. So, $\xi_t(u_{i+1}) = \xi$ by Claim~\ref{cl:maintain_weak_value}, since $\xi_{t-1}(v_{i+1}) = \xi$, completing verification of the first condition that we had to check in order to apply Claim~\ref{cl:weak_stabilization}. We can also conclude that $t<t_2$. Moreover, $w_i$ is weakly $t$-stable, since it is 1-close to stability by assumption~\eqref{eq:assumption_stab}, and $u_{i+1}$ is weakly $t$-stable by Claim~\ref{cl:maintain_weak_stability}. Hence, indeed, we can apply Claim~\ref{cl:weak_stabilization} with time step $t$ to the path $Q$ from $w_i$ to $u_{i+1}$. Then, if $t \leq i-2$, we get
a contradiction with $\mathcal{E}_1(P)$ due to~\eqref{eq:proposition_E_1_contradiction_claim14}.
  If $i \geq t_1$, then $\xi_{i-2}(w_{i}) \neq \xi = \xi_{i}(u_{i+1})$, contradicting $\mathcal{E}_1(P)$. So $i < t_1$, completing the proof of~\eqref{eq:corollary_stab}.

{\bf 2.} Let $v = w_{i}$. By assumption~\eqref{eq:assumption_stab} and Claim~\ref{cl:maintain_weak_stability}, $w_i$ is weakly $(t_1-2)$-stable. Since $\xi_{t_1}(w_i) = \xi \neq \xi_{t_1-2}(w_i)$, we derive $\xi_{t_1-1}(u_{i}) = \xi$ due to Claim~\ref{cl:maintain_weak_value}. Let us recall that $t_1$ is the first moment when some vertex from $U_i\sqcup\{v_i,v_{i+2}\}$ changes its opinion. Therefore, $\xi_{t_1-2}(v_i)=\xi_{t_1-2}(v_{i+2})=\xi$ and hence $\xi_{t_1-1}(v_{i+1})=\xi$. Since $v_{i+1}$ and $u_i$ are neighbours of $v_i$, we get $\xi_{t_1}(v_i) = \xi$. However, $\mathcal{E}_1(P)$ implies that, at time moment $i-2$, $\xi_{i-2}(w_i) \neq \xi_{i-2}(v_i)$, in contrast to time moment $t_1$. Recalling that $t_1\leq i-2$, we get that at least one of $v_i, w_i$ should change its opinion at time step not exceeding $i-2$. Note that $w_i$ is weakly $t_1$-stable since it is 1-close to stability by~\eqref{eq:assumption_stab}. Let $t_2$ be the first even time moment after $t_1$ when $w_i$ changes its opinion again (it can be infinite). Assuming that $t_2<\infty$, we get that $w_i$ is weakly $(t_2-2)$-stable due to Claim~\ref{cl:maintain_weak_stability}. So, by Claim~\ref{cl:maintain_weak_value}, $\xi_{t_2-1}(u_i) \neq \xi$. But then $\xi_{t_2-2}(v_i)\neq\xi$ as well since $\xi_{t_2-2}(w_i)=\xi$ and $v_i,w_i$ are neighbours of $u_i$. Thus, $v_i$ changes its opinion at some even step $t_3\in[t_1+2,t_2-2]$. Due to the above conclusion, we also get that $t_3\leq i-2$. The neighbour $u_i$ of $v_i$ at any odd time step between $t_1+1$ and $t_3-1$ has opinion $\xi$ since vertices $v_2,w_i$ agree on $[t_1,t_3-2]$. Thus, $\xi_{t_3-1}(v_{i+1})\neq\xi$, implying $\xi_{t_3-2}(v_{i+2})\neq\xi$ (recall that $\xi_{t_3-2}(v_i)=\xi$). Since $t_1$ is the first  moment when some vertex from $U_i\sqcup\{v_i,v_{i+2}\}$ changes its opinion, we get that $v_{i+2}$ changes opinion for the first time at some even time step $t_4 \in \{t_1, t_1 + 2, \ldots, i-4\}$.

To summarise, we get the following dynamics of opinions of vertices $w_i,v_i,v_{i+2}$ at even time moments from $[0,t_4]$: the vertex $w_i$ changes its opinion once at time $t_1\leq t_4$ to $\xi$, the vertex $v_i$ holds the opinion $\xi$ throughout the entire time interval, and the vertex $v_{i+2}$ changes its opinion once to $-\xi$ at time $t_4$. Let us now observe the behaviour of opinions of vertices $u_{i+1}$ and $w_{i+2}$ within the same time interval. It will allow us to arrive to a contradiction with $\mathcal{E}_1(P)$ in a similar way to the case {\bf 1} (when $v=u_{i+1}$) due to Claim~\ref{cl:weak_stabilization}. We consider three scenarios.

\begin{itemize}

\item  Assume that $u_{i+1}$ changes its opinion for the first time at some even time step $t \leq t_4$. Here, we may apply Claim~\ref{cl:weak_stabilization} at time step $t$ to the path $Q$ between $w_i$ and $u_{i+1}$: First, since $t \leq t_4$, as we have already proved above, $w_i$ and $v_i$ preserve opinion $\xi$ up to time step $t$. This means that $\xi_{t}(w_i) = \xi_{t}(v_i) = \xi_{t}(u_{i+1})$ and that $w_i$ is weakly $t$-stable by Claim~\ref{cl:maintain_weak_stability} (since it is 1-close to stability by~\eqref{eq:assumption_stab}). Finally, $u_{i+1}$ is weakly $t$-stable as well since it is 1-close to stability by~\eqref{eq:assumption_stab}. Applying Claim~\ref{cl:weak_stabilization} we get a contradiction with $\mathcal{E}_1(P)$ due to~\eqref{eq:proposition_E_1_contradiction_claim14}.

\item Assume that both $u_{i+1},w_{i+2}$ do not change their opinion in any even time step from $[0,t_4]$. As above, we apply Claim~\ref{cl:weak_stabilization} to the path $Q$ from $u_{i+1}$ to $w_{i+2}$ and get a contradiction with $\mathcal{E}_1(P)$ due to~\eqref{eq:proposition_E_1_contradiction_claim14}. Indeed,
 $\xi_{t_4}(w_{i+2}) = \xi_{t_4}(v_{i+2}) = \xi_{t_4}(u_{i+1}) \neq \xi$ 
  and both $u_{i+1}$ and $w_{i+2}$ are weakly $t_4$-stable due to Claim~\ref{cl:maintain_weak_stability} and weak 0-stability that is assumed in~\eqref{eq:assumption_stab}.

\item Assume that $u_{i+1}$ does not change its opinion in any even time step from $[0,t_4]$ but $w_{i+2}$ changes its opinion at some time step $t_1\leq t<t_4$. Here we have that $\xi_{t}(w_i) = \xi_{t}(v_i) = \xi_{t}(v_{i+2}) = \xi_{t}(w_{i+2}) = \xi$ and both $w_i$ and $w_{i+2}$ are  weakly $t$-stable due to Claim~\ref{cl:maintain_weak_stability} and the property of being 1-close to stability assumed in~\eqref{eq:assumption_stab}. We can apply Claim~\ref{cl:weak_stabilization} at time step $t$ to the path $Q$ between $w_{i}$ to $w_{i+2}$  and get a contradiction with $\mathcal{E}_1(P)$ due to~\eqref{eq:proposition_E_1_contradiction_claim14}.


\item Finally, assume that $u_{i+1}$ does not change its opinion in any even time step from $[0,t_4]$ and $t_4$ is the first even time step from $[0,t_4]$ when $w_{i+2}$ changes its opinion. By Claim~\ref{cl:maintain_weak_stability} and due to~\eqref{eq:assumption_stab}, $w_{i+2}$ is weakly $(t_4-2)$-stable. So, by Claim~\ref{cl:maintain_weak_value}, $\xi_{t_4-1}(u_{i+2}) = \xi$. Since $\xi_{t_4-1}(v_{i+1}) = \xi$, we get a contradiction with $\xi_{t_4}(v_{i+2}) \neq \xi$.

\end{itemize}

{\bf 3.} In case $v = w_{i+2}$ we can repeat the reasoning from the previous case, with the following amendments. First, everywhere in the proof we make the following replacements: $w_i\leftrightarrow w_{i+2}$, $u_i\leftrightarrow u_{i+2}$, $v_i\leftrightarrow v_{i+2}$.
  As in the previous case, we first get $\xi_{t_1}(w_{i+2}) = \xi_{t_1}(v_{i+2})$. Nevertheless, $\mathcal{E}_1(P)$ implies that $\xi_{i}(w_{i+2}) \neq \xi_{i}(v_{i+2})$. Recalling that $t_1 \leq i-2$, at least one of $\{v_{i+2}, w_{i+2}\}$ should change its opinion at time step not exceeding $i$. In the remaining part of the proof it remains to make the following replacements: $i-2 \to i$ and $i-4\to i-2$. Since now $t_4 \leq i-2$, in the same way as in the case ``$v=w_i$'', we can apply~\eqref{eq:proposition_E_1_contradiction_claim14} to get a contradiction with $\mathcal{E}_1(P)$. This completes the proof of the claim.
  
  \end{proof} 


In what follows, assuming that all vertices of $U_i$ are 1-close to stability, we will get a contradiction with $\mathcal{E}_1(P)$ in some of the considered cases due to the observation~\eqref{eq:proposition_E_1_contradiction_claim14}. For the sake of convenience, let us list the conditions a path $Q \in \mathcal{Q}$ has to satisfy in order to meet the requirements of Claim~\ref{cl:weak_stabilization} and Claim~\ref{cl:one_far_stabilization} applied to this path. To apply Claim~\ref{cl:weak_stabilization} at time step $t$ we need to verify that
\begin{itemize}
    \item at time $t$, every second vertex (starting from any end) of $Q$ has the same opinion;
    \item both ends of $Q$ are weakly $t$-stable.
\end{itemize}
To apply Claim~\ref{cl:one_far_stabilization} at time step $t$ we need to verify that $Q$ can be partitioned into two subpaths so that
\begin{itemize}
    \item both ends of $Q$ are weakly $t$-stable and have different opinions at time $t$;
    \item at time $t$, every second vertex $u$ of $Q$ (starting from any end) has the same opinion as the end of $Q$ that belongs to the same subpath as $u$.
\end{itemize}

\paragraph{Consider the case $\xi_0(v_i) = \xi_0(v_{i+2}) = 1$.} 
 First, notice that if $\mathcal{E}_2(P)$ holds and all three vertices from $U_i$ are weakly 0-stable then~\eqref{eq:assumption_stab} holds. Then, due to Claim~\ref{cl:12_from_11}, $\xi_{i-2}(w_{i})=-1$, which contradicts  $\mathcal{E}_1(P)$.

 Now, let $\mathcal{E}_2(P)$ hold and there be a unique vertex $u \in U_i$ which is not weakly 0-stable. Then, let us prove that, in order to satisfy $\mathcal{E}_1(P)$, the following properties must hold:
\begin{itemize}
    \item[A1] the two vertices from $U_i\setminus\{u\}$ have initial opinion $-1$;
    \item[A2] the two vertices from $U_i\setminus\{u\}$ are not strongly 0-stable;
    \item[A3] for every vertex from $U_i\setminus\{u\}$, at least one of its grandchildren has initial opinion $-1$;
    \item[A4]  $\xi_t(u) = 1$ for some deterministic (that does not depend on $\xi_0\in\mathcal{E}_1(P)$) time step $t := t(u,i) \in \{i-2,i\}$.
\end{itemize}

A1 follows from~\eqref{eq:proposition_E_1_contradiction_claim14} and the fact that 
Claim~\ref{cl:weak_stabilization} and Claim~\ref{cl:one_far_stabilization} are applicable to the path between the two vertices from $U_i\setminus\{u\}$ and time moment $t=0$. Indeed, if both initial opinions are~$1$, then the considered path clearly satisfies the requirements of Claim~\ref{cl:weak_stabilization}. If the initial opinions are different, then the only non-trivial case that may interfere the requirements of Claim~\ref{cl:one_far_stabilization} is when $u=u_{i+1}$ and the path between the remaining two vertices from $U_i$ has seven vertices. However, since we assumed that $\xi_0(v_i)=\xi_0(v_{i+2})$, the initial opinions of $w_i,v_i,v_{i+2},w_{i+2}$ along the path change only once, as required. The event $\mathcal{E}_1(P)$ along with A1 immediately imply A2. Assuming that some $u'\in U_i\setminus\{u\}$ has all four grandchildren with initial opinion 1 and recalling that $\xi_0(u')=-1$ due to A1, we get that $u'$ is not weakly 0-stable. This proves A3. A4 is an instant corollary of $\mathcal{E}_1(P)$ --- actually, $t$ is either $i-2$ or $i$ depending on the choice of $u\in U_i$. Due to Claim~\ref{cl:strong_calculations}, the probability that $u \text{ is not weakly }0\text{-stable and A4 holds}$ can be bounded from above by the probability of an event that only depends on $\xi_0|_{V(T_u)}$ which is, in turn, at most $0.038$. Hence, by Remark~\ref{rm:independancy_general} the latter event is independent from the events A1, A2, and A3. Recalling that, for any $u' \in U_i\setminus\{u\}$, the event that $u'$ is strongly 0-stable implies A3,  we get
\begin{align}
q_3 &\leq 3 \cdot \mathbb{P}^*(\{u \text{ is not weakly }0\text{-stable}\}\wedge \text{A1} \wedge \text{A2} \wedge  \text{A3} \wedge \text{A4}) \notag \\
 &= 3 \cdot \mathbb{P}(\{u \text{ is not weakly }0\text{-stable}\}\wedge \text{A1} \wedge \text{A2} \wedge  \text{A3} \wedge \text{A4}) \notag \\
&\leq 3 \cdot 0.038 \cdot \mathbb{P}(\text{A1} \wedge \text{A2} \wedge  \text{A3}) \notag \\
&\leq 3 \cdot 0.038 \cdot \left(\frac{1}{2}\left(1 - \frac{1}{16} - p_{\mathrm{s}}(0)\right)\right)^2 \stackrel{\text{Claim}~\ref{cl:binary_calculations}}< 0.0055.
\label{eq:q-first_case}
\end{align}

\paragraph{Consider the case $\xi_0(v_i) = \xi_0(v_{i+2}) = -1$.} We need to bound $q_3=q_3^1+q_3^2$, where
$$
 q_3^1:=\mathbb{P}^*(\mathcal{E}_2(P)\wedge\{\text{all vertices of $U_i$ are weakly 0-stable}\}),
$$
$$
 q_3^2:=\mathbb{P}^*(\mathcal{E}_2(P)\wedge\{\text{exactly one vertex of $U_i$ is not weakly 0-stable}\}).
$$
In what follows, we prove bounds on $q_3^2$ and $q_3^3$ separately.


We start from $q_3^2$. Claim~\ref{cl:12_from_11} gives us that, for every $u \in U_i$ and every even $t \leq i-2$, $\xi_t(u) = 1$. In particular, $\xi_0(u) = \xi_2(u)$. The strategy of the remaining proof is as follows: We assume that the event $\xi_0(w_i)=\xi_0(u_{i+1})=\xi_0(w_{i+2})=1$ holds deterministically. Then, for every $u \in U_i$, we introduce an event that implies $\xi_0(u) \neq \xi_2(u)$ and such that all these three events (corresponding to $w_i,u_{i+1},w_{i+2}$) are independent of each other and of initial opinions of all vertices from $U_i\cup\{v_i,v_{i+2}\}$. 
For $u\in\{w_i,w_{i+2}\}$, this is the event that all four grandchildren of $u$ have initial opinion -1. Probability of this event equals $1/16$. For $u=u_{i+1}$, the desired event is that $u$ has a child with both children having initial opinion $-1$ (this event has probability $7/16$). Indeed, since $\xi_0(v_i) = \xi_0(v_{i+2}) = -1$, we have that $\xi_1(v_{i+1}) = -1$, and thus, for $u=u_{i+1}$, in order to change its opinion to $-1$ at time 2, it is sufficient to have one child that has opinion $-1$ at time~1. Due to Remark~\ref{rm:independancy_general}, the introduced ``lower bound events'' are independent, and thus
\begin{align}  
q_3^1 &\leq \mathbb{P}^*\left(\bigwedge_{u\in U_i}\{\xi_0(u)=1\}\wedge \bigwedge_{u\in U_i}\{\xi_0(u) \neq \xi_2(u)\}\right)\notag\\
&\leq\mathbb{P}\left(\bigwedge_{u\in U_i}\{\xi_0(u)=1\}\right)\left(1-\frac{1}{16}\right)^2\left(1-\frac{7}{16}\right)\notag\\
&=\frac{1}{8} \cdot \left(\frac{15}{16}\right)^2 \cdot \frac{9}{16} < 0.0618.
\label{eq:q-second_case_1}
\end{align}

Next, let us give an upper bound on $q_3^3$. Let $u \in U_i$ be the unique vertex that is not weakly 0-stable.
Let us prove that, in order to satisfy $\mathcal{E}_1(P)$, the following properties must hold:
\begin{itemize}
    \item[B1] the two vertices from $U_i\setminus\{u\}$ have initial opinion $1$;
    \item[B2] $\xi_t(u) = 1$ for some deterministic (that does not depend on $\xi_0\in\mathcal{E}_1(P)$) time step $t := t(u,i) \in \{i-2,i\}$.
\end{itemize}

B1 follows from~\eqref{eq:proposition_E_1_contradiction_claim14} and the fact that 
Claim~\ref{cl:weak_stabilization} and Claim~\ref{cl:one_far_stabilization} are applicable to the path between the two vertices from $U_i\setminus\{u\}$ and time moment $t=0$. Indeed, if both initial opinions are~$-1$, then the considered path clearly satisfies the requirements of Claim~\ref{cl:weak_stabilization}. If the initial opinions are different, then the only non-trivial case that may interfere the requirements of Claim~\ref{cl:one_far_stabilization} is when $u=u_{i+1}$ and the path between the remaining two vertices from $U_i$ has seven vertices. However, since we assumed that $\xi_0(v_i)=\xi_0(v_{i+2})$, the initial opinions of $w_i,v_i,v_{i+2},w_{i+2}$ along the path change only once, as required.
B2 is an instant corollary of $\mathcal{E}_1(P)$ --- actually, $t$ is either $i-2$ or $i$ depending on the choice of $u\in U_i$. Due to Claim~\ref{cl:strong_calculations}, the probability that $u \text{ is not weakly }0\text{-stable and B2 holds}$ can be bounded from above by the probability of an event that only depends on $\xi_0|_{V(T_u)}$ which is, in turn, at most $0.038$. Hence, by Remark~\ref{rm:independancy_general} the latter event is independent from the event B1. So,
\begin{equation}
q_3^2 \leq 3 \cdot \mathbb{P}(\{u \text{ is not weakly }0\text{-stable}\}\wedge \text{B1} \wedge \text{B2}) < 3 \cdot 0.038 \cdot \frac 1 4 = 0.0285.
\label{eq:q-second_case_2}
\end{equation}
Finally, due to~\eqref{eq:q-second_case_1}~and~\label{eq:q-second_case_2},
\begin{equation}
 q_3=q_3^1+q_3^2<0.0618+0.0285=0.0903.
\label{eq:q-second_case}
\end{equation}

\paragraph{Consider the last case $\xi_0(v_i) \neq \xi_0(v_{i+2})$.} Here, we actually have two ways of giving initial opinions to $v_i$ and $v_{i+2}$ but we plan to prove the same upper on $q_3$ in those cases. 

Let us prove that if every $u\in U_i$ is weakly 0-stable, then $\mathcal{E}_1(P)$ does not hold. Indeed, since $\xi_0(v_i) \neq \xi_0(v_{i+2})$, there is an index $j \in \{i, i+2\}$ such that $\xi_0(u_{i+1}) = \xi_0(v_j)$. Then we can apply either Claim~\ref{cl:weak_stabilization} or Claim~\ref{cl:one_far_stabilization} to the path between $u_{i+1}$ and $w_j$ at time step 0, getting a contradiction with~\eqref{eq:proposition_E_1_contradiction_claim14}. Indeed, both $u_{i+1}$ and $w_j$ are weakly 0-stable. So, since $\xi_0(u_{i+1}) = \xi_0(v_j)$, if $\xi_0(u_{i+1}) = \xi_0(w_j)$ then Claim~\ref{cl:weak_stabilization} applies and if $\xi_0(u_{i+1}) \neq \xi_0(w_j)$ then Claim~\ref{cl:one_far_stabilization} applies.

Therefore, if $\mathcal{E}_2(P)$ holds, then there is a single vertex from $U_i$ which is not weakly 0-stable; denote this vertex by $u$. 
 Due to $\mathcal{E}_1(P)$, there exists a deterministic (that does not depend on $\xi_0\in\mathcal{E}_1(P)$) time step $t := t(u,i) \in \{i-2,i\}$ such that $\xi_t(u) = 1$. Due to Claim~\ref{cl:strong_calculations}, the probability of the latter event jointly with the event that $u$ is not weakly 0-stable can be bounded from above by the probability of an event that only depends on $\xi_0|_{V(T_u)}$ which is, in turn, at most $0.038$. We further consider separately three cases, depending on the particular choice of $u$.


\paragraph{1) $u=w_{i+2}$.} 
We have $\xi_0(w_{i}) = \xi_0(u_{i+1}) \neq \xi_0(v_i)$ due to~\eqref{eq:proposition_E_1_contradiction_claim14} by applying Claim~\ref{cl:weak_stabilization} or Claim~\ref{cl:one_far_stabilization} at time step 0 to the path between $w_i$ and $u_{i+1}$. Indeed, both $u_{i+1}$ and $w_i$ are weakly 0-stable. So, if $\xi_0(u_{i+1}) = \xi_0(w_i) = \xi_0(v_i)$ then Claim~\ref{cl:weak_stabilization} applies and if $\xi_0(u_{i+1}) \neq \xi_0(w)$ then Claim~\ref{cl:one_far_stabilization} applies. So, $q_3\leq 0.038\cdot\frac{1}{4}$. Moreover, if $\xi_0(v_i) = 1$, we can improve this bound: since $\xi_0(w_i)=\xi_0(u_{i+1})=-1$, the event $\mathcal{E}_1(P)$ implies that both $w_i$ and $u_{i+1}$ are not strongly 0-stable. So, in this case, $\mathbb{P}^*(\mathcal{E}_2(P)\wedge\{u=w_{i+2}\})\leq 0.038\cdot\frac{1}{4}(1-p_{\mathrm{s}}(0))^2$. We get
\begin{equation}
\mathbb{P}^*(\mathcal{E}_2(P)\wedge\{u=w_{i+2}\})\leq 0.038\cdot\frac{1}{4}\left(\1_{\xi_0(v_i) =- 1}+(1-p_{\mathrm{s}}(0))^2\cdot \1_{\xi_0(v_i) = 1}\right).
\label{eq:q_3_1}
\end{equation}

\paragraph{2) $u=w_{i}$.} Since $w_{i+2}$ and $u_{i+1}$ are weakly 0-stable, in the same way as in the case 1), we get $\xi_0(w_{i+2}) = \xi_0(u_{i+1}) \neq \xi_0(v_{i+2})$. So, here we get that $\mathbb{P}^*(\mathcal{E}_2(P)\wedge\{u=w_{i+2}\})\leq\frac{1}{4}$ as well. In a similar way, by distinguishing between $\xi_0(v_{i+2}) = -1$ and $\xi_0(v_{i+2}) = 1$, we get
\begin{equation}
\mathbb{P}^*(\mathcal{E}_2(P)\wedge\{u=w_{i}\})\leq 0.038\cdot\frac{1}{4}\left(\1_{\xi_0(v_{i+2}) =- 1}+(1-p_{\mathrm{s}}(0))^2\cdot \1_{\xi_0(v_{i+2}) = 1}\right).
\label{eq:q_3_2}
\end{equation}


\paragraph{3) $u=u_{i+1}$.} Then $w_i$ and $w_{i+2}$ are weakly 0-stable. If $\xi_0(w_{i}) = \xi_0(v_{i})$ and $\xi_0(w_{i+2}) = \xi_0(v_{i+2})$, then $\mathcal{E}_1(P)$ does not hold due to~\eqref{eq:proposition_E_1_contradiction_claim14} and Claim~\ref{cl:one_far_stabilization} applied to the path between $w_{i}$ and $w_{i+2}$ at time moment 0. Indeed, the initial opinion changes just once in the sequence $w_i, v_{i}, v_{i+2}, w_{i+2}$ and $w_i$ and $w_{i+2}$ are weakly 0-stable. Therefore, $\xi_0|_{\{w_i,w_{i+2}\}}$ has three possible initial values, excluding $(\xi_0(v_i),\xi_0(v_{i+2}))$; the latter vector has a single $-1$. Note that $\mathcal{E}_1(P)$ implies the following: if $w \in \{w_i, w_{i+2}\}$ has initial opinion $-1$ then it is not strongly 0-stable. Summing up, we get 
\begin{equation}
\mathbb{P}^*(\mathcal{E}_2(P)\wedge\{u=u_{i+1}\})\leq 0.038\cdot\frac{1}{4}\left(1 + (1-p_{\mathrm{s}}(0)) + (1-p_{\mathrm{s}}(0))^2\right).
\label{eq:q_3_3}
\end{equation}

The bounds~\eqref{eq:q_3_1},~\eqref{eq:q_3_2},~and~\eqref{eq:q_3_3} imply
\begin{equation}
q_3 \leq 0.038 \cdot \left(\frac 1 4 + \frac 1 4 (1-p_{\mathrm{s}}(0))^2 + \frac 1 4 (1 + (1-p_{\mathrm{s}}(0)) + (1-p_{\mathrm{s}}(0))^2)\right) \stackrel{\text{Claim}~\ref{cl:binary_calculations}}< 0.0285.
\label{eq:q-third_case}
\end{equation}

Finally, bounds~\eqref{eq:q-first_case},~\eqref{eq:q-second_case},~and~\eqref{eq:q-third_case} conclude the proof Lemma~\ref{lm:binary_upper}:
\begin{itemize}
\item if $\xi_0(v_i)=1$ and $\xi_0(v_{i+2})=1$, then $k_+=2$, $k_-=0$, and 
$$
\mathbb{P}^*(\mathcal{E}_1(P))<0.0117+q_3\leq 0.0117 + 0.0055<0.02=2p_+=q(i)
$$ 
due to~\eqref{eq:q-first_case};
\item if $\xi_0(v_i)=-1$ and $\xi_0(v_{i+2})=-1$, then $k_+=0$, $k_-=2$, and 
$$
\mathbb{P}^*(\mathcal{E}_1(P))<0.0117+q_3\leq 0.0117 + 0.0903<0.1026=2p_-=q(i)
$$ 
due to~\eqref{eq:q-second_case};  
\item if $\xi_0(v_i)\neq\xi_0(v_{i+2})$, then $k_+=k_-=1$, and 
$$
\mathbb{P}^*(\mathcal{E}_1(P))<0.0117+q_3\leq 0.0117 + 0.0285 = 0.0402< 2\sqrt{p_-p_+}=q(i)
$$ 
due to~\eqref{eq:q-third_case}.
\end{itemize}

\subsection{Proof of Claim~\ref{cl:binary_calculations}}
\label{sc:4:4}


Let us start with strong $t$-stability inequalities. We shall prove that there exists an $\varepsilon>0$ such that, for every $h\in\mathbb{Z}_{>0}$ and for every non-leaf non-root $v \in V(T=T^{(h)})$,
\begin{equation}
p_{\mathrm{s}}(3,h,v) > 0.4453+\varepsilon,
\quad
p_{\mathrm{s}}(2,h,v) > 0.5617+\varepsilon,
\quad
p_{\mathrm{s}}(1,h,v) \geq \frac{1}{2}p_{\mathrm{w}}(0)^2,
\quad
p_{\mathrm{s}}(0,h,w) > 0.5+\varepsilon.
\label{eq:four_inequalities_strong}
\end{equation}
If $h_{T}(v) = 1$, then $v$ is 0-stationary, hence $p_{\mathrm{s}}(i, h, v) = 1$ for every $i \in \{0 ,1, 2, 3\}$. This proves all the four inequalities. If $h_{T}(v) = 2$, then $v$ is 0-stable (and hence strongly 2-stable) if two non-sibling grandchildren of $v$ share its initial opinion. The latter holds with probability $9/16>0.5617$ --- this proves the second and the fourth inequalities. Also $v$ is strongly $1$-stable (and hence strongly 3-stable) if both of its children share an initial opinion. This event holds with probability $1/2$, implying the first and the third inequalities. Now, we can assume that $h_{T}(v) \geq 3$.

Let us prove all the four inequalities from~\eqref{eq:four_inequalities_strong} one by one from the leftmost to the rightmost. In the proof of the $j$th bound, we will assume that all bounds to the right of the $j$th bound from the statement of Claim~\ref{cl:binary_calculations} hold true. First, let us bound $p_{\mathrm{s}}(3, h, v)$ from below by $p_{\mathrm{s}}(1, h, v)$+$\mathbb{P}(\mathcal{A})$ where $\mathcal{A}$ is an event, defined below, that implies that
$v$ is strongly 3-stable but not strongly 1-stable. This is indeed possible since strong 1-stability implies strong 3-stability. Let $u_1,u_2$ be the children of $v$. We define the event $\mathcal{A}$ as follows:
\begin{itemize}
 \item $u_1$ is strongly 2-stable;
 \item initial opinion of $u_2$ is opposite to $\xi:= \xi_2(u_1)$;
 \item all 4 grandchildren of $u_2$ have have initial opinion $\xi$;
 \item at least one of the 4 grandchildren of $u_2$ is weakly 0-stable. 
\end{itemize} 
Let us first show that $\mathcal{A}$ implies that $v$ is not strongly 1-stable. Indeed, notice that the vertex $u_2$ has opinion opposite to $\xi$ at time 0, while $\xi_2(u_2)=\xi$ since all grandchildren of $u_2$ have initial opinion $\xi$. Therefore, for the extension $\tilde\xi_0$ of $\xi_0|_{V(T_v)}$ to the whole tree such that all the vertices outside of the subtree $T_v$ share the same initial opinion opposite to $\xi$, we get $\tilde\xi_1(v)\neq\xi$, while, for the extension $\tilde\xi'_0$ such that all the vertices outside $T_v$ share the initial opinion $\xi$, we get $\tilde\xi'_3(v)=\xi$.

It remains to show that $\mathcal{A}$ implies strong 3-stability of $v$. Let us prove that, for any extension $\tilde\xi_0$ of $\xi|_{v(T_v)}$ to the whole tree, $v$ is 3-stable and $\tilde\xi_3(v) = \xi$. Indeed, $\tilde\xi_2(u_2) = \xi$ since all grandchildren of $u_2$ have initial opinion $\xi$. Applying Claim~\ref{cl:weak_rising} twice --- to the parent of a weakly 0-stable grandchild of $u_2$ and then to $u_2$ itself, we get that $u_2$ is weakly 2-stable. Also $u_1$ is weakly 2-stable as it is strongly 2-stable due to the definition of $\mathcal{A}$. By Claim~\ref{cl:weak_stabilization}, both $u_1$ and $u_2$ are 2-stable, and hence $v$ is 3-stable. Since we conclude this only from the event $\mathcal{A}$ that is defined solely by the initial opinions on vertices of $T_v$, we get that $v$ is strongly 3-stable, as needed.

Finally, 
\begin{align*}
p_{\mathrm{s}}(3, h, v) & \geq p_{\mathrm{s}}(1, h, v)+\mathbb{P}(\mathcal{A})\\
&\geq p_{\mathrm{s}}(1) + p_{\mathrm{s}}(2) \cdot \frac{1}{2} \cdot \frac{1}{16}\cdot \biggl(1 - (1 - p_{\mathrm{w}}(0))^4\biggr) \\
&> 0.5 \cdot 0.925^2 + 0.5617 \cdot \frac{1}{32}\cdot(1 - 0.075^4) > 0.4453.
\end{align*}
The first bound in~\eqref{eq:four_inequalities_strong} follows.

Now, let us bound $p_{\mathrm{s}}(2, h, v)$. Similarly, we bound it from below by $p_{\mathrm{s}}(0, h, v)$+$\mathbb{P}(\mathcal{B})$ for some $\mathcal{B}$ implying that
$v$ is strongly 2-stable but not strongly 0-stable. This event is defined as follows:
\begin{itemize}
 \item all 4 grandchildren of $v$ share the opposite to $v$ initial opinion;
 \item 2 non-siblings among the grandchildren of $v$ are weakly 0-stable. 
\end{itemize} 
Let us show that $\mathcal{B}$ is the desired event. Parents $u_1$ and $u_2$ of $w_1$ and $w_2$ (which are the weakly 0-stable non-siblings), that are precisely the two children of $v$, have opinion $\xi := \xi_0(w_1)$ at time step 1. So, $\xi_2(v) = \xi \neq \xi_0(v)$ --- thus, $v$ is not strongly 0-stable. It remains to show that $v$ is strongly 2-stable. 
By Claim~\ref{cl:maintain_weak_value}, $\xi_2(w_1) = \xi_2(w_2) = \xi$.
Hence, $w_1$ and $w_2$ also remain weakly 2-stable, due to Claim~\ref{cl:maintain_weak_stability}. Due to Claim~\ref{cl:weak_stabilization}, $v$ is 2-stable. Since we conclude this only from the event $\mathcal{B}$ that is defined solely by the initial opinions on vertices of $T_v$, we get that $v$ is strongly 2-stable, as needed.

Finally, 
$$
p_{\mathrm{s}}(2, h, v) \geq p_{\mathrm{s}}(0) + \frac{1}{16}\biggl(1 - (1 - p_{\mathrm{w}}(0))^2\biggr)^2 > 0.5+\frac{1}{16}(1-0.075^2)^2 > 0.5617.
$$
The second bound in~\eqref{eq:four_inequalities_strong} follows. \\

Let us prove the third inequality from~\eqref{eq:four_inequalities_strong}. Let $u_1, u_2$ be the children of $v$. By Claim~\ref{cl:weak_stabilization}, if $u_1$ and $u_2$ are both weakly 0-stable and share the initial opinion, then they are 0-stable, and hence $v$ is strongly 1-stable. Due to Remark~\ref{rm:independancy_general} and Remark~\ref{rm:equal_strong_probabilities}, we have that
$$
p_{\mathrm{s}}(1, h, v) \geq \frac{1}{2}p_{\mathrm{w}}(0, h, u_1) p_{\mathrm{w}}(0, h, u_2).
$$
 So, $p_{\mathrm{s}}(1,h,v) \geq\frac{1}{2} p_{\mathrm{w}}(0)^2$ follows from taking infimum of the right-hand side of the last inequality.\\

Let us finally prove the last inequality in~\eqref{eq:four_inequalities_strong}: $p_{\mathrm{s}}(0,h,v) > 0.5+\varepsilon$. Let $\xi := \xi_0(v)$. Let $u_1, u_2$ be children of $v$. If $u_1$ has child $w_1$ and $u_2$ has child $w_2$ such that $\xi_0(w_1) = \xi_0(w_2) = \xi$, and $w_1$ and $w_2$ are weakly $0$-stable, then $v$ is $0$-stable, due to Claim~\ref{cl:weak_stabilization}. 
So, $v$ is strongly $0$-stable with probability at least $\left(1 - \left(1 - \frac{p_{\mathrm{w}}(0)}{2}\right)^2\right)^2$. This probability is more than $1/2 + \varepsilon$ if and only if $p_{\mathrm{w}}(0) > 2 - \sqrt{2(2-\sqrt{2})}$. It remains to prove that $p_{\mathrm{w}}(0) > 0.925$, as $0.925 >  2 - \sqrt{2(2-\sqrt{2})}$. \\


Let us prove that $p_{\mathrm{w}}(0) > 0.925$. Note that this inequality would complete the proof of Claim~\ref{cl:binary_calculations}. First, if $v$ is a leaf, then 
$p_{\mathrm{w}}(0, h, v) = 1$ due to the definition. So, further we assume that $v$ is not a leaf. Let us denote by $\mathcal{N}$ the subset of all numbers from $\{0,\ldots,h_T(v)\}$ with the same parity as $h_T(v)$, assuming that 0 is even. For every $t \in \mathcal{N}$, all vertices $u \in V(T_v)$ at distance $t$ from leaves have the same $p_{\mathrm{w}}(0, h, u)$, due to the definition of weak stability. For brevity, let us denote $1 - p_{\mathrm{w}}(0, h, u)$ by $q_{t}$. Let $q>0$ be the smallest positive real solution of $x = P(x)$, where 
$$
P(x) := P_1(x) + P_2(x)P_3(x)
$$ 
and $P_1,P_2,P_3$ are defined by 
\begin{align} 
P_1(x) & := \left(\frac{1}{4}+\frac{x^2}{4}\right)^2,\label{eq:P_1}\\
P_2(x) & := \left(1-\frac{1-x^2}{4}\right)^2,\label{eq:P_2}\\
P_3(x) & := \left(\frac{1}{2}+\frac{x}{2}\right)^4 - \left(\frac{1}{4}+\frac{x^2}{4}\right)^2.\label{eq:P_3}
\end{align}
Since $P(0)=1/16$, $P(3/40)<3/40$, and $P$ has only non-negative coefficients (implying that $P$ increases on $(0,\infty)$),
 we get that $q\in(1/16,3/40)$.  
 
It is sufficient to prove by induction, that $q_t \leq q$ for every $t \in \mathcal{N}$, as $p_{\mathrm{w}}(0, h, v) = 1 - q_{h_T(v)}$ and $q < 3/16=0.075$. For the base of induction, we check $t\in\{0,1,2,3\}$. Clearly, $q_t = 0$ for $t \in \{0, 1\} \cap \mathcal{N}$. Moreover, $q_t = \frac{1}{16} < q$ for $t \in \{2, 3\} \cap \mathcal{N}$ since in these two cases $u$ is weakly $0$-stable if and only if it has a grandchild with the same initial opinion as $u$ has. 

For the induction step let us fix $t \in \mathcal{N}$ ($t \geq 4$) and a vertex $u$ at distance $t$ from leaves. Let $\hat\xi := \xi_0(u)$. Let $\mathcal{A}, \mathcal{B}, \mathcal{C}$ be three disjoint events defined by $\xi_0|_{T_u}$ in the following way:
\begin{enumerate}
    \item $\mathcal{A}$ holds when 
    \begin{itemize}
        \item every grandchild of $u$ with initial opinion $\hat\xi$ is not weakly $0$-stable;
        \item for every child of $u$, its children share the initial opinion (i.e. initial opinions of siblings that are grandchildren of $u$ coincide).
    \end{itemize}
    \item $\mathcal{B}$ holds when at least one grandchild of $u$ is weakly $0$-stable with initial opinion $\hat\xi$;
    \item $\mathcal{C} = \bar{\mathcal{A}} \cap \bar{\mathcal{B}}$.
\end{enumerate}

If $\mathcal{B}$ holds then $u$ is weakly $0$-stable due to Claim~\ref{cl:weak_grandparent}. Therefore, 
$$
\mathbb{P}\left(u \text{ is not weakly 0-stable}\mid\mathcal{B}\right)=0
$$
implying that
$$q_t=\sum_{\mathcal{Q}\in\{\mathcal{A},\mathcal{B},\mathcal{C}\}}
\mathbb{P}\left(u \text{ is not weakly 0-stable}\mid\mathcal{Q}\right)\mathbb{P}(\mathcal{Q})
\leq \mathbb{P}(\mathcal{A}) + \mathbb{P}\left(u \text{ is not weakly 0-stable}\mid\mathcal{C}\right)\mathbb{P}(\mathcal{C}).$$
From the definition of events $\mathcal{A},\mathcal{B},\mathcal{C}$, it immediately follows that 
$$
\mathbb{P}(\mathcal{A}) = \left(\frac{1}{4}+\frac{q_{t-2}^2} {4}\right)^2 = P_1(q_{t-2}),
$$
where $P_1$ is defined in~\eqref{eq:P_1}, and 
$$
\mathbb{P}(\mathcal{C}) = \mathbb{P}(\bar{\mathcal{B}}) - \mathbb{P}(\mathcal{A}) = \left(\frac{1}{2}+\frac{q_{t-2}}{2}\right)^4 - \left(\frac{1}{4}+\frac{q_{t-2}^2}{4}\right)^2 = P_3(q_{t-2}),
$$
where $P_3$ is defined in~\eqref{eq:P_3}.

Let us notice, that all three polynomials $P_1,P_2,P_3$ defined in~\eqref{eq:P_1}--\eqref{eq:P_3} have only non-negative coefficients, hence they increase on $[0, q]$. Since $P(q) = q$, we get that, for any $0 \leq x_1, x_2, x_3 \leq q$,
$$
P_1(x_1) + P_2(x_2)P_3(x_3) \leq P_1(q) + P_2(q)P_3(q)= P(q)=q.
$$
So, it remains to prove the first inequality in 
$$
\mathbb{P}\left(u \text{ is  not weakly 0-stable}\mid\mathcal{C}\right) \leq \left(1-\frac{1-q_{t-4}^2}{4}\right)^2 = P_2(q_{t-4}).
$$

Let us enumerate all $16$ possible initial opinions of the four grandchildren of $u$ arbitrarily. We decompose $\mathcal{C}=\sqcup_{j=1}^{16}\mathcal{C}_j$, where $\mathcal{C}_j$ says that $\mathcal{C}$ holds and the four grandchildren of $u$ have the $j$-th 4-tuple of opinions. Fix $j\in[16]$ such that $\mathbb{P}(\mathcal{C}_j) \neq 0$. 
By the definition, $\mathcal{C}$ (and therefore $\mathcal{C}_j$) implies that there is a child $v_1$ of $u$ and two children  $u_1, u_2$ of $v_1$ such that $\xi_0(u_1) \neq \hat\xi$ and $\xi_0(u_2) = \hat\xi$. Let us notice that, since $\xi_0(u_1) \neq \hat\xi$, any event defined by $\xi_0|_{V(T_{u_1}) \setminus \{u_1\}}$ is independent from $\mathcal{C}_j$. To conclude the proof, let us construct an event $\mathcal{D}$ defined by $\xi_0|_{V(T_{u_1}) \setminus \{u_1\}}$ with probability at least $1 - \left(1-(1-q_{t-4}^2)/4\right)^2$ such that $\mathcal{C}_j \cap \mathcal{D}$ implies the weak 0-stability of $u$.

The desired event $\mathcal{D}$ is defined in the following way. It says that there are two grandchildren $w_1, w_2$ of $u_1$ (see Figure~\ref{fig:tree_arrangement}) such that
\begin{itemize}
    \item $w_1,w_2$ are siblings (i.e. they share a parent);
    \item $\xi_0(w_1) = \xi_0(w_2) = \hat\xi$;
    \item at least one of $w_1, w_2$ is weakly 0-stable.
\end{itemize}
The required condition that $\mathcal{D}$ is defined by $\xi_0|_{V(T_{u_1}) \setminus \{u_1\}}$ and the inequality 
$$
\mathbb{P}(\mathcal{D})\geq
1 - \left(1-\frac{1-q_{t-4}^2}{4}\right)^2
$$
are immediate. Now, suppose that $\mathcal{C}_j \cap \mathcal{D}$ holds. 
\begin{figure}[h]
    \centering
    \includegraphics[scale=0.20]{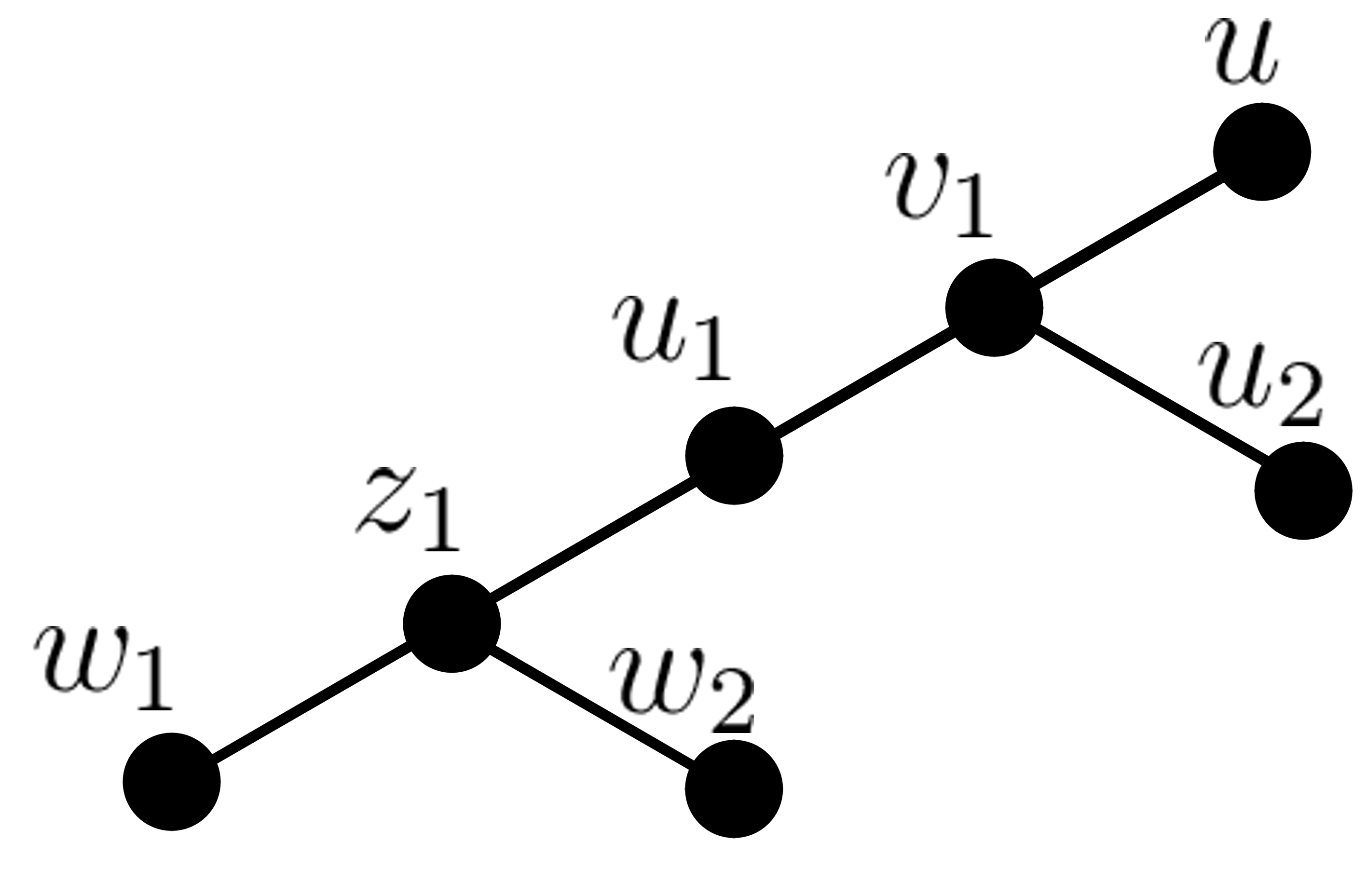}
    \caption{Descendants of $u$}
    \label{fig:tree_arrangement}
\end{figure}
We have to derive the 
weak 0-stability of $u$. 
Let us recall that $\xi_0(u) = \xi_0(u_2) = \hat\xi$, so $\xi_1(v_1) = \hat\xi$ as $u,u_2$ are neighbours of $v_1$. In addition, at time 1, the parent $z_1$ of $w_1$ and $w_2$ holds the opinion $\hat\xi$. Since $u_1$ is adjacent to $z_1$ and to $v_1$, we get that $\xi_2(u_1) = \hat\xi$. Then, applying Claim~\ref{cl:weak_rising} twice, we first get that $z_1$ is weakly 1-stable, and then that $u_1$ is weakly 2-stable. Since $\xi_1(v_1)=\hat\xi$, the extension $\tilde\xi_0$ of $\xi_0|_{T_u}$ to the entire tree $T$ satisfying $\tilde\xi_0(u^*)=\hat\xi$ for all $u^*\notin V(T_u)$ forces $u$ to inherit the opinion $\hat\xi$ at time $2$, i.e. $\tilde\xi_2(u)=\hat\xi$. Note that $u_1$ is weakly 2-stable w.r.t. $\tilde\xi_0$ as well since the event $\mathcal{C}_j\cap\mathcal{D}$ is defined by $\xi_0|_{T_u}$. Then, by Claim~\ref{cl:weak_grandparent}, $u$ is weakly 2-stable w.r.t. $\tilde\xi_0$. It is also 2-stable (w.r.t. to $\tilde\xi_0$) by the second equivalent definition of weak 2-stability from Claim~\ref{cl:equivalent_weak_definitions}, due to the substitution in the definition $\tilde\xi_0^* := \tilde\xi_2$, and hence $u$ is 0-stable. So finally we get that $u$ is weakly 0-stable w.r.t. $\xi_0$, due to the second equivalent definition of weak 0-stability, once again by substitution $\tilde\xi_0^* := \tilde\xi_0$, completing the proof.

\section{Perfect $k$-ary trees: proof of Theorem~\ref{th:3}}
\label{sc:5}

In this section, we prove Theorem~\ref{th:3}, that asserts both the lower and the upper bound for the typical stabilisation time on a perfect $k$-ary tree. Our upper bound is an immediate corollary of Theorem~\ref{th:1}. Indeed, $\mathcal{Q}$ only consists of paths with ends at distance at least 2 from the leaves. Hence, the maximum of $t(Q)$ over all paths in $\mathcal{Q}$ is achieved at the longest such path. So, the maximum equals $D-3$.

In this section, we prove the lower bound. We first introduce the notion of being $(\leq t)$-{\it{stable}}, then state the lower bound in a separate Lemma~\ref{lm:k_ary_lower} and prove it, completing the proof of Theorem~\ref{th:3}.

For every positive integer $h$, we fix a perfect $k$-ary tree $T^{(h)}$ of depth $h$ rooted in a vertex $R$. When the height is clear from the context we omit the superscript and write simply $T$. For a uniformly random $\xi_0$ on $V(T)$, we let $\tau:=\tau(T;\xi_0)$.

Let us say that a vertex $v \in V(T)$ is $(\leq t)$-{\it{stable}}, if $\tilde\xi_{t'}(v) = \tilde\xi_t(v)$ holds for every vector of initial opinions $\tilde\xi_0$ such that $\tilde\xi_0|_{V(T_v)} = \xi_0|_{V(T_v)}$ and for every time step $t' < t$ with the same parity as $t$. Let us note that being $(\leq t)$-stable is the property of $\xi_0|_{V(T_v)}$. So, Remark~\ref{rm:independancy_general} is applicable here too (i.e. events $ \{v \textit{ is } (\leq t)\textit{-stable} \} $ are independent for vertices $v$ which are not descendants of each other).

\begin{claim}
There exists $C > 0$ such that, for every even $k>2$, every opinion $\xi \in \{-1, 1\}$, every non-root non-leaf vertex $v\in V(T)$ of a perfect $k$-ary tree $T$, and every time step $t\in \mathbb{Z}_{\geq 2}$, probability that $v$ is $(\leq t)$-stable and $\xi_{t}(v) = \xi$ is at least $2^{-Ctk}$.
\label{cl:stable_until_probability}
\end{claim}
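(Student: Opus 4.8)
The plan is to first dispose of the two universal quantifiers that make the statement awkward: the one over $\xi$ and the one over extensions hidden in the definition of $(\leq t)$-stability. By the symmetry of the majority rule we may take $\xi=+1$. For the extensions, recall that the update $\xi_{s+1}(w)=\mathrm{sign}\sum_{u\in N(w)}\xi_s(u)$ is monotone: if $\tilde\xi_0\leq\tilde\xi_0'$ coordinatewise then $\tilde\xi_s\leq\tilde\xi_s'$ for all $s$. Hence among all extensions agreeing with $\xi_0$ on $V(T_v)$, the pointwise smallest one $\eta$ -- assigning $-1$ to every vertex outside $V(T_v)$ -- minimises $\eta_s(v)$ at every time $s$. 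One checks that in $\eta$ the whole complement of $T_v$ stays $-1$, so $\eta$ is exactly the dynamics on $T_v$ with $v$ given one extra, permanently $-1$, neighbour. Consequently, if in this \emph{pinned} dynamics $v$ has opinion $+1$ at all times $\leq t$ of the parity of $t$, then by monotonicity $\tilde\xi_{t'}(v)=+1$ at those steps for \emph{every} extension, so $v$ is $(\leq t)$-stable with $\xi_t(v)=+1$; this sufficient event depends only on $\xi_0|_{V(T_v)}$, as needed for the later independence (Remark~\ref{rm:independancy_general}). Thus it suffices to lower bound the probability that the pinned dynamics keeps $v$ at $+1$ on the relevant steps.

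Second, I would build a sufficient event supported on only $O(tk)$ initial opinions of $T_v$ and estimate its probability. The governing mechanism is the light cone: the pinned $-1$ neighbour acts on $v$ directly and its influence moves downward one level per step, so a descendant at depth $d$ evolves autonomously for roughly its first $d$ steps. The planned construction fixes a downward path $p_0=v,p_1,\dots,p_m$ with $m=\Theta(\min(t,h_T(v)))$, sets every $p_i$ to $+1$, and attaches $\Theta(k)$ further designated $+1$ children at each level, arranged so that at each relevant step $p_i$ has at least $\tfrac{k}{2}+1$ neighbours that are guaranteed $+1$ (its two path-neighbours together with enough designated children). The structural gain is that once $v$ is forced to stay $+1$ it becomes a reliable $+1$ neighbour for its children, lowering the number of designated children needed one level deeper from $\tfrac{k}{2}+1$ to $\tfrac{k}{2}$; running this as an induction in time (equivalently along the levels) should show that the guaranteed-$+1$ region recedes by only one level per step, so depth $\gtrsim t$ keeps $v$ at $+1$ through time $t$ (and when the cone reaches the leaves, the boundary is harmless since leaves merely copy their parent). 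Fixing $O(k)$ opinions per level over $\Theta(t)$ levels then gives probability at least $2^{-Ctk}$.

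The step I expect to be the main obstacle is making the previous paragraph rigorous while keeping the exponent linear in $t$. The naive implementation -- demanding a \emph{fixed} set of $\tfrac{k}{2}+1$ descendants that are robustly $+1$ at \emph{all} required steps and recursing -- forces each level to contribute a factor $p^{k/2}$, so the probability collapses to $2^{-(k/2)^{\Theta(t)}}$, which is far below $2^{-Ctk}$ and hence useless. To get the linear exponent one must exploit that (i) the pin propagates slowly, so a deep designated gadget need only survive a bounded number of steps before its task is done, and (ii) the $+1$ opinion may be carried by \emph{different} descendants at different times, so the support need not be a single rigid stable subtree; this is precisely where the robustness of the per-level gadgets must be established, and is the crux. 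Organising the induction around the parity of the levels (each level need only be $+1$ at one parity, which removes spurious factors of $\tfrac12$) and around a receding front rather than a static internally-stable blob is what should keep the per-level cost at $2^{-O(k)}$. Finally, the case of odd $t$ is handled by shifting all parities by one and repeating the argument verbatim.
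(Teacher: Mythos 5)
Your reductions are sound: taking $\xi=+1$ by symmetry, and using monotonicity of the majority update to replace the universal quantifier over extensions by the single pinned all-$(-1)$ extension, is correct. (The paper does not use this coupling; it instead runs a direct induction showing that once the parent's opinion is controlled at one parity, each subtree evolves, at the other parity, exactly as a reference dynamics pinned at $\xi$ outside the subtree. Your monotone-coupling reduction is a legitimate, arguably cleaner, substitute for that bookkeeping.) You also correctly diagnosed that the naive construction fails: demanding a fixed set of roughly $k/2$ descendants per level that are guaranteed $+1$ at all relevant times makes the constraint set grow like $(k/2)^{\mathrm{depth}}$, giving $2^{-(k/2)^{\Theta(t)}}$.

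But the rescue you sketch does not close this gap, and the crux you flag as unresolved is precisely the content of the paper's proof. Your point (i), that the pin propagates slowly, does not help: $v$ itself must hold $\xi$ at every parity-$t$ time up to $t$, so its children must be controlled at all times up to $t-1$; the multiplicative blow-up occurs at the top of the tree, not at a receding front, and fixing initial opinions in a window of depth $O(1)$ per level cannot control opinions at times of order $t$. Your point (ii), that different descendants may carry the opinion at different times, is the right intuition, but you supply no mechanism to lower bound the probability of such a covering event, and none of the parity or truncation devices you mention can produce one: the law of a child's trajectory under the pinned dynamics is unknown (the authors explicitly say they could not prove a constant bound on $t$-stability for general $k$), and for an arbitrary trajectory law the event that at each relevant time at least one child in a bounded set shows $\xi$ can have probability $0$. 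The paper's missing idea is structural. Pair up sibling subtrees $(u,w)$ and let $E(u,w)$ be the event that at each relevant time at least one of $u,w$ shows $\xi$ in its pinned reference dynamics $\hat\xi^{(u)},\hat\xi^{(w)}$. If the initial data on $T_w$ is the global flip of that on $T_u$, then, since both boundaries are pinned at $\xi$, monotonicity gives that $w$'s trajectory dominates the negation of $u$'s, so $w$ shows $\xi$ whenever $u$ does not; writing $\phi(\cdot)$ for the vector of root opinions at the relevant-parity times and $\Phi$ for its range, $|\Phi|\leq 2^{\lceil t/2\rceil}$, this yields $\mathbb{P}\bigl(E(u,w)\mid \phi(\hat\xi^{(u)})=\phi\bigr)\geq\mathbb{P}\bigl(\phi(\hat\xi^{(w)})=\phi\bigr)$, whence by Cauchy--Schwarz
$$
\mathbb{P}(E(u,w))\;\geq\;\sum_{\phi\in\Phi}\mathbb{P}\bigl(\phi(\hat\xi^{(u)})=\phi\bigr)^2\;\geq\;\frac{1}{|\Phi|}\;\geq\;2^{-\lceil t/2\rceil},
$$
uniformly over the unknown trajectory law. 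With $O(k)$ disjoint, hence independent, pairs among the children of $v$ and of two designated children $u_1,u_2$ --- so the construction is only two levels deep, not $\Theta(t)$ as in your plan --- this gives the claimed $2^{-O(tk)}$. Without this flip-coupling and second-moment step, or an equivalent, your proposal remains a blueprint with the decisive estimate unproved.
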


\begin{proof}
Without loss of generality we may assume that $h_T(v)\geq 2$ since otherwise the desired event follows from the property that more than a half of leaves adjacent to $v$ have opinion $\xi$; and the latter event has probability at least $1/4$.

We will show a sufficiently large class $\mathcal{F}=\mathcal{F}(k,\xi,v,t)$ of vectors of initial opinions $\xi_0^*: V(T_v) \rightarrow \{-1, 1\}$ such that, for any extension of $\xi_0^*$ to the whole tree, the vertex $v$ is $(\leq t)$-stable and $\xi_t(v) = \xi$. 

For a non-root vertex $u$, let the random vector $\hat\xi^{(u)}_0:V(T)\to\{-1,1\}$ have uniformly random distribution on $V(T_u)$ and $\hat\xi_0|_{V(T) \setminus V(T_u)} \equiv \xi$. 
 Let 
 \begin{center}
 $u_1, \ldots, u_k$ be children of $v$; \,\, $w_1, \ldots, w_k$ be children of $u_1$; \,\, and $z_1, \ldots, z_k$ be children of $u_2$. 
 \end{center}
 Let $U=\{u_3,\ldots,u_k,w_1,\ldots,w_k,z_1,\ldots,z_k\}$. 
  Let $t(u): U \to \mathbb{Z}_{\geq 0 }$ equal $t$ if $u = u_i$ for some $i \in \{3, \ldots, k\}$, and $t-1$, otherwise. For convenience, for vertices $u, w \in U$ such that $t(u) = t(w)$ we introduce the event
    $$
    E(u, w) :=
    \left\{\text{for all $0\leq i\leq t(u)-1$ with the same parity as $t(u)-1$, $\hat\xi^{(u)}_i(u)=\xi$ or $\hat\xi^{(w)}_i(w)=\xi$}\right\}.
    $$
We then define $\mathcal{F}$ as the set of $\xi_0^*$ such that: 
\begin{enumerate}
    \item for every odd $i \in \{3 , \ldots, k-1\}$, if $\hat\xi^{(u_i)}_0|_{V(T_{u_i})}=\xi^*_0|_{V(T_{u_i})}$ and $\hat\xi^{(u_{i+1})}_0|_{V(T_{u_{i+1}})}=\xi^*_0|_{V(T_{u_{i+1}})}$, then $E(u_i, u_{i+1})$ holds; 
    

    \item for every odd $i \in [k-1]$,
    if $\hat\xi^{(w_i)}_0|_{V(T_{w_i})}=\xi^*_0|_{V(T_{w_i})}$ and $\hat\xi^{(w_{i+1})}_0|_{V(T_{w_{i+1}})}=\xi^*_0|_{V(T_{w_{i+1}})}$, then
    $E(w_i, w_{i+1})$ holds; 
    

    \item for every odd $i \in [k-1]$,
    if $\hat\xi^{(z_i)}_0|_{V(T_{z_i})}=\xi^*_0|_{V(T_{z_i})}$ and $\hat\xi^{(z_{i+1})}_0|_{V(T_{z_{i+1}})}=\xi^*_0|_{V(T_{z_{i+1}})}$, then
    $E(z_i, z_{i+1})$ holds; 

    \item 
    $\xi^*_0(v) = \xi$ and $\xi^*_0(u_1) = \xi^*_0(u_2) = \xi$.
\end{enumerate}

Let $\xi_0$ be an extension of $\xi_0^*$ to the entire $T$. Let us first show that indeed $v$ is $(\leq t)$-stable w.r.t. $\xi_0$ and $\xi_t(v)=\xi$. It is sufficient for us to show the following stronger statement:
\begin{itemize}
    \item[(1)] for every $u\in U$ and every $s\in\{0,\ldots,t(u)-1\}$ with the same parity as $t(u)-1$, if $\hat\xi_0^{(u)}|_{V(T_u)}=\xi_0^*|_{V(T_u)},$
    then $\xi_s(u)=\hat\xi^{(u)}_s(u)$;
    

    \item[(2)] $\xi_s(u_1) = \xi_s(u_2) = \xi$ for every $s \in \{0, \ldots, t-1\}$ with the same parity as $t-1$;
    
    \item[(3)] $\xi_{s}(v) = \xi$, for every $s \in \{0, \ldots, t-1\}$ with the same parity as $t$.
\end{itemize}
Indeed, if that holds, recalling that $\xi_0|_{V(T_v)}\in\mathcal{F}$,  we get that more than a half of children of $v$ at time step $t-1$ have opinion $\xi$, and so $\xi_t(v) = \xi$. Along with the property (3), it implies $(\leq t)$-stability of $v$ as needed.\\


In order to prove the statement, suppose towards contradiction that $s < t$ is the earliest time step when at least one vertex from $U \sqcup \{v, u_1, u_2\}$ has opinion other than we declared. Let us first assume that one of the vertices $v, u_1, u_2$ have such a different opinion at time $s$. By the condition 4 in the definition of $\mathcal{F}$, they have the desired opinion at time step $0$, i.e. $s\geq 1$. If $s$ has the same parity as $t$, then at least $k/2+1$ of children of $v$ have opinion $\xi$ at time $s-1$ since (1) holds for time step $s-1$ and due to the condition 1 in the definition of $\mathcal{F}$. So $\xi_{s}(v) = \xi$. In addition, if $s$ has different parity than $t$, then at least $k/2$ of children of $u_1$ have opinion $\xi$ at time $s-1$ since (1) holds at time step $s-1$ and due to the condition 2 in the definition of $\mathcal{F}$. Since the parent of $u_1$ also has opinion $\xi$ at time $s-1$, we get that $\xi_{s}(u_1) = \xi$. Similarly, $\xi_s(u_2) = \xi$ --- a contradiction. Now, assume that a vertex $u \in U$ has an opinion at time $s$ which is not declared by (1) and the definition of $\mathcal{F}$. Due to assumption that (1)--(3) hold for time moments less than $s$,  the parent of $u$ has opinion $\xi$ at every time $s' < s$ with different parity, hence $u$ inherits the desired opinion --- a contradiction.

It remains to show that $\mathcal{F}$ is large. Let a pair of vertices $(u,w)$ be one of the following:
\begin{itemize}
    \item $(u_i, u_{i+1})$, for some odd $i \in \{3 , \ldots, k-1\}$; 
    \item $(w_i, w_{i+1})$, for some odd $i \in [k-1]$; 
    \item $(z_i, z_{i+1})$, for some odd $i \in [k-1]$.
\end{itemize}

Hence, $t(u) = t(w)$. In order to bound the probability of $\mathcal{F}$, first, let us bound the probability of the event $E(u, w)$. Let us note, that vectors
$$
\phi(\hat\xi_0^{(u)}):=\left(\hat\xi^{(u)}_{(t(u) - 1) \bmod 2}(u), \hat\xi^{(u)}_{((t(u) - 1) \bmod 2) + 2}(u) \ldots, \hat\xi^{(u)}_{t(u)-1}(u)\right)
$$ 
and
$$
\phi(\hat\xi_0^{(w)}):=\left(\hat\xi^{(w)}_{(t(w) - 1) \bmod 2}(w), \hat\xi^{(w)}_{((t(w) - 1) \bmod 2) + 2}(w) \ldots, \hat\xi^{(w)}_{t(w)-1}(w)\right)
$$ 
are independent and identically distributed. Let $\Phi$ be the set of all atoms of this distribution.

Let us fix some isomorphism $\varphi$ from $T_w$ to $T_u$. Suppose that $\hat\xi_0^{(w)}|_{V(T_w)} = -\hat\xi_0^{(u)} \circ\varphi|_{V(T_w)}$. Let $\hat\xi_s^{(u)}(u) \neq \xi$, for some $s$. Then, due to symmetry and the fact that $\hat\xi_0^{(w)}|_{V(T)\setminus V(T_w)}\equiv\xi$, we get $\hat\xi_s^{(w)}(w) = \xi$ as well. Therefore, $E(u, w)$ holds. So, for every $\phi \in \Phi$,
$$
\mathbb{P}(E(u,w)\mid\phi(\hat\xi_0^{(u)}) = \phi)\geq\mathbb{P}(\phi(\hat\xi_0^{(w)}) = \phi)=\mathbb{P}(\phi(\hat\xi_0^{(u)}) = \phi).
$$
Thus,
\begin{align*}
\mathbb{P}(E(u,w)) &=\sum_{\phi\in\Phi}\mathbb{P}(E(u,w)\mid \phi(\hat\xi_0^{(u)}) = \phi)\mathbb{P}(\phi(\hat\xi_0^{(u)}) = \phi)
\geq 
\sum_{\phi \in \Phi}\mathbb{P}\left(\phi(\hat\xi_s^{(u)}) = \phi\right)^2\\
& \stackrel{\text{QM-AM}}\geq \frac{1}{|\Phi|}\biggl(\sum_{\phi \in \Phi} \mathbb{P}\left(\phi(\hat\xi_s^{(u)}) = \phi\right)\biggr)^2 = \frac{1}{|\Phi|}\geq
2^{-\lceil t(u)/2 \rceil}.
\end{align*}
Since the event $E(u,w)$ is a function of random variables associated with vertices of $T_{u}$ and $T_{w}$, we get that these events are independent over the set of considered pairs $(u,w)$. So, applying the latter bound to every pair $(u,w)$, we conclude that 
$$
\mathbb{P}\left(\hat\xi_0^{(v)}|_{V(T_v)}\in\mathcal{F}\right)\geq 2^{-3} \left(\frac{1}{2^{\lceil t/2 \rceil}}\right)^{\frac{k-2}{2}}  \left(\frac{1}{2^{\lceil (t-1)/2 \rceil}}\right)^{k} 
> 2^{-Ctk},$$
for some constant $C>0$, completing the proof.

\end{proof}


\begin{lemma}

There exists $c_->0$ such that for any even $k>2$, any $R$-rooted perfect $k$-ary tree $T$ of height $h$, and uniformly random $\xi_0\in\{-1,1\}^{V(T)}$, whp there is at least one not $\lfloor c_- \cdot \sqrt{h \ln{k}}/k\rfloor$-stationary vertex.
\label{lm:k_ary_lower}   
\end{lemma}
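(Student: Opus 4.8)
The plan is to mirror the lower-bound argument of Lemma~\ref{lm:binary_lower}, replacing strong $t$-stability by the $(\leq t)$-stability whose probability is controlled by Claim~\ref{cl:stable_until_probability}. Set $\ell:=\lfloor c_-\sqrt{h\ln k}/k\rfloor+2$ for a small constant $c_-$ to be fixed at the end, and consider the vertices $R^*$ at height exactly $\ell$. For each such $R^*$ I will describe an event $\mathcal{F}=\mathcal{F}(R^*)$, depending only on $\xi_0|_{V(T_{R^*})}$, which forces the opinion $-1$ to travel up the monotone path $v_0v_1\ldots v_\ell=R^*$ joining some descendant $v_0$ of $R^*$ to $R^*$, one vertex per time step, so that $v_i$ first attains opinion $-1$ exactly at time $i$. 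In particular $R^*$ satisfies $\xi_{\ell-2}(R^*)=1\neq-1=\xi_\ell(R^*)$ and hence is not $(\ell-2)$-stationary. Since the subtrees rooted at distinct height-$\ell$ vertices are disjoint, the events $\mathcal{F}(R^*)$ are independent by Remark~\ref{rm:independancy_general}, and it suffices to show $k^{h-\ell}\,\mathbb{P}(\mathcal{F})\to\infty$.

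The mechanism of $\mathcal{F}$ is the one behind Claim~\ref{cl:main_optimal}. Each internal path vertex $v_i$ has degree $k+1$: its on-path child $v_{i-1}$, its on-path parent $v_{i+1}$, and $k-1$ off-path children. I require that exactly $k/2$ of these off-path children root subtrees that are $(\leq i)$-stable with opinion $-1$, while the remaining $k/2-1$ root subtrees that are $(\leq i)$-stable with opinion $+1$ (the seed $v_0$ and the bottom $O(1)$ levels are pinned down directly, contributing only a bounded factor $2^{-O(k)}$). An easy induction on $i$, identical in spirit to the proof of Claim~\ref{cl:main_optimal}, then shows that at every step strictly before $i$ the vertex $v_i$ sees only $k/2$ of its neighbours in state $-1$ and stays at $+1$, whereas at time $i-1$ the freshly flipped child $v_{i-1}$ supplies the $(k/2+1)$-st vote and forces $v_i$ to $-1$ at time $i$. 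For $v_\ell=R^*$ the same count uses only its $k$ children, so the flip is forced regardless of the opinion of the external parent of $R^*$; this is precisely what makes $\mathcal{F}$ measurable with respect to $\xi_0|_{V(T_{R^*})}$.

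For the probability estimate, Claim~\ref{cl:stable_until_probability} gives that each required $(\leq i)$-stability event (for either opinion) has probability at least $2^{-Cik}$, and by Remark~\ref{rm:independancy_general} the $k-1$ such events attached to $v_i$, over all $i$, are mutually independent because they are supported on pairwise disjoint subtrees. Hence
\begin{equation*}
\mathbb{P}(\mathcal{F})\ \geq\ 2^{-O(k)}\prod_{i=1}^{\ell}\bigl(2^{-Cik}\bigr)^{k-1}\ =\ 2^{-\Theta(k^2)\sum_{i=1}^{\ell} i}\ =\ 2^{-\Theta(k^2\ell^2)}.
\end{equation*}
Consequently $k^{h-\ell}\mathbb{P}(\mathcal{F})\geq \exp\bigl((h-\ell)\ln k-\Theta(k^2\ell^2)\bigr)$, and since $\ell=O(\sqrt{h\ln k}/k)=o(h)$ the first term is $(1+o(1))\,h\ln k$. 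Choosing $c_-$ small enough that $\Theta(k^2\ell^2)\leq\tfrac12 h\ln k$ makes this quantity tend to infinity, so whp $\mathcal{F}(R^*)$ holds for at least one $R^*$, producing a not-$(\ell-2)$-stationary vertex; as $\ell-2=\lfloor c_-\sqrt{h\ln k}/k\rfloor$ this proves the lemma.

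\textbf{The main obstacle.} Unlike the binary case, where the off-path branches were required to be strongly stable, an event of constant probability, here the cheapest available guarantee, $(\leq t)$-stability, costs $2^{-\Theta(tk)}$, and the branch hanging at $v_i$ must remain correct all the way up to time $\approx i$. Thus the horizons $t_i\approx i$ grow along the path and the total exponent is $\Theta\!\bigl(k^2\sum_i i\bigr)=\Theta(k^2\ell^2)$ rather than linear in $\ell$. Balancing this against the $\approx k^h$ independent trials is exactly what caps the attainable path length at $\ell=\Theta(\sqrt{h\ln k}/k)$, and hence why only a $\sqrt{D}$-type bound is obtained. The most delicate points are verifying that the propagation is clean at every intermediate time (no premature flips), which forces us to control all $k-1$ off-path children at each $v_i$ and to track parities carefully, and confirming that the $(\leq i)$-stability events, though they have differing horizons, live on genuinely disjoint subtrees so that the independence of Remark~\ref{rm:independancy_general} applies.
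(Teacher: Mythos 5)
Your overall route is the same as the paper's: choose $R^*$ at height $\ell=\Theta(\sqrt{h\ln k}/k)$, build inside $T_{R^*}$ a monotone path along which $-1$ travels one vertex per step, certify the $k-1$ off-path children of each path vertex by $(\leq t)$-stability with prescribed opinions via Claim~\ref{cl:stable_until_probability}, use Remark~\ref{rm:independancy_general} for independence over the disjoint subtrees, and balance the resulting cost $2^{-\Theta(k^2\ell^2)}$ against the $\sim k^{h-\ell}$ independent trials. The quantitative bookkeeping matches the paper's (the paper gets $\mathbb{P}(\xi_0^*\in\mathcal{F})\geq 2^{-(C+2)d^2k^2}$ and concludes identically), up to harmless slips: the bottom seed costs $2^{-k^2}$, not $2^{-O(k)}$, since all $k^2$ grandchildren of the seed must be pinned; you also need to pin the initial opinions along the path itself (the paper's condition A1, cost $2^{-\ell/2}$); and the children of $v_i$ are read at times of parity $i-1$, so the right certificate is $(\leq i-1)$-stability, not $(\leq i)$-stability.

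However, there is a genuine gap at the top vertex. You let $R^*=v_\ell$ itself flip at time $\ell$ and claim the dynamics at $R^*$ is controlled ``regardless of the opinion of the external parent of $R^*$.'' The forced flip at time $\ell$ is indeed parent-independent ($v_{\ell-1}$ plus $k/2$ negative off-path children give $k/2+1\geq\lceil(k+2)/2\rceil$ votes), but the \emph{holding} of $R^*$ at $+1$ before time $\ell$ is not: $R^*$ has $k+1$ neighbours, and with your quota of $k/2$ off-path children stably at $-1$, an external parent that is $-1$ at the relevant odd times already supplies a strict majority $k/2+1$, so $R^*$ can flip prematurely under adverse extensions. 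Then $\xi_{\ell-2}(R^*)=1$ fails for some extensions, the event $\mathcal{F}(R^*)$ no longer implies non-stationarity \emph{for every} extension of $\xi_0|_{V(T_{R^*})}$, and the independence argument over different $R^*$ collapses (a premature flip of $R^*$ can also corrupt the propagation below it). The paper avoids exactly this: its condition A4 requires \emph{all} off-path children of $v_d=R^*$ to be $(\leq d-1)$-stable with opinion $+1$, so that $R^*$ never flips and acts as a buffer insulating the path from the outside, and the witness of non-stability is $v_{d-1}$, which flips at time $d-1$ and hence is not $(d-3)$-stable. Your construction is repaired by adopting this modification (drop the $-1$ quota at the top vertex, stabilise $R^*$ at $+1$, and read off non-stationarity at $v_{\ell-1}$); with that fix the rest of your argument goes through as in the paper.
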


\begin{proof}
Let $d$ be the smallest even number bigger than $\lfloor c_- \cdot \sqrt{h \ln{k}}/k\rfloor + 2$. We will choose the value of the constant $c_-$ at the end of the proof. 

Let $R^* \in V(T)$ be at distance $d$
from a leaf, define $T^* := T_{R^*}$. First, we will define a class $\mathcal{F}$ of functions $\xi_0^*: V(T^*) \rightarrow \{-1, 1\}$ such that for each initial opinion $\xi_0^*$ on $V(T^*)$ from $\mathcal{F}$ there exists not $(d-3)$-stable for every extension of $\xi_0^*$ to the entire $V(T)$. Secondly, we will show that this class is so large that whp, for a uniformly random $\xi_0\in\{-1,1\}^{V(T)}$, there exists $R^*$ as above so that $\xi_0|_{V(T^*)}\in\mathcal{F}$  leading to the fact that 
$\tau\geq$ $d-2$ whp. The latter immediately implies the statement of Lemma~\ref{lm:k_ary_lower} due to the definition of $d$.

\paragraph{Definition of the desired set $\mathcal{F}$.}

The following requirements on $\xi_0^*$ define the set $\mathcal{F}$. There exists a path $v_2v_3\ldots v_{d}$ with $v_d := R^*$ such that for any extension $\xi_0$ to the entire $V(T)$

\begin{itemize}

\item[A1] for every even $t\in[d]$, $\xi_0(v_t) = 1$ holds;

\item[A2] for every $t \in \{3, \ldots, d\}$, all children of $v_t$, other than $v_{t-1}$, are $(\leq t-1)$-stable;

\item[A3] for every $t \in \{3, \ldots, d-1\}$, the number of children $u$ of $v_t$ such that $\xi_{t-1}(u) = -1$ and $u\neq v_{t-1}$ equals $k/2$;

\item[A4] for every child $u$ of $v_d$, other than $v_{d-1}$, $\xi_{d-1}(u) = 1$ holds;

\item[A5] all grandchildren of $v_2$ are leaves with initial opinion $-1$.

\end{itemize}

Let $\xi_0^*\in\mathcal{F}$ and let $\xi_0$ be an extension of $\xi_0^*$ to the entire $V(T)$.
Let us show that the vertex $v_{d-1}$ is not $(d-3)$-stable w.r.t. $\xi_0$. In order to prove that, it is sufficient to show that for every $s \in \{2, \ldots d-1\}$ 
\begin{itemize}
    \item $\xi_s(v_s) = -1$;
    \item $\xi_{s}(v_{i}) = 1$, for every $i \in [d]\setminus[s]$ with the same parity as $s$.
\end{itemize}
Let us prove the latter statement by induction. First, by A2 and A4, $v_d$ has at least $k - 1$ children with opinion $1$ at every odd time step less than $d$. So, $\xi_0(v_d) = \xi_2(v_d) = \ldots = \xi_d(v_d) = 1$. Next, let us note that all children of $v_2$ have opinion $-1$ at time $1$, so $\xi_2(v_2) = -1$.
We will further rely on the following fact: For every time step $t \in [d-1]$ and every $i \in \{\max\{t,3\}, \ldots, d-1\}$ with the same parity as $t$, 
\begin{equation}
\text{
$\xi_t(v_i)=-1$ if and only if either $\xi_{t-1}(v_{i-1}) = -1$ or $\xi_{t-1}(v_{i+1}) = -1$.
}
\label{eq:fact}
\end{equation}
Indeed, (1) by A2, for every neighbour $u$ of $v_i$ other than $v_{i-1}$ and $v_{i+1}$, $\xi_{i-1}(u) = \xi_{t-1}(u)$, and (2) by A3, for exactly $k/2$ neighbours $u$ of $v_i$ other than $v_{i-1}$ and $v_{i+1}$, $\xi_{i-1}(u) = -1$. So, from (1) and (2) we conclude that, for exactly $k/2$ neighbours $u$ of $v_i$ other than $v_{i-1}$ and $v_{i+1}$, $\xi_{t-1}(u) = -1$, implying~\eqref{eq:fact}. 

Now, let us derive both the base of induction ($s=2$) and the induction step from~\eqref{eq:fact}. To prove the base, let us first apply~\eqref{eq:fact} with $t = 1$ and every odd $i \in \{3, \ldots, d-1\}$. Due to A1,  $\xi_0(v_{i-1}) = \xi_0(v_{i+1}) = 1$,  hence $\xi_1(v_i) = 1$. This allows us to apply~\eqref{eq:fact} with $t = 2$ and every even $i \in \{3, \ldots, d-1\}$ and get $\xi_2(v_i) = 1$, concluding the base of induction (we already showed $\xi_2(v_2) = -1$ and $\xi_2(v_d) = 1$). Finally, for the induction step, we assume that $s \in \{3, \ldots, d-1\}$ and that for smaller time steps the statement is true. We apply~\eqref{eq:fact} with $t = s$ and $i = s$ and get $\xi_{s}(v_s) = -1$, since $\xi_{s-1}(v_{s-1}) = -1$. Eventually, we observe that from the induction assumption it follows that $\xi_{s-1}(v_i)=1$ for all $i\in\{s+1,\ldots,d\}$ with the same parity as $s-1$. Thus we may
apply~\eqref{eq:fact} with $t = s$ and every $i \in \{t+2, \ldots, d-1\}$ with the same parity as $t$ and get $\xi_{s}(v_{i}) = 1$. Recalling $\xi_s(v_d)=1$ whenever $s$ is even, we complete the induction step. 

\paragraph{The existence of $R^*$.}

First, let us note that probabilities of the events A1 and A5 are $2^{-d/2}$ and $2^{-k^2}$ respectively. Next, let $U$ be the set of all children of vertices from the path $v_3\ldots v_d$, excluding the vertices of the path and $v_2$. 
 Next, let $I: U \rightarrow \{2, \ldots , d-1\}$ be such that
$$
I(u) = i-1 \quad\text{ if and only if }\quad v_i \text{ is the parent of } u.
$$
Next, consider an arbitrary function $O: U \rightarrow \{-1, 1\}$ such that
\begin{itemize}
    \item $O(u) = 1$ for every $u$ with $I(u) = d-1$;
    \item for every $i \in \{2, \ldots, d-2\}$, the number of vertices $u \in U$ with $I(u) = i$ and $O(u) = -1$ equals $k/2$.
\end{itemize}
Note that such a function exists, since, for every $i \in \{2, \ldots, d-2\}$, the number of vertices $u \in U$ with $I(u) = i$ equals $k-1 > k/2$.

We note that A2, A3, and A4 follow immediately if every vertex $u\in U$ is $(\leq I(u))$-stable and has $\xi_{I(u)}(u) = O(u)$. Next, let us note, that events $\{u \text{ is } (\leq I(u))\text{-stable and } \xi_{I(u)}(u) = O(u)\}$ are independent over $u\in U$ since these events depend only on vectors of initial opinions $\xi_0|_{V(T_u)}$. Therefore, the events A1, A5, and $\{u \text{ is } (\leq I(u))\text{-stable and } \xi_{I(u)}(u) = O(u)\}$, $u \in U$, are independent, as they are defined by initial opinions on disjoint sets of vertices. Therefore, 
$$
\mathbb{P}(\xi_0^* \in \mathcal{F}) \geq 2^{-d/2 - k^2} \prod_{u \in U} \mathbb{P}\biggl(u \text{ is } (\leq I(u))\text{-stable and } \xi_{I(u)}(u) = O(u)\biggr).
$$
Due to Claim~\ref{cl:stable_until_probability},
$$
\mathbb{P}\biggl(u \text{ is } (\leq I(u))\text{-stable and } \xi_{I(u)}(u) = O(u)\biggr) \geq 2^{-CkI(u)}
$$
for every $u\in U$. Thus, we conclude
\begin{equation}
\mathbb{P}(\xi_0^* \in \mathcal{F}) \geq 2^{-d/2 - k^2} \prod_{u \in U} 2^{-CkI(u)} = 2^{-d/2 - k^2} \prod_{i\in\{2, \ldots, d-1\}} (2^{-Cki})^{k-1} \geq 2^{-(C+2)d^2k^2}.
 \label{eq:sec_general_k_pre_final}
 \end{equation}
 In addition, events $\{\xi_0|_{V(T_{R^*})}\in\mathcal{F}=\mathcal{F}(R^*)\}$ are independent over all vertices $R^*$ at distance $d$ from leaves. So, since the number of vertices at distance $d$ from leaves is $(k+1)k^{h-d-1}$, it is sufficient for us to show 
 \begin{equation}
 (k+1)k^{(h-d-1)} \cdot \mathbb{P}(\xi_0^* \in \mathcal{F}) = \Omega(1).
 \label{eq:sec_general_k_final}
 \end{equation}
 Due to~\eqref{eq:sec_general_k_pre_final}, we achieve~\eqref{eq:sec_general_k_final} by choosing small enough constant $c_-$ in the definition of $d$.
\end{proof}

\section{Discussions}
\label{sc:dis}

Two immediate questions remain unanswered. First, what is the exact asymptotics in Theorem~\ref{th:2}? We suspect, for a perfect binary tree $T$ and a uniformly random $\xi_0$, there exists $c\in(1/4,1/3)$ such that $\frac{\tau(T;\xi_0)}{n}\stackrel{P}\to c$. Determining this limit would be of great interest, within the scope of majority dynamics. Second, in Theorem~\ref{th:3}, our methods do not capture the correct dependency on $D$, so we leave that as an open problem. Below, we propose several other directions for future investigation.

\paragraph{Stabilisation time.} In all the processes that we observed on finite graphs, there is always at least one vertex which is stable from the beginning. Is it always the case? More formally, is it true that for every finite graph $G$ and for every vector of initial opinions $\xi_0\in\{-1,1\}^{V(G)}$, there exists a vertex $v\in V(G)$ such that $\xi_{t+2}(v)=\xi_t(v)$ for all $t\in\mathbb{Z}_{\geq 0}$?

Recall that $\tau(G)<|E(G)|$ for every graph $G$. Let $\delta(G)$ be the minimum degree of $G$. It seems plausible that there exists a universal constant $C>0$ such that, for every graph $G$, $\tau(G)\leq C\frac{|E(G)|}{\delta(G)}$. This bound clearly holds for all trees due to Theorem~\ref{th:1}. Note that it cannot be improved in such generality since, in particular, it is achieved by the wheel graph.

\paragraph{Stable partitions.} Let us say that $(U,W)$ is a {\it stable partition} of $V(G)$, if, letting $\xi_0(u)=1$ for $u\in U$ and $\xi_0(w)=-1$ for $w\in W$, we get that all vertices of $G$ are 0-stationary. Observe that any unfriendly partition (including trivial cases $U=\varnothing$ or $W=\varnothing$) is stable. In particular, these are the only stable partitions of a clique. Can one characterise graphs with only these stable partitions? What are the stable partitions of the random graph $G(n,1/2)$?

Clearly, for every $G$ and $\xi_0$, letting $\tau:=\tau(G;\xi_0)$, we get that $\xi_{\tau}$ induces a stable partition. More generally, given graph $G$ and a positive integer $t$, what is the set of partitions $\Pi_t(G)$ achievable by the process at time $t$? How fast does $|\Pi_t(G)|$ decay? Note that there are graphs with exponentially many stable partitions. For instance, identify every vertex of an $n$-vertex cubic graph with a clique of size 5, such that all $n$ cliques are vertex-disjoint. Then, every partition that has a trivial intersection with every 5-clique is stable.

\paragraph{Average-case dynamics.} Here, we assume that the initial configuration $\xi_0$ is uniformly random. Recall that, for (asymptotically) almost all graphs on $n$ vertices~\cite{Fountoulakis} and for perfect trees $G$ (due to Theorem~\ref{th:3}), whp $\tau(G;\xi_0)=O(\log n)$. It seems plausible that there exists a universal constant $C>0$ such that $\mathbb{E}\tau(G;\xi_0)<C\log n$ for every graph $G$. In order to support this conjecture, note that our proofs, in part, rely on the fact that, in perfect trees, distant vertices have low influence on each other's dynamics. This leads naturally to the following question:  For an arbitrary graph $G$, how does the distribution of the final stabilised opinion of $v$, conditioned on $\xi_0(u)=1$, depends on the graph distance between the vertices $v$ and $u$?

In perfect binary trees, exponential decay of this dependency implies that, with high probability, the final stable partition of opinions is almost balanced. How does the final partition behave on other graph families? For instance, if $G$ is planar, may one of the two stabilised opinion classes contain a large connected component?

\section*{Acknowledgements}
The authors are grateful to Elad Tzalik for useful discussions.

\bibliographystyle{abbrv}
\bibliography{reference}

\begin{thebibliography}{10}

\bibitem{Alistarh15}
D.~Alistarh, J.~Aspnes, D.~Eisenstat, R.~Gelashvili, and R.~L. Rivest.
\newblock Time-space trade-offs in population protocols.
\newblock In {\em Proceedings of the Twenty-Eighth Annual {ACM-SIAM} Symposium
  on Discrete Algorithms, {SODA} 2017, Barcelona, Spain, Hotel Porta Fira,
  January 16-19}, pages 2560--2579. {SIAM}, 2017.

\bibitem{BenjaminiEtAl}
I.~Benjamini, S.-O. Chan, R.~O'Donnell, O.~Tamuz, and L.-Y. Tan.
\newblock Convergence, unanimity and disagreement in majority dynamics on
  unimodular graphs and random graphs.
\newblock {\em Stochastic Processes and their Applications}, 126(9):2719--2733,
  2016.

\bibitem{Chakraborti}
D.~Chakraborti, J.~H. Kim, L.~Joonkyung, and T.~Tran.
\newblock Majority dynamics on sparse random graphs.
\newblock {\em Random Structures \& Algorithms}, 63:171--191, 2023.

\bibitem{JMS}
J.~V. de~Jong, J.~C. McLeod, and M.~Steel.
\newblock Neighborhoods of phylogenetic trees: exact and asymptotic counts.
\newblock {\em SIAM J. Discrete Math.}, 30(4):2265--2287, 2016.

\bibitem{Fountoulakis}
N.~Fountoulakis, M.~Kang, and T.~Makai.
\newblock Resolution of a conjecture on majority dynamics: Rapid stabilization
  in dense random graphs.
\newblock {\em Random Structures \& Algorithms}, 57:1134--1156, 2020.

\bibitem{GolesOlivos}
E.~Goles and J.~Olivos.
\newblock Periodic behaviour of generalised threshold functions.
\newblock {\em Discrete Mathematics}, 30:187--189, 1980.

\bibitem{McCulloch90}
W.~S. McCulloch and W.~Pitts.
\newblock A logical calculus of the ideas immanent in nervous activity.
\newblock {\em Bulletin of Mathematical Biology}, 52(1--2):99--115, 1990.

\bibitem{survey}
E.~Mossel and O.~Tamuz.
\newblock Opinion exchange dynamics.
\newblock {\em Probability Surveys}, 14:155--204, 2017.

\bibitem{PoljakTurzik}
S.~Poljak and D.~Turz\'{\i}k.
\newblock On pre-periods of discrete influence systems.
\newblock {\em Discrete Applied Mathematics}, 13(1):33--39, 1986.

\bibitem{TamuzTessler}
O.~Tamuz and R.~J. Tessler.
\newblock Majority dynamics and the retention of information.
\newblock {\em Israel Journal of Mathematics}, 206:483--507, 2015.

\bibitem{Yin19}
X.~Yin, H.~Wang, P.~Yin, and H.~Zhu.
\newblock Agent-based opinion formation modeling in social network: A
  perspective of social psychology.
\newblock {\em Physica A: Statistical Mechanics and its Applications},
  532:121786, 2019.

\bibitem{AAMAS}
A.~N. Zehmakan.
\newblock Random majority opinion diffusion: Stabilization time, absorbing
  states, and influential nodes.
\newblock {\em Proc. of the 22nd International Conference on Autonomous Agents
  and Multiagent Systems (AAMAS 2023)}, pages 2179--2187, 2023.

\end{thebibliography}

\end{document}